\theoremstyle{plain}
\newtheorem{thm}{Theorem}
\newtheorem{lem}{Lemma}
\newtheorem{Bob}{Definition}
\newtheorem{prop}{Proposition}
\theoremstyle{remark}
\newtheorem{rem}{Remark}
\newtheorem{ex}{Example}
\newtheorem{ques}{Question}
\newcommand\R{\mathbb R}
\newcommand\mbc[1]{}
\newcommand{\constant}{\tfrac 1 {12}}
\newcommand{\constantinv}{12}
\newcommand{\fracpartt}[1]{\langle\!\langle#1\rangle\!\rangle}
\newcommand{\ui}{\ensuremath{\mathcal{I}}}
\newcommand{\Leb}{\ensuremath{\lambda}}
\newcommand{\Limsup}{\limsup_{n\to\infty}\limits}
\newtheorem{cor}{Corollary}
\begin{document}
\title{Borel-Cantelli Sequences\\ \tiny{\today}}\
\author[M. Boshernitzan]{Michael Boshernitzan}
\address{Department of Mathematics, Rice University, Houston, TX~77005, USA}
\email{michael@math.rice.edu}
\author[J.\ Chaika]{Jon Chaika}

\address{Department of Mathematics, Rice University, Houston, TX~77005, USA}

\email{Jonathan.M.Chaika@rice.edu}

\maketitle

\begin{abstract}
A sequence $\{ x_{n}\}_1^\infty$ in the unit interval  $[0,1)=\mathbb{R} / \mathbb{Z}$ is called \mbox{{\em Borel-Cantelli}} (BC) if for all
non-increasing sequences of positive real numbers $\{a_n\}$ with
$\underset{i=1}{\overset{\infty}{\sum}}a_i=\infty$\,
\mbox{the set}
\[
\big\{x\in[0,1)\ \big|\ |x-x_n|<a_n \text{ for infinitely many }n\geq1\big\}
\]
has full Lebesgue measure.
(To put it informally, BC sequences are sequences for which a natural converse to the Borel-Cantelli Theorem holds).

The notion of BC sequences is motivated by the Monotone Shrinking Target Property for dynamical systems,
but our approach is from a geometric rather than dynamical perspective. A sufficient condition, 
a necessary condition and a necessary and sufficient condition for a sequence to be BC are established. 
A number of examples of BC and not BC sequences are presented.

The property of a sequence to be BC is a delicate diophantine property.
For example, the orbits of a pseudo-Anosoff IET (interval exchange transformation) are BC while the orbits 
of a ``generic'' IET are not.

The notion of BC sequences is extended to sequences in Ahlfors regular spaces (rather than in $[0,1)$).
\end{abstract}
\section{Set up}
Denote by\, $\ui=[0,1)=\mathbb{R} / \mathbb{Z}$  the unit interval and by $\Leb$ the Lebesgue measure on it.
For $r>0$ and $a\in\ui$, denote by $B(a,r)$ the $r$-ball around  $a$
(taken mod $1$, so that $\Leb(B(c,r))=\min(2r,1)$). For $c \in \mathbb{R}$, let $\fracpartt{c}=c-\big[c\big]\in\ui$
denote the fractional part of $c$ (or $c$ mod $1$).
\vspace{1mm}


By a  {\em standard sequence} we mean a non-increasing sequence\, ${\bf a}=\{a_n\}_1^\infty$
of positive real numbers \mbox{$a_1\geq a_2\geq\cdots\,$} converging to $0$ such that
$\underset{n=1}{\overset{\infty}{\sum}}a_n=\infty$.
\begin{Bob}\label{def:BCui}
A sequence\,  ${\bf x}=\{x_n\}_1^\infty$ in\, $\ui=[0,1)$  is called
Borel-Cantelli (BC)  if\, $\Leb\big(\limsup_{n\to\infty}\limits B(x_n,a_n)\big)=1$
for every standard sequence ${{\bf a}=\{a_n\}_1^\infty}$.
\end{Bob}

Recall that\, $\Limsup B(x_n,a_n)={\underset{k=1}{\overset{\infty}{\cap}}
\underset{n=k}{\overset{\infty}{\cup}} B(x_n, a_n)}$
denotes the set of all points lying in infinitely many $B(x_n,a_n)$.

Observe that if\, $\underset{i=1}{\overset{\infty}{\sum}}a_i<\infty $ then
${\Leb(\Limsup B(x_n, a_n))=0}$
by the Borel-Cantelli Theorem.
Also note that if we fix a dense sequence $\bf{x}$ and let
\[
a_i=
\begin{cases}
\frac15,& \text{if }\ \frac25<x_{i}<\frac35\\
0,& \text{otherwise,}
\end{cases}
\]
then $\underset{i=1}{\overset{\infty}{\sum}}a_i=\infty$\,  but
$\Leb(\Limsup B(x_n, a_n))=\Leb((\frac 1 5,\frac 4 5))<1.$
 This example shows that one must restrict the choice of radii in some way beyond the obvious condition given by the Borel-Cantelli Theorem. Restricting to non-increasing targets is reasonable because it is a mild condition, many sequences are Borel-Cantelli, and it is natural in the context of dynamical systems as seen by the Monotone Shrinking Target Property (MSTP) (see the survey paper \cite{ja}).


\begin{Bob} A $\Leb$ measure preserving map $T \colon \ui \to \ui$ satisfies the
\emph{Monotone Shrinking Target Property} (MSTP) if for  any standard sequence $\bf a$
and any $y \in \ui$
\[
\Leb\big(\Limsup T^{-n}(B(y, a_n))\big)=0.
\]
\end{Bob}

We can consider a dual property. We say a map $T\colon \ui \to \ui$ is Absolutely
Borel-Cantelli (ABC) if the forward orbit $\{T^nx\}_{n\geq0}$ of every point
$x\in \ui$ is BC.
(That is, for any $x\in X$ and any standard sequence
${\bf{a}}=\{a_k\}_1^\infty$ the relation $\Leb(\Limsup B(T^nx, a_n))=1$ holds).

The emphasis in our paper is on abstract sequences, not necessarily originating from
dynamical systems.
 We focus on the Borel-Cantelli property (for sequences) as a version of the ABC
 property (for maps).
Note that we don't have a natural candidate for the notion of MSTP for abstract sequences.

 Approximation of points in a space by sets have also been considered in the context of regular systems \cite{reg sys} and ubiquitous systems \cite{BDV}. Some of our results (sufficient conditions Theorems \ref{suff} and \ref{suff gen}) have been proven more generally in these contexts. In particular, ubiquitous systems considers approximation by sets (instead of just points) and allows for more general targets. This generality leads to much more involved definitions (than BC). At least one natural example of approximation by sets can also be handled by BC sequences (Example \ref{farey} can be thought of as describing approximation of irrationals by rationals based on denominator). In this paper, we provide necessary and sufficient conditions for the conclusions of certain of the results in the context we consider (see Remark \ref{ubiq}). 

The BC (Borel Cantelli) property is quite delicate as the following examples
suggest.

Let $\alpha\in\R$.
If $x_n=\fracpartt{n\alpha}$ then $\bf x$ is BC  if and only if $\alpha$ is badly
approximable irrational, i.\,e. if the terms in its continued fraction expansion are bounded
(see \cite{kurz} and also Example \ref{classic}). If $x_n=\fracpartt{\alpha\log(n)}$ then $\bf x$ is
BC for any $\alpha \neq 0$ (see Example~\ref{log}).
If $x_n=\fracpartt{\alpha\sqrt{\log(n)}}$ then $\bf x$ is never BC
(Corollary~\ref{small sep}).

In this paper we also show that a number of natural sequences are BC.
These include sequences given by some (but not other) independent identically distributed
random variables (see Examples \ref{RV1}, \ref{RV2} and \ref{RV gen}).
The Farey sequence of rationals (taken in the natural order) is also BC.
(This observation recovers a classic theorem of Khinchin on approximation of irrationals by rationals,
see Example~\ref{khinch}).
Additionally, $x_n=\fracpartt{\sqrt{n}}$  is BC by the results in \cite{EM}
concerning the distribution of gaps of this sequence and $x_n= \fracpartt{n^2\alpha}$ is BC for almost every $\alpha$ due to weaker results on gaps in \cite{RS} (see Remark \ref{gaps}). On the other hand, the same sequence $x_n=\fracpartt{n^2\alpha}$ fails to be BC for a residual set
of $\alpha$, in particular for all $\alpha$  satisfying \
$\inf_{n\geq1}\limits\fracpartt{\alpha n^4}=0$.

 We conjecture, and some computer computations suggest, that a large class of sequences like $\{\fracpartt{\sqrt[3]{n} }\}$, $\{\fracpartt{n \log(n)}\}$ and $\{\fracpartt{(\log(n))^2}\}$ are BC; however, we are lacking the rigorous methods to validate this conjecture.

In the context of dynamical systems, systems satisfying a mild quantitative rigidity
condition have almost every orbit not BC (see Corollary \ref{quant rig}).
These conditions show that for almost every IET $T$ almost all orbits are not BC
(see \mbox{\cite[Theorem 7]{kurz iet}}).

On the other hand, linearly recurrent systems are ABC \mbox{(Example~\ref{lin rec bc})}.
The result implies that some exceptional IETs (like the pseudo-Anosoff, or self-similar
ones) are ABC  (see Example~\ref{lin rec IET}).  In particular, all minimal IETs over
quadratic number fields are ABC  (because these reduce to a pseudo-Anosoff IET on
a subinterval by \cite[Proposition 1]{BoshCar}).

The main results of the paper are:
\begin{enumerate}
\item A frequently checkable sufficient condition for a sequence to be BC (Theorems \ref{suff} and \ref{suff gen}).
\item A frequently checkable necessary condition (Theorems \ref{nec} and \ref{nec gen}), which is phrased
      as sufficient condition for a sequence not to be BC.
\item A necessary and sufficient condition (Theorems \ref{nec suff} and \ref{nec suff gen}).
\end{enumerate}

 The first two conditions and their corollaries help determine whether or not many sequences are Borel-Cantelli. The last condition provides some properties of BC sequences and identifies the properties that govern whether or not a sequence is BC (see Remark \ref{ybc}). These results are proven for $\ui$ and then generalized to Ahlfors regular spaces (Section 3). The methods in this paper are robust and can be applied to other related situations (see Remark \ref{other} and Section 4).

The plan for this paper is to address first the results for sequences $[0,1)$
(which are most developed in the dynamical side of the literature) in the second section.
The Borel-Cantelli status of many natural sequences is addressed in this section.
In the  third section we generalize these results (from the unit interval) to Ahlfors regular spaces.
We generalize these results to some weaker properties in the fourth section.
Then we present some classification results in the fifth section.
The main tools of this paper are density point arguments and covering arguments.
Throughout this paper constants are found, though they are not optimal.

\section{[0,1) and Lebesgue Measure}
The following theorem provides a checkable sufficient condition for a sequence to be BC (Borel-Cantelli).
The condition will also be used in the proof of Theorem \ref{nec suff}, the necessary and sufficient
condition for a sequence to be BC.
\begin{thm} \label {suff}
Let ${\bf{x}}=\{x_n\}_1^\infty$ be a sequence in the unit interval $\ui$ and assume that
there exists $d>0$ such that \
$\underset{N \to \infty}{\liminf}\,\Leb(\underset{i=1}{\overset{N}{\cup}} B(x_i, \frac 1 N) \cap J) \geq d \Leb(J)$
for all intervals $J$.  Then $\bf x$ is BC.
\end{thm}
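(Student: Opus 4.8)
The plan is to fix an arbitrary standard sequence ${\bf a}=\{a_n\}$ and prove $\Leb(\Limsup B(x_n,a_n))=1$. Writing $E_k=\bigcup_{n\ge k}B(x_n,a_n)$, the limsup set is the decreasing intersection $\bigcap_{k\ge1}E_k$, so $\Leb(\Limsup B(x_n,a_n)\cap J)=\lim_k\Leb(E_k\cap J)$ for every interval $J$. I would therefore reduce everything to the following uniform lower density estimate: there is $\theta>0$, depending only on $d$, with $\Leb(E_k\cap J)\ge\theta\,\Leb(J)$ for every $k\ge1$ and every interval $J$. Granting this, the intersection $\Limsup B(x_n,a_n)$ also has density at least $\theta$ in every interval, and the Lebesgue density theorem then forces full measure: a positive-measure complement would contain a density point of itself, where the density of the limsup set tends to $0$, contradicting the uniform lower bound on small balls. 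Thus $\bf x$ is BC, and the whole difficulty is concentrated in the density estimate, which is where the hypothesis is used.

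The estimate is immediate in the regime where $a_N\ge 1/N$ for arbitrarily large $N$ (in particular whenever $\limsup_N N a_N=\infty$). For such an $N\ge k$ and each $i\le N$ we have $B(x_i,a_i)\supseteq B(x_i,a_N)\supseteq B(x_i,1/N)$, so $E_k\cap J$ contains $\bigcup_{i=1}^N B(x_i,1/N)\cap J$ except for the indices $i<k$, whose balls have total measure at most $2(k-1)/N\to0$. Applying the hypothesis along such $N$ gives $\Leb(E_k\cap J)\ge (d-o(1))\Leb(J)$, so $\theta=d$ works here.

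The genuine difficulty is the opposite regime $N a_N\to0$ (radii far smaller than $1/N$, e.g.\ $a_n=1/(n\log n)$), where no single scale suffices: the balls available up to index $N$ have radius $a_N\ll 1/N$, and even using all of $x_1,\dots,x_N$ their total mass is $\sim N a_N\to0$. Here I would exploit that the hypothesis produces, at every scale, many \emph{separated} centres. Concretely, from $\Leb(\bigcup_{i\le N}B(x_i,1/N)\cap J)\ge d\,\Leb(J)$ a Vitali selection extracts a $2/N$-separated subfamily of $\gtrsim dN\Leb(J)$ of the centres $x_i$ whose $3/N$-balls still cover a definite fraction of $J$; such a subfamily cannot cluster. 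I would then combine these separated families along a rapidly increasing sequence of scales and use the divergence $\sum_n a_n=\infty$ to guarantee that the union of the associated balls $B(x_i,a_i)$ covers a fixed fraction $\theta$ of $J$, the separation being what controls the overlaps between the selected balls.

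I expect the main obstacle to be exactly this last step in the small-radius regime: converting the divergence of $\sum a_n$ into genuine \emph{coverage} of a fixed proportion of $J$, rather than merely into a large total mass. A naive second-moment (Chung--Erd\H{o}s) argument applied to all the balls $B(x_n,a_n)$ fails, because the covering hypothesis bounds the covered measure from below but says nothing about clustering of the centres, so the pairwise-intersection sum is uncontrolled; passing to separated subfamilies at each scale is precisely what is needed both to tame the second moment and to bound the cross-scale overlaps. Once the uniform estimate $\Leb(E_k\cap J)\ge\theta\,\Leb(J)$ is secured in both regimes, the reduction in the first paragraph completes the proof that $\bf x$ is BC.
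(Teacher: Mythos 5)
Your overall architecture is the same as the paper's: reduce the theorem to a uniform lower bound $\Leb(E_k\cap J)\ge\theta\,\Leb(J)$ for the tail unions, deduce that the limsup set has density bounded below at every point, and conclude full measure via the Lebesgue density theorem (this is exactly how the paper derives Theorem \ref{suff} from Corollary \ref{link2}). Your treatment of the easy regime ($a_N\ge 1/N$ along a subsequence) is correct, and your plan for the hard regime --- extract at each scale $N$ a $\tfrac1N$-separated subfamily of $\gtrsim dN\Leb(J)$ centres and accumulate their $B(x_i,a_i)$-balls across a rapidly growing sequence of scales --- is the right idea and is the content of the paper's Lemma \ref{link} feeding into Lemma \ref{key}.

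The problem is that the decisive step is not actually carried out: you state that separation ``controls the overlaps'' and that divergence of $\sum a_n$ should yield coverage of a fixed fraction of $J$, and you yourself flag this as the main obstacle, but no argument is given, and a second-moment bound is not the mechanism that works here. What is missing is the quantitative accounting of the paper's key lemma (Lemma \ref{key}): truncate the radii to $b_i=\min\{a_{M^j},\tfrac{e}{2M^j}\}$ on the block $M^{j-1}\le i<M^j$, so that the balls about the $\tfrac{e}{M^j}$-separated centres in that block are pairwise disjoint; note that a set of measure less than the target $\delta$ can meet at most about $\delta M^j/e$ of these disjoint balls, so as long as the measure accumulated through block $j-1$ is still below $\delta$, a definite positive proportion of the block-$j$ balls is entirely new and contributes fresh measure at least $(\tfrac c2-\tfrac1M)\sum_{i=M^{j-1}}^{M^j}b_i$; finally, Cauchy condensation (Lemma \ref{simple}) shows that $\sum_j\sum_{M^{j-1}\le i<M^j}b_i$ diverges, so the accumulated measure must exceed $\delta$ no matter at which index $k_0$ the tail starts, giving the uniformity in $k$ that your reduction requires. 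Without this block-by-block ``new mass'' estimate (or an equivalent), the divergence of $\sum a_n$ only gives large total mass, not coverage, which is precisely the failure mode you identify but do not resolve; so as written the proposal is a correct strategy with the central lemma left unproved.
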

The proof will follow Corollary \ref{link2}.
\begin{rem} This result is analogous to results for regular systems by 
V.\,Beres\-ne\-vich~\cite{ber russian}. 
We include the proofs for completeness.
\end{rem}
\begin{ex} \label{RV1} If $\{R_{n}\}$ is a sequence of independent random variables,
all distributed according to a probability measure $\mu$ that has Radon-Nikodym
derivative bounded away from $0$, then for $\mu^{\mathbb{N}}$ almost every $\zeta$
the sequence $\{R_n(\zeta)\}$ is BC.
(See also Example \ref{RV2} for a more precise result).

It is classical and not hard to show that for any particular sequence of
positive reals ${\bf a}=\{a_n\}$ (not necessarily monotone) with
$\sum_1^\infty a_{n}=\infty$ almost every sequence $\{R_n(\zeta)\}$ satisfies
$\Leb\big(\Limsup B(R_n(\zeta),a_n)\big)=1$.
Theorem \ref{suff} claims that a full measure set works {\em simultaneously}
for all standard sequences.
\end{ex}
\begin{cor} \label{dense}
If there exists $D>0$ such that the sets\, ${\bf X_n}=\{x_k\mid 1\leq k\leq n\}$\,
are $\frac D n$-dense in $\ui$ for all large enough $n$ then the sequence $\bf x$ is BC.
\end{cor}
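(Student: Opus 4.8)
The plan is to derive Corollary \ref{dense} as a direct consequence of Theorem \ref{suff} by verifying that the $\frac{D}{n}$-density hypothesis implies the covering lower bound $\liminf_{N\to\infty}\Leb(\bigcup_{i=1}^N B(x_i,\tfrac1N)\cap J)\geq d\,\Leb(J)$ for some fixed $d>0$ and all intervals $J$. First I would fix an interval $J$ and a large $N$. By hypothesis, for $N$ large enough the set ${\bf X}_N=\{x_1,\dots,x_N\}$ is $\frac{D}{N}$-dense in $\ui$, meaning every point of $\ui$ lies within $\frac{D}{N}$ of some $x_i$ with $i\leq N$. The goal is to show the union of the balls $B(x_i,\tfrac1N)$ covers a definite proportion of $J$.

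The key geometric observation is that $\frac{D}{N}$-density forces the points $x_1,\dots,x_N$ to be spread through $J$ with gaps of size at most $\frac{2D}{N}$, so we can select a subcollection of these points inside (or near) $J$ that are roughly $\frac{1}{N}$-separated yet still reach across all of $J$. Concretely, I would tile $J$ by consecutive subintervals of length comparable to $\frac{1}{N}$; in each such subinterval the density assumption guarantees at least one point $x_i$ with $i\leq N$ lying within $\frac{D}{N}$ of its center, and the ball $B(x_i,\tfrac1N)$ of radius $\frac1N$ then covers a fixed fraction of that subinterval. Summing over the subintervals comprising $J$ yields a lower bound of the form $d\,\Leb(J)$, where $d$ depends only on $D$ and not on $J$ or $N$. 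A small amount of care is needed at the two ends of $J$ (boundary subintervals of length less than $\frac1N$ contribute a bounded error), but since the estimate is a fixed-proportion bound this does not affect the conclusion; once $N$ is large relative to the geometry this error is negligible compared to $d\,\Leb(J)$ whenever $\Leb(J)$ is bounded below, and the short-interval case can be absorbed into the constant.

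The main obstacle is bookkeeping the constant $d$ uniformly in $J$, in particular handling intervals $J$ that are short relative to $\frac1N$. For such $J$ the tiling argument degenerates, but there the single ball $B(x_i,\tfrac1N)$ around a point $x_i$ guaranteed by density (within $\frac{D}{N}$ of $J$) already has radius $\frac1N\geq\Leb(J)$, so it covers a definite fraction of $J$ outright; thus the bound holds with a possibly different but still universal constant. Taking $d$ to be the minimum of the constants arising in the long-interval and short-interval regimes gives a single $d>0$ valid for all $J$ and all sufficiently large $N$, which in turn gives the $\liminf$ bound. Theorem \ref{suff} then applies verbatim to conclude that $\bf x$ is BC.
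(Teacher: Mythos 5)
Your derivation from Theorem \ref{suff} is correct and is essentially the argument the paper intends (the corollary is stated there without a written proof, as an immediate consequence of Theorem \ref{suff}). The one adjustment: the tiles of $J$ should have length on the order of $\tfrac{2D}{N}$ rather than $\tfrac1N$, so that $\tfrac{D}{N}$-density guarantees each tile actually \emph{contains} some $x_i$ with $i\le N$ (a point merely within $\tfrac{D}{N}$ of the center of a $\tfrac1N$-tile could miss it entirely when $D>1$), whence each disjoint tile carries measure at least $\tfrac1N$ of $\bigcup_{i\le N}B(x_i,\tfrac1N)$ and $d$ can be taken of order $\tfrac1D$; the short-interval regime you worry about is vacuous, since $J$ is fixed before $N\to\infty$ in the $\liminf$.
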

\begin{ex}\label{log} It follows that if $x_i=\fracpartt{\log_c(i)}$ then
   the sequence  $\bf x$ is BC because the sets  $\bf X_n$  are roughly $\frac 1 {nc \ln c}$ dense.
\end{ex}
\begin{ex}\label{lin rec IET}
It follows from Corollary \ref{dense} that linear  recurrent IETs are ABC, i.\,e. that
every forward orbit is BC. An interval exchange transformation is called linearly
recurrent if its symbolic coding is a linearly recurrent subshift (see \cite{lin ref}
for introduction and basic properties of linearly recurrent subshifts).
\end{ex}
\begin{cor} \label{farey} If $\bf x$ is uniformly distributed and there exists $c$
such that ${|x_n-x_m|> \frac c {\max \{n,m\}}}$ then $\bf x$ is BC.
\end{cor}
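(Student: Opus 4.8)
The plan is to deduce this from Theorem~\ref{suff}, so it suffices to exhibit a single constant $d>0$ for which
\[
\liminf_{N\to\infty}\Leb\Big(\bigcup_{i=1}^N B(x_i,\tfrac1N)\cap J\Big)\ \geq\ d\,\Leb(J)
\]
holds for every interval $J$. Fix such a $J$ and write $\ell=\Leb(J)$. The separation hypothesis $|x_n-x_m|>\tfrac{c}{\max\{n,m\}}$ is strongest for small indices, but almost all of the indices $i\leq N$ are comparable to $N$, where the separation degrades to about $\tfrac cN$, which is precisely the scale of the balls $B(x_i,\tfrac1N)$. This scale-matching is exactly what we want to exploit. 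Concretely, I would restrict attention to the top half of the indices, setting $S=\{\,i:\ \tfrac N2\leq i\leq N,\ x_i\in J\,\}$: for any two distinct $i,j\in S$ we have $\max\{i,j\}\leq N$, hence $|x_i-x_j|>\tfrac cN$, so the centers $\{x_i: i\in S\}$ form a $\tfrac cN$-separated subset of $J$.

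Next I would count these centers and bound the union. Since $\bf x$ is uniformly distributed and intervals are continuity sets, $\tfrac1N\#\{i\leq N: x_i\in J\}\to\ell$ and $\tfrac{2}{N}\#\{i\leq N/2: x_i\in J\}\to\ell$, whence $\#S=\tfrac\ell2\,N+o(N)$. For a finite $\tfrac cN$-separated set of centers, sorting them and adding balls one at a time shows that each successive radius-$\tfrac1N$ ball contributes at least $\min(\tfrac2N,\tfrac cN)=\tfrac{\min(2,c)}{N}$ of new measure, so the full union has measure at least $\#S\cdot\tfrac{\min(2,c)}{N}$. Every center lies in $J$, so the union is contained in the $\tfrac1N$-neighborhood of $J$ and can lose at most $\tfrac2N$ upon intersecting with $J$. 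Combining these estimates gives
\[
\Leb\Big(\bigcup_{i=1}^N B(x_i,\tfrac1N)\cap J\Big)\ \geq\ \big(\tfrac\ell2\,N+o(N)\big)\tfrac{\min(2,c)}{N}-\tfrac2N .
\]
Letting $N\to\infty$ yields $\liminf\geq\tfrac{\min(2,c)}{2}\,\ell$, so the hypothesis of Theorem~\ref{suff} holds with $d=\tfrac{\min(2,c)}{2}$, a constant independent of $J$, and the corollary follows.

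I expect the one genuine subtlety to be the mismatch between the index-dependent separation and the uniform ball radius $\tfrac1N$; passing to the top half of indices is the device that converts the separation hypothesis into a clean uniform bound $\tfrac cN$ at the ball scale. The remaining points—namely the boundary loss of $O(\tfrac1N)$ when intersecting with $J$ and the fact that the uniform-distribution count only kicks in for $N$ large relative to $J$—are harmless because they wash out in the limit $N\to\infty$ that defines the $\liminf$, and because the resulting $d$ does not depend on $J$.
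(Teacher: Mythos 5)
Your proposal is correct: you verify the hypothesis of Theorem~\ref{suff} exactly as the paper intends (the paper states Corollary~\ref{farey} without written proof, leaving it as a consequence of Theorem~\ref{suff}), and your device of restricting to indices in $[N/2,N]$ cleanly converts the index-dependent separation into a uniform $\tfrac{c}{N}$-separation at the ball scale, after which the counting from uniform distribution and the ball-by-ball measure estimate give $d=\tfrac{\min(2,c)}{2}$ independently of $J$. The boundary losses you flag are indeed $O(\tfrac1N)$ and vanish in the $\liminf$, so there is no gap.
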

\begin{ex} \label{khinch} Define the Farey sequence by the rational numbers in $\ui$
arranged in the following order
\[
 0,1,\tfrac12,\tfrac13,\tfrac23,\tfrac14,\tfrac34,\tfrac15,\tfrac25,\tfrac35,\tfrac45,
 \tfrac16,\tfrac56,\cdots
\]
Corollary \ref{farey} implies that the Farey sequence is BC. (Note that $\frac p q$ is the O$(q^2)$-th term in this sequence).
The fact that  Farey sequence is BC easily implies Khinchin's classic theorem which states that\\
\[
\Leb\Big(\Big\{\alpha\in\ui: \Big|\alpha-\frac p q\Big|<\frac{a_q}{q} \text{ for infinitely many } q\Big\}\Big)=1
\]
\\
for any standard sequence ${\bf{a}}=\{a_{k}\}$
(see e.\,g.\ \cite[Theorem~32]{best}  for a slightly weaker result).
The reduction is based on the observation that
$\underset{i=1}{\overset{\infty}{\sum}}ia_i=\infty$ if and only if
$\underset{i=1}{\overset{\infty}{\sum}}a_{\lfloor \sqrt{i} \rfloor}=\infty$.
Indeed,
$
\underset{i=1}{\overset{\infty}{\sum}} M^{2(i-1)}a_{M^i}\leq \underset{i=1}{\overset{\infty}{\sum}}ia_i\leq \underset{i=0}{\overset{\infty}{\sum}} M^{2(i+1)}a_{M^{i}},
$
and, on the other hand,
$\underset{i=1}{\overset{\infty}{\sum}} M^{2(i-1)}a_{M^i}\leq \underset{i=1}{\overset{\infty}{\sum}}a_{\lfloor \sqrt{i} \rfloor}\leq \underset{i=0}{\overset{\infty}{\sum}} M^{2(i+1)}a_{M^{i}}.
$
\end{ex}


The following is a checkable necessary condition for a sequence to be BC
(sufficient condition for a sequence not to be BC).
It is a \emph{partial} converse to Theorem~\ref{suff}.\\
\begin{thm}\label{nec}
If there exists an interval $J$ such that for every $\epsilon>0$ there exists arbitrarily large
$N_{\epsilon}$ with \ $\Leb(\underset{i=1}{\overset{N_{\epsilon}}{\cup}}
B(x_i,\frac 1 {N_{\epsilon}}) \cap J)<\epsilon\Leb (J)$ \
then $\bf x$ is not BC.
\label{nec}
\end{thm}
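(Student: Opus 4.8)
The plan is to turn the hypothesis into a single standard sequence $\bf a$ for which $\Limsup B(x_n,a_n)$ misses almost all of $J$, and hence cannot have full measure. The radii $a_n$ will be taken constant on consecutive blocks of indices, with the block endpoints chosen to be the times $N$ at which the covering of $J$ by balls of radius $1/N$ is extremely thin.

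First I would fix the decay rate $\epsilon_k=2^{-k}$. Since the hypothesis provides, for each $\epsilon$, arbitrarily large $N_\epsilon$ with $\Leb\big(\bigcup_{i=1}^{N_\epsilon}B(x_i,\tfrac1{N_\epsilon})\cap J\big)<\epsilon\,\Leb(J)$, I can select inductively an increasing sequence $0=N_0<N_1<N_2<\cdots$ with $N_k>2N_{k-1}$ and
\[
\Leb\Big(\underset{i=1}{\overset{N_k}{\cup}}B\big(x_i,\tfrac1{N_k}\big)\cap J\Big)<\epsilon_k\,\Leb(J)\qquad\text{for all }k.
\]
I then set $a_n=\tfrac1{N_k}$ for $N_{k-1}<n\leq N_k$. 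This $\bf a$ is standard: it is positive, it tends to $0$, it is non-increasing (constant within each block and dropping from $\tfrac1{N_k}$ to $\tfrac1{N_{k+1}}$ at each boundary), and $\sum_n a_n=\sum_k (N_k-N_{k-1})/N_k\geq\sum_k\tfrac12=\infty$ thanks to the doubling $N_k>2N_{k-1}$.

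The key estimate is that the balls attached to a single block cover little of $J$. For $n$ in the $k$-th block we have $n\leq N_k$ and $a_n=\tfrac1{N_k}$, so $B(x_n,a_n)\subseteq\bigcup_{i=1}^{N_k}B(x_i,\tfrac1{N_k})$, and therefore the union of these balls meets $J$ in a set of measure $<\epsilon_k\Leb(J)$. Summing the tail over blocks $j\geq k$ gives
\[
\Leb\Big(\underset{n>N_{k-1}}{\cup}B(x_n,a_n)\cap J\Big)\leq\sum_{j\geq k}\epsilon_j\,\Leb(J)=2^{-k+1}\Leb(J).
\]
Because $\Limsup B(x_n,a_n)\cap J\subseteq\bigcup_{n>N_{k-1}}B(x_n,a_n)\cap J$ for every $k$, letting $k\to\infty$ forces $\Leb\big(\Limsup B(x_n,a_n)\cap J\big)=0$. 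Thus the limsup set omits all but a null subset of $J$, so $\Leb\big(\Limsup B(x_n,a_n)\big)\leq 1-\Leb(J)<1$, and $\bf x$ is not BC.

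Once the block structure is in place the argument is direct, so I do not expect a serious obstacle; the only point requiring care is securing all three features of $\bf a$ at once — monotonicity, divergence of $\sum a_n$, and thin per-block coverings — and these are all guaranteed by the freedom to choose each $N_k$ arbitrarily large relative to $N_{k-1}$.
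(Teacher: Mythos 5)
Your proof is correct and follows essentially the same route as the paper: build a standard sequence that is constant equal to $\frac{1}{N_k}$ on blocks ending at the thin-covering times $N_k$, with $N_k>2N_{k-1}$ guaranteeing divergence, and then show the limsup set meets $J$ in a null set. Your final step—bounding the tail by the sum of the measures of the block unions—is in fact a slightly more careful rendering of the paper's one-line appeal to the Borel--Cantelli theorem, since the hypothesis controls the measure of the unions rather than the sum $\sum_n\Leb(B(x_n,a_n)\cap J)$ over individual balls.
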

\begin{proof}
We can choose $N_{2^{-i}}< N_{2^{-(i+1)}}$. Let 
\[
a_i= \tfrac{1}{N_{2^{-i}}} \ \text{ for all } \ N_{2^{-(i+1)}}\leq i<N_{2^{-i}}.
\]
$\bf a$ is a standard sequence (because it is non-increasing and there are $N_{2^{-i}}-N_{2^{-(i+1)}}$ terms of size $\frac 1 {N_{2^{-i}}}$.) 
By hypothesis there exists $J$ such that \mbox{$\underset{n=1}{\overset{\infty}{\sum}} \Leb(B(x_n,a_n)\cap J)<\infty$}. 
Therefore, by the Borel-Cantelli Theorem, $\Leb(\Limsup B(x_n,a_n)\cap J)=0$.
\end{proof}
From this we get the following general result for dynamical systems with a mild quantitative rigidity assumption.
\begin{cor} \label{quant rig} If $T\colon [0,1) \to [0,1)$ is $\Leb$ measure preserving and assume that
${\underset{n \to \infty}{\liminf} \, n \int_0 ^1 |T^nx-x| dx=0}$ then almost every forward orbit $\{T^n(x)\}$ is not BC
(i.\,e.,  the sequence $\{T^n(x)\}$ is not BC for almost all $x\in\ui$).
\end{cor}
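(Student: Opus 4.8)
The plan is to reduce the statement to Theorem~\ref{nec}, applied with the full interval $J=\ui$. Writing $C_N(x)=\Leb\big(\bigcup_{i=1}^N B(T^i x,\tfrac1N)\big)$, the hypothesis of Theorem~\ref{nec} (where $\Leb(J)=1$) holds for a given $x$ precisely when $\liminf_{N\to\infty} C_N(x)=0$. Thus it suffices to prove $\liminf_{N\to\infty} C_N(x)=0$ for a.e.\ $x$, and I would obtain this from an averaged statement via Fatou's lemma: if I can exhibit a sequence $N_m\to\infty$ with $\int_0^1 C_{N_m}(x)\,dx\to 0$, then $\int_0^1 \liminf_m C_{N_m}(x)\,dx\le \liminf_m \int_0^1 C_{N_m}(x)\,dx=0$, which forces $\liminf_m C_{N_m}(x)=0$ for a.e.\ $x$.

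So the heart of the argument is to bound the averaged cover $\int_0^1 C_N\,dx$ for a well-chosen $N$. I would fix a value $n$ taken from a subsequence along which $\delta_n:=n\int_0^1 |T^n x-x|\,dx\to 0$ (such a subsequence exists by hypothesis), and set $N=mn$. I then split the orbit segment $\{T^i x: 1\le i\le N\}$ into the $n$ residue classes mod $n$; class $j$ consists of the $m$ points $T^{j+\ell n}x$, $0\le \ell<m$, which a near-return $T^n\approx\mathrm{id}$ keeps clustered around $T^j x$. Letting $D_j(x)=\max_\ell |T^{j+\ell n}x-T^j x|$ (circle distance), the $j$-th cluster of balls lies in $B(T^j x, D_j+1/N)$, so its measure is at most $\min(2m/N,\,2D_j+2/N)$; summing over the $n$ classes gives $C_N(x)\le 2/m+2\sum_j D_j(x)$. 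Telescoping along each class and using that $T^{j+\ell n}$ preserves $\Leb$ yields $\int_0^1 D_j\,dx\le (m-1)\int_0^1 |T^n x-x|\,dx=(m-1)\delta_n/n$, and hence $\int_0^1 C_N\,dx\le \tfrac2m+2m\delta_n$.

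To finish, for each $m$ I would choose $n$ from the subsequence with $\delta_n$ small (say $\delta_n<1/m^2$) and $n$ large, so that $N_m:=mn\to\infty$ while $\int_0^1 C_{N_m}\,dx\le 2/m+2/m\to 0$; Fatou's lemma and Theorem~\ref{nec} then complete the proof. I expect the main obstacle to be precisely the estimation of $\int_0^1 C_N\,dx$: one must resist bounding $C_N$ pointwise by a union bound over the $N$ time-steps, since controlling each near-return $|T^{j+\ell n}x-T^j x|$ individually through Markov's inequality would cost a factor forcing $n^2\int_0^1|T^n x-x|\,dx\to 0$, whereas only $n\int_0^1|T^n x-x|\,dx\to 0$ is available. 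Working instead with the averaged cover and moving the measure-preservation inside the integral (the telescoping step) is what keeps the error term linear in $\delta_n$ and lets the clustering gain $2/m$ survive.
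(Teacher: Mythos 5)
Your proof is correct, and it reaches Theorem \ref{nec} by a genuinely different route than the paper. The paper's proof uses the same underlying mechanism --- cluster the first $N=in_i$ orbit points into the $n_i$ residue classes modulo a near-period $n_i$ and telescope, using measure preservation to reduce each increment to $\int_0^1|T^{n_i}x-x|\,dx$ --- but it runs the estimate pointwise: it chooses $n_i$ with $\int_0^1|T^{n_i}x-x|\,dx<20^{-i}/n_i$, applies Markov's inequality to each near-return $|T^{kn_i+j}x-T^jx|$, and invokes the Borel--Cantelli lemma over $i$ to get the clustering for a.e.\ $x$ along that rapidly thinning subsequence. You instead bound the averaged cover $\int_0^1 C_N\,dx$ and only pass to an almost-everywhere statement at the very end via Fatou. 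Your version buys two things: it uses the hypothesis at face value (any $n$ with $n\int_0^1|T^nx-x|\,dx<1/m^2$ works; no forced $20^{-i}$ decay), and it cleanly sidesteps the issue you flag --- the paper's Markov/union-bound step is written for a single residue $j$, and controlling all $n_i$ residues simultaneously by a pointwise union bound would cost an extra factor of $n_i$, so one must either count the exceptional residues on average or integrate first as you do. What the paper's pointwise version buys in exchange is an explicit a.e.\ clustering statement (for a.e.\ $x$ and all large $i$ the first $in_i$ points lie in $n_i$ short intervals), whereas you obtain only $\liminf_m C_{N_m}(x)=0$ a.e.; but that is precisely what Theorem \ref{nec} with $J=\ui$ requires, so nothing is lost.
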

\begin{proof} Choose $n_i$ such that $\int_0 ^1 |T^{n_i}x-x| dx<\frac {20^{-i}} {n_i}$. Observe that for each $j$
$\Leb(\{x:|T^{n_i+j}(x)-T^j(x)|>\frac1{2^in_i}\})<10^{-i}$. Thus $\Leb( \{x: |T^{kn_i+j}(x)-T^j(x)|>\frac k {2^in_i} \text{ for each } 1\leq k\leq i\})< i10^{-i}$. Therefeore, the Borel-Cantelli Theorem implies that for almost every $x$ the sequence $\{T^n(x)\}$ satisfies the condition of 
Theorem~\ref{nec}. 
\end{proof}
It follows from \cite[Part I, Theorem 1.4]{metric} that for almost every interval exchange transformation and almost every $x$ the sequence $\{T^n(x)\}$ is not BC. (One gets that the conditions of the corollary holds on sets of increasing positive measure that cover Lebesgue almost all of the interval.) One can tweak the argument to get that in this case \emph{every} orbit is not BC. See \cite[Theorem 7]{kurz iet} for the details.

\begin{ex}\label{classic} An immediate consequence of Corollaries  \ref{dense} and \ref{quant rig} is that $\{\fracpartt{n\alpha}\}$
is BC if and only if
the real $\alpha$ is a badly approximable irrational (that is, the terms of its continued fraction expansion are uniformly bounded).
The above claim is a restatement of a result originally proven by J.\ Kurzwiel in \cite{kurz}.
\end{ex}
\begin{cor}\label{small sep} If  ${\bf x}=\{x_n\}$ is a sequence in $\ui$ such that\,
$\underset{n \to \infty}{\limsup} \ nd(x_n,x_{n+1})=0$ then $\bf{x}$ is not BC.
\end{cor}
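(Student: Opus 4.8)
The plan is to deduce this from the necessary condition of Theorem~\ref{nec}: I will show that the hypothesis $\limsup_{n\to\infty} n\, d(x_n,x_{n+1})=0$ forces $\Leb\big(\bigcup_{i=1}^N B(x_i,\tfrac1N)\big)\to 0$ as $N\to\infty$, so that the condition of Theorem~\ref{nec} is satisfied with $J=\ui$ (taking $\lambda(J)=1$, the required bound is $\epsilon$; the intersection with any fixed interval is no larger than the whole union, so this choice loses nothing). Since $n\,d(x_n,x_{n+1})\geq0$, the hypothesis is equivalent to $\lim_{n\to\infty} n\, d(x_n,x_{n+1})=0$, so for each $\delta>0$ there is an $n_0$ with $d(x_i,x_{i+1})<\delta/i$ for all $i\geq n_0$.

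The key idea is that once consecutive points are closer than the common radius $\tfrac1N$, the corresponding balls overlap and chain together into a single short arc. Fix $\delta>0$ and a large $N$ with $\delta N\geq n_0$, and split the indices at $m=\lceil \delta N\rceil$. For the initial block $1\leq i\leq m$ there are at most $\delta N+1$ balls, each of measure $\leq \tfrac2N$, so by subadditivity this block contributes at most $2\delta+\tfrac2N$. For the tail $m<i\leq N$ we have $i>\delta N\geq n_0$, hence $d(x_i,x_{i+1})<\delta/i<\tfrac1N$, so consecutive balls $B(x_i,\tfrac1N)$ and $B(x_{i+1},\tfrac1N)$ overlap. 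Consequently $\bigcup_{i=m+1}^N B(x_i,\tfrac1N)$ is contained in the $\tfrac1N$-neighborhood of the set $P=\bigcup_{i=m+1}^{N-1}[x_i,x_{i+1}]$ (shortest arcs), which is connected as a chain of overlapping arcs and has length at most $\sum_{i=m+1}^{N-1}\delta/i\leq \delta\ln(N/m)\leq\delta\ln(1/\delta)$.

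A connected subset of $\ui$ of length $L<1$ is an arc, and its $\tfrac1N$-neighborhood is an arc of length at most $L+\tfrac2N$, so the tail contributes at most $\delta\ln(1/\delta)+\tfrac2N$. Combining the two blocks gives $\Leb\big(\bigcup_{i=1}^N B(x_i,\tfrac1N)\big)\leq 2\delta+\delta\ln(1/\delta)+\tfrac4N$. Given $\epsilon>0$, I would first choose $\delta$ so small that $2\delta+\delta\ln(1/\delta)<\tfrac\epsilon2$, and then take $N$ large (with $\delta N\geq n_0$ and $\tfrac4N<\tfrac\epsilon2$); this produces arbitrarily large $N$ meeting the requirement of Theorem~\ref{nec}, and the corollary follows.

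I expect the only real subtlety to be the tail estimate: one must split the indices at precisely the scale $m\asymp\delta N$ at which the inter-point gaps $\delta/i$ drop below the ball radius $\tfrac1N$, so that beyond this scale the balls merge into a single arc whose total length is controlled by the harmonic-type sum $\sum_{\delta N<i\leq N}\delta/i\approx\delta\ln(1/\delta)$, which tends to $0$ with $\delta$. The initial block must then be shown to be negligible, which is immediate since it contains only $O(\delta N)$ balls of radius $\tfrac1N$.
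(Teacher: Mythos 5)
Your proposal is correct and follows exactly the route the paper intends: the paper gives no details, saying only that the corollary ``easily follows from Theorem~\ref{nec},'' and your argument is precisely that deduction, carried out by splitting the indices at $m\asymp\delta N$, bounding the initial block by subadditivity, and chaining the tail balls into a short arc so that $\Leb\big(\bigcup_{i=1}^N B(x_i,\tfrac1N)\big)$ can be made arbitrarily small for arbitrarily large $N$. The estimates all check out, so nothing further is needed.
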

The corollary easily follows from Theorem \ref{nec}.
\begin{ex} It also follows that if $x_n=\fracpartt{\ln(\ln (3+n))}$ (or even $\fracpartt{(\ln(2+n))^{\frac {99}{100}}}$) then $\bf{x}$ is not BC.
\end{ex}
To state the necessary and sufficient condition for a sequence to be Borel-Cantelli a definition is required.
\begin{Bob} Let $A=\{N_n\}$ be an infinite increasing sequence of natural numbers. Given ${\bf x}=\{x_{n}\}$,
 define $f_A(z):= \underset{ r \to 0^+}{\liminf} \ \underset{N \in A }{\limsup}\  \frac{\Leb(\underset{i=1}{\overset{N}{\cup}}B(x_i, \frac 1 N) \cap B(z,r))}{\Leb(B(z,r))}$.
\end{Bob}
\begin{lem} $f_A$ is measurable.
\end{lem}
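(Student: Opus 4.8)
The plan is to peel off every operation in the definition of $f_A$ that ranges over a countable set---these preserve measurability for free---and to isolate the one genuinely uncountable operation, namely the $\liminf$ over the continuum of radii $r\to 0^+$, which is the only real obstacle. Write $U_N=\underset{i=1}{\overset{N}{\cup}} B(x_i,\tfrac1N)$ (a fixed finite union of arcs) and set $g_N(r,z)=\Leb(U_N\cap B(z,r))/\Leb(B(z,r))$. First I would check that each $g_N$ is \emph{jointly continuous} in $(r,z)\in(0,\infty)\times\ui$: since $\Leb\big(B(z,r)\,\triangle\,B(z',r')\big)\le 2|z-z'|+2|r-r'|$, the numerator $(r,z)\mapsto\Leb(U_N\cap B(z,r))$ is Lipschitz, while the denominator $\Leb(B(z,r))=\min(2r,1)$ is continuous and strictly positive for $r>0$, so the quotient is continuous.

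Next I would rewrite the $\limsup$ and $\liminf$ as the appropriate countable infima and suprema and commute the two infima. Using $\limsup_{N\in A}g_N(r,z)=\inf_{M}\sup_{N\in A,\,N\ge M}g_N(r,z)$ together with $\liminf_{r\to0^+}\phi(r)=\sup_{k\ge1}\inf_{0<r<1/k}\phi(r)$, one obtains
\[
f_A(z)=\sup_{k\ge1}\ \inf_{M\ge1}\ \Big(\inf_{0<r<1/k}\ \Psi_M(r,z)\Big),\qquad \Psi_M(r,z):=\sup_{N\in A,\,N\ge M} g_N(r,z),
\]
where I have swapped the (order-independent) infima over $M$ and over $r$. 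The outer $\sup_k$ and $\inf_M$ run over countable index sets, so it suffices to prove that each $G_{M,k}(z):=\inf_{0<r<1/k}\Psi_M(r,z)$ is measurable.

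The crux is handled as follows. Because $\Psi_M$ is a pointwise supremum of the continuous functions $g_N$, it is \emph{lower semicontinuous jointly} in $(r,z)$; hence for every $c\in\R$ the set $\{(r,z)\in(0,1/k)\times\ui:\Psi_M(r,z)<c\}$ is open. Its projection to the $z$-coordinate is therefore open in $\ui$, and this projection is exactly $\{z:\inf_{0<r<1/k}\Psi_M(r,z)<c\}=\{z:G_{M,k}(z)<c\}$. Since $\{G_{M,k}<c\}$ is open for every $c$, the function $G_{M,k}$ is upper semicontinuous, and in particular Borel measurable. Assembling the pieces, $f_A=\sup_k\inf_M G_{M,k}$ is a countable combination of measurable functions, hence measurable.

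The only subtle point---and the step I expect to require the most care---is the passage from the uncountable infimum over $r$ to a measurable function of $z$; this is precisely where the joint lower semicontinuity of $\Psi_M$ and the fact that projections of open sets are open do the work, rather than any (doomed) attempt to reduce to a countable set of radii by invoking a nonexistent continuity of $r\mapsto\limsup_{N}g_N(r,z)$. Every remaining operation is countable and routine.
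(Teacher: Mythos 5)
Your overall strategy (peel off the countable operations, isolate the uncountable infimum over $r$) is reasonable, but the step you yourself identify as the crux contains a genuine error: you have the direction of semicontinuity backwards. A pointwise supremum of continuous functions such as $\Psi_M$ is lower semicontinuous, which means the \emph{super}level sets $\{\Psi_M>c\}$ are open; it does \emph{not} follow that the sublevel sets $\{\Psi_M<c\}$ are open. (Already on $\R$, $\sup_n\min(n|x|,1)$ equals $\mathbf{1}_{\{x\neq 0\}}$, and its sublevel set $\{\,\cdot<\tfrac12\}=\{0\}$ is closed, not open.) Consequently your conclusion that $G_{M,k}(z)=\inf_{0<r<1/k}\Psi_M(r,z)$ is upper semicontinuous is unjustified --- an infimum over $r$ of a supremum of continuous functions is in general neither upper nor lower semicontinuous --- and the proof as written does not establish measurability of $G_{M,k}$, which is exactly the uncountable-infimum step everything hinges on.

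The statement is still salvageable by either of two repairs. (i) Since $\Psi_M$ is lower semicontinuous, $\{\Psi_M<c\}=\bigcup_{n}\{\Psi_M\le c-\tfrac1n\}$ is an $F_\sigma$ subset of $(0,1/k)\times\ui$; as $(0,1/k)$ is $\sigma$-compact, the projection of each closed piece onto the $z$-coordinate is $\sigma$-compact, so $\{G_{M,k}<c\}$ is $F_\sigma$ and hence Borel. (ii) More simply --- and this is the paper's route --- exploit continuity in $r$ rather than trying to project: for fixed $r$ each $g_N(r,\cdot)$ is $\tfrac1r$-Lipschitz uniformly in $N$, so $f_{A,r}:=\limsup_{N\in A}g_N(r,\cdot)$ is continuous in $z$, and an elementary two-sided comparison between $f_{A,r}$ and $f_{A,r+\epsilon}$ (coming from $B(z,r)\subset B(z,r+\epsilon)$ and the annulus having measure $2\epsilon$) shows that the $\liminf$ as $r\to0^+$ may be computed along rational $r$ only; thus $f_A$ is a countable $\liminf$ of continuous functions and is measurable. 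Note that this is precisely the ``reduction to a countable set of radii via continuity in $r$'' that you dismissed as doomed; it is in fact available and is how the paper argues. Either repair closes the gap, but the argument as you wrote it does not.
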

\begin{proof} Let $f_{A,r}(z) = 
\underset{N \in A}{\limsup}\frac{\Leb(\underset{i=1}{\overset{N}{\cup}}B(x_i, \frac 1 N) \cap B(z,r))}{\Leb(B(z,r))}$. 
$f_{A,r}$ is continuous. Also, $f_{A,r+\epsilon}(z)+2 \epsilon \geq f_{A,r}(z) \geq f_{A,r+\epsilon}(z) \frac {r}{r+\epsilon}$. 
Thus, $f_A(z)=\!\underset{r \in \mathbb{Q}, r \to 0^+}{\liminf}f_{A,r}(z)$ \ and is therefore measurable.
\end{proof}
\begin{thm} \label{nec suff}
A sequence ${\bf x}=\{x_{n}\}$ is not BC if and only if\, $\Leb(f_A^{-1}(0))\!>\!0$\, for some 
sequence $A$.
\end{thm}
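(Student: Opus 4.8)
The plan is to prove the two implications separately, using throughout the abbreviation $U_N=\bigcup_{i=1}^N B(x_i,\tfrac1N)$, so that $f_A(z)=\liminf_{r\to0^+}\limsup_{N\in A}\Leb(U_N\cap B(z,r))/\Leb(B(z,r))$. The geometric link between the sets $U_N$ and honest standard sequences is the following: if $A=\{N_k\}$ is increasing and we set $a_n=\tfrac1{N_k}$ for $N_{k-1}<n\le N_k$ (with $N_0=0$), then ${\bf a}$ is a standard sequence (it is non-increasing, tends to $0$, and the $k$-th block contributes $\tfrac{N_k-N_{k-1}}{N_k}$ to the sum, so the sum diverges once $N_k\ge 2N_{k-1}$), and moreover $\bigcup_{N_{k-1}<n\le N_k}B(x_n,a_n)\subseteq U_{N_k}$, whence $\Limsup B(x_n,a_n)\subseteq\limsup_{k\to\infty}U_{N_k}$. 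This inclusion is what converts statements about $f_A$ into statements about $\Limsup B(x_n,a_n)$ and back.

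For the implication $\Leftarrow$, suppose $E:=f_A^{-1}(0)$ has $\Leb(E)>0$. Fix $m$. For each $z\in E$, since $f_A(z)=0$ there is an arbitrarily small radius $r$ with $\limsup_{N\in A}\Leb(U_N\cap B(z,r))<\tfrac1m\Leb(B(z,r))$, so that $\Leb(U_N\cap B(z,r))<\tfrac1m\Leb(B(z,r))$ for all large $N\in A$. These balls form a Vitali cover of $E$; I would extract a finite disjoint subfamily $B(z_1,r_1),\dots,B(z_p,r_p)$ covering all but $\tfrac1m$ of $E$, pick $N^{(m)}\in A$ exceeding the finitely many thresholds, and bound $\Leb(U_{N^{(m)}}\cap E)\le\sum_j\tfrac1m\Leb(B(z_j,r_j))+\tfrac1m\le\tfrac{C}{m}$ for an absolute constant $C$. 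Taking $m=2^k$ and $N_k:=N^{(2^k)}$ (passing to a subsequence so that the $N_k$ increase rapidly) gives $\sum_k\Leb(U_{N_k}\cap E)<\infty$, so by the convergence half of the Borel--Cantelli Theorem $\Leb(\limsup_k U_{N_k}\cap E)=0$. Feeding $A=\{N_k\}$ into the geometric link produces a standard sequence ${\bf a}$ with $\Leb(\Limsup B(x_n,a_n)\cap E)=0$; since $\Leb(E)>0$ the set $\Limsup B(x_n,a_n)$ is not of full measure, so ${\bf x}$ is not BC.

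For the implication $\Rightarrow$, suppose ${\bf x}$ is not BC, witnessed by a standard sequence ${\bf a}$ with $F:=\ui\setminus\Limsup B(x_n,a_n)$ of positive measure. After replacing $a_n$ by $\max(a_n,\tfrac1n)$ if necessary (this only enlarges the limsup set), I aim to arrange that $a_N\ge\tfrac1N$ for infinitely many $N$; let $A=\{N_k\}$ enumerate such $N$, thinned so that $N_k\ge 2N_{k-1}$. Because $a_i\ge a_{N_k}\ge\tfrac1{N_k}$ for every $i\le N_k$, any $z\in F$ with $|z-x_i|\ge a_i$ for all $i>M(z)$ satisfies $z\notin B(x_i,\tfrac1{N_k})$ for those $i$, while the finitely many $i\le M(z)$ are irrelevant once $\tfrac1{N_k}$ is small; hence a.e.\ $z\in F$ lies in only finitely many $U_{N_k}$, i.e.\ $\Leb(\limsup_k U_{N_k})<1$. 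Writing $V_m=\bigcup_{k\ge m}U_{N_k}$, for a.e.\ $z\in F$ there is $m(z)$ with $z\notin V_{m(z)}$, and at a.e.\ such $z$ (a density point of the closed set $\ui\setminus V_{m(z)}$) the Lebesgue density theorem gives $\Leb(V_{m(z)}\cap B(z,r))/\Leb(B(z,r))\to0$. Since $\limsup_{k}\Leb(U_{N_k}\cap B(z,r))/\Leb(B(z,r))\le \Leb(V_{m(z)}\cap B(z,r))/\Leb(B(z,r))$ (the limsup ignores the finitely many $k<m(z)$), this forces $f_A(z)=0$. Thus $f_A=0$ a.e.\ on $F$ and $\Leb(f_A^{-1}(0))\ge\Leb(F)>0$.

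The step I expect to be the main obstacle is the reduction, in the $\Rightarrow$ direction, guaranteeing a witness whose radii are commensurate with the fixed radius $\tfrac1N$ of $U_N$. The passage from $a_n$ to $\max(a_n,\tfrac1n)$ is harmless only if the enlarged sequence is still a witness; if instead the critical sequence $\{1/n\}$ already produces a full-measure limsup while some genuinely subcritical witness (with $a_n<1/n$ eventually) does not, then avoidance of the small $a$-balls no longer transfers to avoidance of the larger $\tfrac1{N}$-balls, and a more careful choice of $A$ — matching the scale of $U_N$ to the block on which $a_n\approx\tfrac1N$ — is required. Establishing that every non-BC sequence admits a witness detectable at the scale $\tfrac1N$, equivalently that the density of the single family $\{U_N\}$ governs all standard sequences, is the delicate point and is exactly where the machinery behind Theorem~\ref{suff} (Corollary~\ref{link2}) must be invoked. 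The remaining ingredients — the Vitali extraction with its diagonalization over $m$, and the density-point argument — are routine.
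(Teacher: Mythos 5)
Your ``$\Leftarrow$'' direction is sound and close in spirit to the paper's: both convert membership in $f_A^{-1}(0)$ into the block-constant witness $a_n=\tfrac1{N_{k_j}}$ for $N_{k_{j-1}}<n\le N_{k_j}$; your packaging via a Vitali extraction followed by the convergence half of Borel--Cantelli is, if anything, cleaner than the paper's density-point formulation. The problem is the ``$\Rightarrow$'' direction, where you have correctly located the difficulty but not resolved it, and the step you defer is the entire content of the theorem. Writing $U_N=\underset{i=1}{\overset{N}{\cup}}B(x_i,\tfrac1N)$, the replacement of $a_n$ by $\max(a_n,\tfrac1n)$ destroys the witness exactly in the case that matters: since $B(x_n,\max(a_n,\tfrac1n))=B(x_n,a_n)\cup B(x_n,\tfrac1n)$, one has $\Limsup B(x_n,\max(a_n,\tfrac1n))=\Limsup B(x_n,a_n)\cup\Limsup B(x_n,\tfrac1n)$, so if the only witnesses are subcritical (i.e.\ $na_n\to0$, e.g.\ $a_n=\tfrac1{n\ln n}$, the very case Remark \ref{ybc} is about) while $\{\tfrac1n\}$ already produces a full-measure limsup, the enlarged sequence is no longer a witness and your pointwise transfer from avoidance of $B(x_i,a_i)$ to avoidance of $B(x_i,\tfrac1{N_k})$ never gets off the ground. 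There is no inclusion-of-balls argument here; the implication must pass through measure.

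The paper's mechanism is the contrapositive of the quantitative local lower bound (Lemma \ref{link}/Corollary \ref{link2}): if $\Leb(\Limsup B(x_n,a_n)\cap J)\le\constant d\,\Leb(J)$ for a ball (or finite union of balls) $J$, then $\Leb(U_N\cap J)<d\,\Leb(J)$ for infinitely many $N$ --- valid for an \emph{arbitrary} standard witness ${\bf a}$, with no relation assumed between $a_n$ and $\tfrac1N$. Even granting this, one infinitely-often statement per ball is still far from $\Leb(f_A^{-1}(0))>0$: the definition of $f_A$ requires, for a positive measure of $z$ and for radii $r\to0$, that \emph{all} sufficiently large $N\in A$ give small density in $B(z,r)$, with a single sequence $A$ serving all such $z$ at once. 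The paper therefore runs a hierarchical covering: generation-$k$ balls centered at points of $S^c$-density at least $1-10^{-2k}$, a single time $N_k$ extracted (by applying Corollary \ref{link2} to the union of the generation-$k$ balls) so that all but a $10^{-k}$-fraction of them individually see density $O(10^{-k})$ of $U_{N_k}$, and an annulus-shrinking device guaranteeing that each $z$ in the surviving nested intersection inherits, for every $k$, a radius $r_k^{(i)}-|z-y_k^{(i)}|$ at which \emph{all} later $N_m\in A$ remain controlled. None of this appears in your sketch, and it is not a routine completion of the ``remaining ingredients''; as written, the forward implication is unproved.
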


We defer the proof of this theorem to the end of the section and first state some consequences.
\begin{rem} \label{ybc} The theorem shows that the BC property  can be detected by sequences of the form $a_i=\frac 1 {N_j}$ for $N_{j-1}<i\leq N_j$.
For the purposes of testing of the BC property one need not bother with the many standard sequences such that
$\underset{n\to \infty}{\limsup} \, na_n=0$ (such as $a_n=\frac 1 {n \ln (n)}$).
\end{rem}
\begin{rem} If one were to define $\tilde{f}(z)=\underset{ r \to 0^+}{\limsup} \ \underset{N \to \infty}{\liminf}\ \frac{\Leb(\underset{i=1}{\overset{N}{\cup}}B(x_i, \frac 1 N) \cap B(z,r))}{\Leb(B(z,r))}$, then there are BC sequences such that $\tilde{f}(z)=0$ for almost every $z$.
\end{rem}
\begin{rem} If we choose $A=\{1,2,3,...\}$ then there exist non-BC sequences such that $f_A=1$ almost everywhere.
\end{rem}

\begin{ex} It follows that a non uniquely ergodic IET has orbits that are not BC. Additionally, if $T: [0,1) \to [0,1)$ is a continuous, $\Leb$ measure preserving transformation that is not $\Leb$ ergodic then $\Leb$ almost every orbit is not BC.
\end{ex}
\begin{ex}\label{RV2} It follows that if $\{R_n\}$ is a sequence of independent random variables each distributed
according to a measure $\mu$, then $\{R_1(\zeta), R_2(\zeta),...\}$ is BC for $\mu^{\mathbb{N}}$ almost every $\zeta$ if and only if  $\Leb \ll \mu$.
\end{ex}

\begin{rem}\label{gaps} Fix a uniformly distributed mod 1 sequence $\bf x$. The first $n$ points define a partition of the $[0,1)$ into segments of length $\delta_1^{(n)},...,\delta_{n+1}^{(n)}$. It follows from Theorem~\ref{nec suff} that if for any $\epsilon>0$ there exists a constant $s_{\epsilon}>0$ such that for large $n$, all but $\epsilon n$ of the $\delta_i ^{(n)}$ are bigger than $\frac{s_{\epsilon}}{n}$ then $\bf x$ is Borel-Cantelli.

The above (sufficient) criterion applies to conclude that the square root sequence $\{\fracpartt{\sqrt{k}}\}_{k\geq1}$  is BC.  
The validation of the criterion is based on the gap distribution results for this sequence by N.~Elkies and C.\ McMullen
\cite[Theorem 1.1]{EM}. Likewise, the sequence $\{\fracpartt{k^r \alpha}\}_{k \geq 1}$ is BC for any integer $r \geq 2$ 
and almost every $\alpha$ by Z.~Rudnick and P.\ Sarnak \cite[Theorem 1]{RS}.
\end{rem}

The following is a stronger sufficient condition (than the one in Remark \ref{gaps})  for a sequence $\bf x$ to be  BC  which 
is easier to apply in some situations.
\begin{rem} \label{pairs} Let $\bf x$ be a sequence uniformly distributed in $\ui=[0,1)$. Denote 
\[
X_n(u)=\{(p,q)\mid 1\leq p<q\leq n,\, |x_p-x_q|<u\}, \quad \text{for\, $n\geq1$,\,$u>0$}.
\]
Assume that for any $\epsilon>0$ there exists a constant $s_{\epsilon}>0$ 
such that, for all large $n$, the cardinality of the set  $X_n(\frac{s_{\epsilon}}{n})$
does not exceed  $\epsilon n$.  Then  $\bf x$ is Borel-Cantelli.
\end{rem}

We now begin the proofs of Theorems \ref{suff} and \ref{nec suff} with the key lemma of this paper. It is distilled from the proof of \cite[Lemma 4]{kurz}.
\begin{lem} \label{key} Let $M \in \mathbb{N}, c>0, e>0$ be constants,
let $\bf{x}$ be a sequence in $\ui$ and $\bf a$ be a standard sequence. If for all $r\in \mathbb{N}$ at least $cM^r$ of the points in the set
$\{x_{M^{r-1}}, x_{M^{r-1}+1},..., x_{M^r}\}$ are $\frac e {M^r}$ separated from each other, then there exists \mbox{$\delta>0$}
depending only on $ c$ and $e$ such that $\Leb (\Limsup B(x_n, a_n) )> \delta $. In particular $\delta$ is independent of
 $\bf a$ (so long as $\bf a$ is standard).
\end{lem}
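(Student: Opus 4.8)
We have a sequence $\mathbf{x}$ in $\mathcal{I}=[0,1)$, a standard sequence $\mathbf{a}$ (non-increasing, positive, summing to infinity, tending to 0), and constants $M\in\mathbb{N}$, $c>0$, $e>0$. The hypothesis: for every $r$, at least $cM^r$ of the points $x_{M^{r-1}},\ldots,x_{M^r}$ are $\frac{e}{M^r}$-separated from each other. We want to show $\lambda(\limsup B(x_n,a_n)) > \delta$ for some $\delta = \delta(c,e)$ independent of $\mathbf{a}$.

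Let me think about the structure here.

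**The key idea — blocks and scales:**

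The points come in blocks indexed by $r$: block $r$ is roughly $\{x_{M^{r-1}},\ldots,x_{M^r}\}$, containing about $M^r$ points. In block $r$ we have $\geq cM^r$ points that are pairwise $\frac{e}{M^r}$-separated.

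The radii $a_n$ are non-increasing. For $n$ in block $r$ (i.e., $M^{r-1} \leq n \leq M^r$), we have $a_{M^r} \leq a_n \leq a_{M^{r-1}}$.

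**How to get a lower bound on the measure:**

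For a single block $r$, consider the union $\bigcup_{n \in \text{block } r} B(x_n, a_n)$. Using only the separated points: we have $\geq cM^r$ points, pairwise $\frac{e}{M^r}$-separated. If the radii were all at least $\frac{e}{2M^r}$ (comparable to the separation), then the balls around these separated points would be "essentially disjoint" and cover measure $\geq cM^r \cdot (\text{something}/M^r) = c\cdot(\text{const})$, giving a fixed lower bound.

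But the radii can be tiny. However — and this is the crux — we're taking a limsup over ALL $n$. The sum $\sum a_n = \infty$ means the radii can't decay too fast on average. So even if within a single block the radii are small, summed across blocks they contribute enough total measure.

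**My proposed approach:**

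I would work at a carefully chosen scale. For each $r$, let $\mathcal{S}_r$ denote the set of $\geq cM^r$ separated points in block $r$. The radii $a_n$ for $n$ in block $r$ all exceed $a_{M^r}$. I want to bound the measure of $\bigcup_{r \geq k} \bigcup_{n \in \mathcal{S}_r} B(x_n, a_n)$ from below, uniformly in $k$.

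The trick: compare $a_n$ to the natural scale $\frac{1}{M^r}$. Two regimes:

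Regime A (large radii): blocks $r$ where $a_{M^r} \geq \frac{e}{2M^r}$, equivalently $M^r a_{M^r} \geq e/2$. Here the separated balls are essentially disjoint and the single block already covers measure $\geq c \cdot \text{const}(e)$.

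Regime B (small radii): blocks where $M^r a_{M^r} < e/2$. Here I need to sum across many blocks. The total-measure contribution from block $r$ (from separated points with essentially-disjoint balls, since separation $\frac{e}{M^r}$ exceeds $2a_n$ when $a_n < \frac{e}{2M^r}$) is $\geq cM^r \cdot 2a_{M^r} \cdot(\text{const})$... I'd need to relate $\sum_r cM^r a_{M^r}$ to $\sum_n a_n$. By monotonicity and the dyadic-block structure, $\sum_{n \in \text{block } r} a_n \leq M^r a_{M^{r-1}}$ and $\geq (M^r - M^{r-1})a_{M^r}$, so $M^r a_{M^r}$ is comparable to the block-sum of $a_n$, and $\sum_n a_n = \infty$ forces infinitely much total measure.

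**Converting total measure to limsup measure:**

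The subtlety: large total measure across disjoint-looking pieces gives $\limsup$ measure bounded below, but I must avoid the pieces piling up on a small set. This is where a second-moment / overlap estimate enters. I expect to use a lemma of the Borel–Cantelli-converse type (e.g. Erdős–Chung / Kochen–Stone, or a paintbrush/overlap argument): if $\sum \lambda(E_n) = \infty$ and the overlaps $\sum_{m,n \leq N} \lambda(E_m \cap E_n)$ are controlled by $(\sum_{n\leq N}\lambda(E_n))^2$ up to a constant, then $\lambda(\limsup E_n) \geq \delta > 0$.

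**The main obstacle:**

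The hard part will be controlling overlaps — bounding $\lambda(E_m \cap E_n)$ across different blocks and getting a uniform $\delta(c,e)$. Within a block, separation gives near-disjointness for the chosen points. Across blocks, balls at very different scales can nest, and I must show the correlation sum is $O((\sum \lambda(E_n))^2)$ with constant depending only on $c,e,M$. Handling the two radius-regimes uniformly, and ensuring the resulting $\delta$ depends only on $c$ and $e$ (not $M$ or $\mathbf{a}$), is the delicate bookkeeping I'd expect to consume most of the proof.
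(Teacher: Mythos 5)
Your setup is right --- the block decomposition, the two radius regimes (which amount to the paper's truncation $b_i=\min\{a_{M^j},\tfrac{e}{2M^j}\}$ for $M^{j-1}\leq i<M^j$), and the Cauchy--condensation comparison of $\sum_j M^j a_{M^j}$ with $\sum_n a_n$ (this is Lemma \ref{simple}). But the step you defer --- converting divergent total measure into a lower bound on the limsup --- is the actual content of the lemma, and the second-moment route you propose (Chung--Erd\H{o}s/Kochen--Stone) requires a correlation bound $\sum_{m,n\le N}\Leb(E_m\cap E_n)=O\big((\sum_{n\le N}\Leb(E_n))^2\big)$ that is not automatic here: across blocks the points are completely unconstrained, so the cross terms can only be controlled by the same packing count you would need anyway, and you have not carried it out. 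As written, the proposal identifies the obstacle but does not overcome it.

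The paper avoids second moments entirely with a greedy accumulation argument. After truncating the radii as above, it suffices to show that every tail union $U_{k_0}=\underset{i\geq k_0}{\cup}B(x_i,b_i)$ has measure $>\delta:=\tfrac{ec}{2}$, since $\Limsup B(x_n,b_n)$ is the decreasing intersection of these tail unions. The key packing observation is that a set of measure $<\delta$ can meet (after fattening by $\tfrac{e}{2M^j}$) at most $\delta\tfrac{M^j}{e}+O(M^{j-1})=\tfrac{c}{2}M^j+O(M^{j-1})$ of the $\tfrac{e}{M^j}$-separated points of block $j$. Hence, as long as the accumulated union $\underset{k_0\leq i\leq M^{j-1}}{\cup}B(x_i,b_i)$ still has measure $<\delta$, at least $(\tfrac c2-\tfrac1M)M^j$ separated points of block $j$ contribute balls \emph{disjoint} from it (here one first replaces $M$ by a power so that $c>\tfrac2M$), adding new measure at least $(\tfrac c2-\tfrac1M)\underset{M^{j-1}\leq i\leq M^j}{\sum}b_i$. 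These increments form a divergent series, so the measure must exceed $\delta$ at some finite stage; no overlap estimate between distinct blocks is ever needed. If you want to salvage your approach, you should either prove the quasi-independence bound (which will reduce to this same packing count, applied pairwise) or switch to the accumulation argument.
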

\begin{rem} \label{other} If one imposes stricter conditions on $\bf a$ then one can prove versions of this lemma with weaker hypotheses. For instance, if one wishes that $\bf a$ is standard and $ia_i$ is monotone then one only needs to assume that $cM^r$ of the points $\{x_{M^{r-1}}, x_{M^{r-1}+1},..., x_{M^r}\}$ are $\frac e {M^r}$ separated from each other for a positive (lower) density set of $r$. Call such a sequence a \emph{Khinchin sequence}. This approach is carried out to prove \cite[Theorem 8]{kurz iet}. One can prove versions of Theorems \ref{suff}, \ref{nec} and \ref{nec suff} in this context. In particular, for Khinchin sequences one can obtain the analogue to Theorem 3 by letting $f_A(z)=\underset{r \to 0^+}{\liminf} \ \underset{N \in A}{\limsup} \ \frac 1 N \underset{k=1}{\overset{N}{\sum}} \frac{\Leb(\underset{i=1}{\overset{2^k}{\cup}} B(x_i,\frac 1 {2^k}) \cap B(z,r))}{2r}$. In the more involved direction the proof is similar (using the fact that for Khinchin
  sequences one may apply Lemma \ref{simple} twice), and in the other direction let $a_i=\frac 1 {i \log(N_j)}$ for $N_{j-1} \leq i<N_j$. This can also be carried out in the Ahlfors regular setting.
\end{rem}

The following well known and simple fact is used in the proof of Lemma \ref{key}. 
\begin{lem} \label{simple} Let $M\geq2$ be an integer and $\bf a$ be a non-increasing sequence. Then $\underset{i=0}{\overset{\infty}{\sum}} a_i$ diverges if and only if\, $\underset{i=0}{\overset{\infty}{\sum}} M^{i-1} a_{M^i}$ diverges.
\end{lem}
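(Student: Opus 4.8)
The plan is to prove this by the Cauchy condensation argument, adapted from the familiar base $2$ to an arbitrary integer base $M \geq 2$. First I would dispose of signs: we may assume every $a_i \geq 0$. Indeed, if the non-increasing sequence $\bf a$ ever takes a negative value $a_N < 0$, then $a_n \leq a_N < 0$ for all $n \geq N$, so $\sum_i a_i$ diverges to $-\infty$; the same tail forces $a_{M^i} < 0$ for all large $i$, whence $\sum_i M^{i-1} a_{M^i}$ diverges as well, and the claimed equivalence holds trivially. So from now on all terms are nonnegative, and the finitely many initial terms $a_0, a_1$ may be discarded without affecting any convergence conclusion.

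With $a_i \geq 0$ the idea is to partition the indices into the geometric blocks $\{n : M^i < n \leq M^{i+1}\}$ for $i \geq 0$, each of which contains exactly $M^{i+1} - M^i = M^i(M-1)$ integers and which together cover all $n \geq 2$. Since $\bf a$ is non-increasing, every term $a_n$ in the $i$-th block satisfies $a_{M^{i+1}} \leq a_n \leq a_{M^i}$. Summing over the block yields the two-sided estimate $M^i(M-1)\, a_{M^{i+1}} \leq \sum_{M^i < n \leq M^{i+1}} a_n \leq M^i(M-1)\, a_{M^i}$, which is the one place where monotonicity is used.

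Next I would sum these block inequalities over $i \geq 0$. The upper bounds give $\sum_{n \geq 2} a_n \leq (M-1) \sum_{i \geq 0} M^i a_{M^i}$, while the lower bounds give $\sum_{n \geq 2} a_n \geq (M-1) \sum_{i \geq 0} M^i a_{M^{i+1}} = \tfrac{M-1}{M} \sum_{j \geq 1} M^j a_{M^j}$ after the re-indexing $j = i+1$. Thus the partial sums of $\sum_i a_i$ are sandwiched between fixed positive multiples of the partial sums of $\sum_i M^i a_{M^i}$, so these two series converge or diverge together. Finally, since $\sum_i M^{i-1} a_{M^i} = \tfrac 1M \sum_i M^i a_{M^i}$ differs from the latter only by the constant factor $\tfrac 1M$, its divergence is equivalent to that of $\sum_i M^i a_{M^i}$, and the equivalence with $\sum_i a_i$ follows.

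There is essentially no serious obstacle here; this is the classical condensation test, and the whole content is the per-block comparison driven by monotonicity. The only points needing minor care are the bookkeeping that aligns the block endpoints so the summed inequalities telescope cleanly, and the harmless re-indexing on the lower bound. I expect the entire argument to fit in a few lines of routine estimates.
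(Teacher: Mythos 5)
Your proposal is correct: it is the standard Cauchy condensation argument in base $M$, with the block estimate $M^i(M-1)\,a_{M^{i+1}} \leq \sum_{M^i < n \leq M^{i+1}} a_n \leq M^i(M-1)\,a_{M^i}$ doing all the work, and the sign discussion and re-indexing are handled properly. The paper itself gives no proof of this lemma, labeling it a ``well known and simple fact,'' so your argument simply supplies the intended (and essentially unique) proof; there is nothing to compare it against.
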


\begin{proof}[Proof of Lemma {\rm\ref{key}}]  WLOG assume that $c> \frac 2 {M}$. We may do this by replacing $M$ with a power of $M$ (the new $c$ is $\frac {M^k-1} {M^k} c$ which can be greater than $\frac 2 {M^k}$ for some $k$). To ease in computation we replace $a_1,a_2,...$ by $b_1,b_2,...$ where $b_i = \min \{a_{M^j}, \frac e {2M^j} \}$ for $ M^{j-1} \leq i <M^j$. It suffices to show that for any $k_0$, $\Leb(\underset{i=k_0}{\overset{\infty}{\cup}}B(x_i,b_i))>\delta:=\frac{e c}2$.

Observe that a $\frac{e}{2M^j}$ neighborhood of $B(z,r)$
 can contain at most $\lceil 2r \frac{M^j}{e} \rceil+1$
 points that are $\frac e {M^j} $ separated.
 If $\Leb(\underset{i=k_0}{\overset{M^{j-1}}{\cup}} B(x_i,b_i))<\delta$ then at most $$2M^{j-1}+ \tfrac{M^j}{e}\delta=2M^{j-1}+\tfrac c 2 M^{j}$$ of the separated points from $\{x_{M^{j-1}},x_{M^{j-1}+1},...,x_{M^j}\}$ have $B(x_t,b_t)\cap \underset{i=k_0}{\overset{M^{j-1}}{\cup}} B(x_i,b_i)) \neq \emptyset$.
This leaves at least $(\frac c 2-\frac1 M)M^j$ separated points. 
This is positive because $c>\frac 2 M$. This means that 
\[
\Leb\big( (\underset{i=M^{j-1}}{\overset{M^{j}}{\cup}} B(x_i,b_i)\, \backslash
\underset{i=k_0}{\overset{M^{j-1}}{\cup}} B(x_i,b_i)\big) 
\geq \big(\tfrac c 2-\tfrac1 M\big)\!\!\underset{i=M^{j-1}}{\overset{M^j}{\sum}} b_i.
\]

Since $\underset{i=1}{\overset{\infty}{\sum}}b_i=\infty$,  the inequality \
$
{\underset{j=g}{\overset{\infty}{\sum}} \Big((\tfrac c2-\tfrac 1 M)\underset{i=M^{j-1}}{\overset{M^j}{\sum}} b_i \Big)>\delta}
$ \
holds for any $g$.  The proof is completed.
\end{proof}

The following local version is an immediate corollary.
\begin{cor} \label{loc key} Let $M \in \mathbb{N}, c>0, e>0$ be constants,  $\bf{x}$ be a sequence
in $\ui$ and $\bf a$ be a standard sequence. If there exists an interval $J$ such that for all
$r\in \mathbb{N}$ at least $c\Leb(J)M^r$ of the points in the set
$\{x_{M^{r-1}}, x_{M^{r-1}+1},..., x_{M^r}\}$ are $\frac e {M^r}$ separated from each other, and
lie in $J$ then there exists $\delta>0$ depending only on $ c$ and $e$ such that
${\Leb(\Limsup B(x_n, a_n) \cap J)> \delta\Leb(J)}$.
\end{cor}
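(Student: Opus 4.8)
The plan is to deduce Corollary \ref{loc key} from Lemma \ref{key} by an affine rescaling of the interval $J$ onto all of $\ui$, under which the local hypothesis on $J$ becomes exactly the global hypothesis of the lemma. Write $\ell=\Leb(J)$ and let $\psi\colon\ui\to J$ be the affine bijection with inverse $\phi=\psi^{-1}\colon J\to\ui$, so that $\phi$ multiplies distances inside $J$ by $1/\ell$ and $\psi$ multiplies Lebesgue measure by $\ell$. Define a rescaled sequence $\bf y$ by $y_i=\phi(x_i)$ whenever $x_i\in J$ (and, say, $y_i=0$ otherwise), and rescaled radii $a_i'=a_i/\ell$; the latter is again standard, being a positive constant multiple of a standard sequence.

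First I would verify that $\bf y$ meets the hypothesis of Lemma \ref{key}. By assumption, for each $r$ at least $c\ell M^r$ of the points $x_i$ with $M^{r-1}\le i\le M^r$ lie in $J$ and are $\tfrac{e}{M^r}$-separated there; applying $\phi$, the corresponding $y_i$ (at the same indices) are $\tfrac{e/\ell}{M^r}$-separated, and there are at least $(c\ell)M^r$ of them. Thus $\bf y$ satisfies the hypothesis of Lemma \ref{key} with constants $\hat c=c\ell$ and $\hat e=e/\ell$, so the lemma produces $\Leb\big(\Limsup B(y_n,a_n')\big)>\hat\delta$ for some $\hat\delta$ depending only on $\hat c$ and $\hat e$; inspecting the proof, $\hat\delta$ may be taken arbitrarily close to $\tfrac{\hat c\hat e}{2}=\tfrac{ce}{2}$. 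The cancellation of $\ell$ in the product $\hat c\hat e=ce$ is exactly what makes the resulting constant $\delta:=\hat\delta$ depend only on $c$ and $e$, as the statement requires.

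Next I would push the conclusion back through $\psi$. If $z\in\Limsup B(y_n,a_n')$ then $z\in B(y_n,a_n')$ for infinitely many $n$, and for each such $n$ the image $\psi(z)\in J$ satisfies $|\psi(z)-x_n|=\ell\,|z-y_n|<\ell a_n'=a_n$, so that $\psi(z)\in B(x_n,a_n)\cap J$ for infinitely many $n$. This gives the inclusion $\psi\big(\Limsup B(y_n,a_n')\big)\subseteq \Limsup B(x_n,a_n)\cap J$, and since $\psi$ scales Lebesgue measure by $\ell$ we obtain $\Leb\big(\Limsup B(x_n,a_n)\cap J\big)\ge \ell\,\Leb\big(\Limsup B(y_n,a_n')\big)>\ell\hat\delta=\delta\,\Leb(J)$.

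The one point requiring care, and the only real obstacle, is reconciling the circular metric on $\ui=\R/\Z$ used in Lemma \ref{key} with the interval geometry of $J$: after rescaling $J$ up to the full circle, points near the two ends of $J$ could become artificially close in the wrap-around metric, and the identity $|\psi(z)-x_n|=\ell\,|z-y_n|$ could fail for wrap-around balls. This is resolved by observing that Lemma \ref{key} holds verbatim with the \emph{interval} metric in place of the circular one: the only geometric input to its proof is the packing bound that a set of measure $\mu$ contains at most $\mu M^j/e+O(1)$ points which are $\tfrac{e}{M^j}$-separated, and this is valid on an interval exactly as on the circle. Using the interval version, all balls $B(y_n,a_n')$ are genuine subintervals with no wrap-around, the separation of the $y_i$ is exactly $\tfrac{e/\ell}{M^r}$, and the transfer identity above holds without qualification; the clipping of balls $B(x_n,a_n)$ meeting $\partial J$ only shrinks sets and hence preserves the final lower bound. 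I expect this boundary bookkeeping to be essentially the entire difficulty, the remainder being a direct change of variables.
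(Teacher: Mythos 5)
Your argument is essentially correct, but it takes a genuinely different route from the paper's. The paper offers no separate proof: it declares the corollary ``immediate'' from Lemma \ref{key} because that lemma's proof is already local --- all the measure it produces comes from the balls $B(x_t,b_t)$ centered at the separated points, which here lie in $J$ and have radii $b_t\le \tfrac{e}{2M^j}\to 0$, so (discarding the $O(1)$ separated points per scale within $\tfrac{e}{2M^j}$ of $\partial J$) the accumulated measure $\ge(\tfrac{c\Leb(J)}2-\tfrac1M)\sum b_i$ lands inside $J$, giving $\delta=\tfrac{ec}{2}\cdot\Leb(J)$ directly. You instead conjugate by the affine map $\psi:\ui\to J$ and invoke the lemma as a black box on the rescaled data $(\hat c,\hat e)=(c\ell,e/\ell)$, observing that $\hat c\hat e=ce$ so the constant is scale-free. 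This buys a cleaner logical separation (you never reopen the proof of Lemma \ref{key} except to note it works for the interval metric, which it does --- the packing bound and the estimate $\Leb(B(y,r))\le 2r$ are the only geometric inputs), at the cost of the wrap-around and boundary bookkeeping you correctly identify and handle. The point that $\hat c$ may force a larger power of $M$ in the lemma's ``WLOG $c>2/M$'' step without affecting $\delta$ is also handled correctly by the cancellation $\hat c\hat e=ce$.

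One step as written is false and needs a one-line repair: for indices $n$ with $x_n\notin J$ you set $y_n=0$, and for such $n$ the identity $|\psi(z)-x_n|=\ell|z-y_n|$ fails, so ``$z\in B(y_n,a_n')$ for infinitely many $n$'' does not by itself give $\psi(z)\in B(x_n,a_n)$ for infinitely many $n$. The fix is to note that all dummy balls share the center $0$ and $a_n'\downarrow 0$, so $\Limsup_{n:\,x_n\notin J} B(0,a_n')=\bigcap_k B(0,a_k')=\{0\}$ has measure zero; hence almost every $z\in\Limsup B(y_n,a_n')$ lies in infinitely many \emph{genuine} balls, and the inclusion $\psi\bigl(\Limsup B(y_n,a_n')\bigr)\subseteq \Limsup B(x_n,a_n)\cap J$ holds up to a null set, which is all the measure estimate requires. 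With that emendation the proof is complete.
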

\begin{lem}\label{link} Let $J\subset \ui$ be an interval and assume that for some  $d>0$ the inequality
$\underset{N \to \infty}{\liminf} \ \Leb(\underset{i=1}{\overset{N}{\cup}} B(x_i, \frac 1 N) \cap J) \geq d\Leb(J)$
holds. Then 
\[
\Leb(\Limsup B(x_n,a_n) \cap J)>\constant\, d\,\Leb(J).
\]
\end{lem}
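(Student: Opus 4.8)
The plan is to deduce this from the local version of the key lemma, Corollary \ref{loc key}, by verifying its separation hypothesis from the density hypothesis we are given. The main conceptual content is a covering/packing argument: a lower bound on the \emph{measure} covered by the $\frac1N$-balls centered at $x_1,\dots,x_N$ inside $J$ must force a lower bound on the \emph{number} of these centers that are well separated inside $J$. Once that is established, the conclusion $\Leb(\Limsup B(x_n,a_n)\cap J)>\constant\,d\,\Leb(J)$ is exactly the output of Corollary \ref{loc key} (with $\delta=\constant\,d\,\Leb(J)$ arising from the explicit constant $\constant=\tfrac1{12}$), so the $\tfrac1{12}$ is precisely the price paid in the packing estimate.

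First I would fix $M$ (the base in Corollary \ref{loc key}) and run the density hypothesis along the scale $N=M^r$. By assumption, for all large $r$ we have $\Leb\big(\bigcup_{i=1}^{M^r}B(x_i,\tfrac1{M^r})\cap J\big)\ge \tfrac{d}{2}\Leb(J)$, say (absorbing the $\liminf$ into a factor). Next I would extract from the full index range $\{1,\dots,M^r\}$ the contribution coming only from the block $\{x_{M^{r-1}},\dots,x_{M^r}\}$: the balls with index $i\le M^{r-1}$ are few in number (there are only $M^{r-1}$ of them) and each has measure at most $\tfrac{2}{M^r}$, so together they cover measure at most $2/M$, which is small relative to $\Leb(J)$ once $M$ is large and $J$ is not too tiny. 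Hence the block indices alone cover a definite fraction of $J$.

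Then comes the step I expect to be the main obstacle: converting ``these $\le M^r-M^{r-1}$ balls of radius $\tfrac1{M^r}$ cover measure $\ge c'\Leb(J)$ inside $J$'' into ``at least $c\Leb(J)M^r$ of the centers, lying in $J$, are $\tfrac{e}{M^r}$-separated.'' I would do this by a greedy maximal-separation selection: pick a maximal $\tfrac{e}{M^r}$-separated subset of the centers in $J$; by maximality every center is within $\tfrac{e}{M^r}$ of a chosen one, so the chosen balls, inflated to radius $\tfrac1{M^r}+\tfrac{e}{M^r}$, already cover the entire union and hence all the covered measure. Each inflated ball has measure at most $\tfrac{2}{M^r}(1+e)$, so if $K$ is the number of chosen centers then $K\cdot\tfrac{2(1+e)}{M^r}\ge c'\Leb(J)$, giving $K\ge c\Leb(J)M^r$ with an explicit $c$ depending on $c'$ and $e$. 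The delicate bookkeeping is keeping the boundary effects (centers near $\partial J$, whose balls stick partly outside $J$) from eroding the constant, and choosing $e$ and $M$ so that the final product of constants works out to the clean value $\constant\,d=\tfrac{d}{12}$.

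With the separation hypothesis of Corollary \ref{loc key} now verified for these $c,e,M$ and for every large $r$ (the finitely many small $r$ being irrelevant since $\Limsup$ ignores any finite initial segment), I would simply invoke Corollary \ref{loc key} to obtain $\delta>0$, depending only on $c$ and $e$ — hence ultimately only on $d$ — with $\Leb(\Limsup B(x_n,a_n)\cap J)>\delta\Leb(J)$, and track the constants to see $\delta=\constant\,d$. Because the separation bound I produced is uniform in $J$ (everything scales with $\Leb(J)$), the resulting estimate holds for all intervals $J$ simultaneously, which is what the statement asserts.
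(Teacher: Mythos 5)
Your proposal follows essentially the same route as the paper's proof: both convert the measure lower bound into a count of $\tfrac{e}{M^r}$-separated centers via a maximal-separated-set packing estimate (the paper bounds the measure of the union of balls over a non-separated cluster by $\tfrac3N$, which is your inflated-ball count in slightly different clothing), both discard the first $M^{r-1}$ indices and the at most two boundary balls, and both then feed the resulting $c,e,M$ into the key lemma (Corollary \ref{loc key}). The only cosmetic difference is that the paper subtracts $M^{r-1}$ from the separated-point count while you subtract the measure those balls can cover; either works.
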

\begin{proof} For simplicity for each $N$ we ignore the effect of the at most 2 different $B(x_i,\frac 1 N)$ where
 $B(x_i,\frac 1 N)\cap J\neq \emptyset$ but
 $B(x_i, \frac 1 N) \not
 \subset J$.
 Notice that if $\{y_1,y_2,...,y_k\}$ does not contain 2 points that are $\frac 1 N$ separated
then $\Leb(\underset{i=1}{\overset{k}{\cup}} B(y_i,\frac 1
N)) \leq \frac 3 N$. This is because $y_1,...,y_k$ are contained in
an interval of length at most $\frac 1 N$. 

It follows if $\frac 1 3
\Leb(\underset{i=1}{\overset{N}{\cup}} B(x_i,\frac 1 N)) \geq g$
then $\{x_1,x_2,...,x_N\}$ contains at least $g$ points that are
$\frac 1 N$ separated.

By our assumption for any $\epsilon>0$ and all large enough $N$ the
set $\{x_1,x_2,...,x_N\} \cap J$ contains at least $N (d
\Leb(J)-\epsilon)$ points. Let $M \geq \frac 12 {d \Leb(J)}$. For large
enough $r$ the set $\{x_1,...,x_{M^{r}}\}$ contains at least $M^r(d
\Leb(J)-\epsilon)$ points that are $\frac 1 {M^r}$ separated. Thus
$\{x_{M^{r-1}},x_{M^{r-1}+1},...,x_{M^r}\}$ contains at least $M^r(d
\Leb(J)-\epsilon)-M^{r-1}\geq 4M^{r-1}-M^{r-1}$ points that are
$\frac 1 {M^r}$ separated. To deal with the various $\epsilon$ let
$c =\frac {d\Leb(J)} {6}$, $M=\frac {12} {d \Leb(J)}$ and $e=1$ and
apply Lemma \ref{key}.
\end{proof}
 This provides the following corollary.
\begin{cor} \label{link2} Given a finite union of intervals $J$, if there exists $d>0$ such that $\underset{N \to \infty}{\liminf} \, \Leb(\underset{i=1}{\overset{N}{\cup}} B(x_i, \frac 1 N) \cap J) \geq d \Leb(J)$
then
$\Leb(\Limsup B(x_n, a_n) \cap J)>\constant d\Leb(J)$.  \mbc{What?}
\end{cor}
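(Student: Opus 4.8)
The plan is to bootstrap the single-interval statement of Lemma \ref{link} to a finite union of disjoint intervals by additivity of Lebesgue measure, handling the interaction between the pieces as lower-order error terms. First I would write the finite union as a disjoint union of intervals $J=\bigsqcup_{k=1}^{m} J_k$ (merging any that abut, so that the $J_k$ are separated, and discarding any of measure zero). The quantity $\Leb(\bigcup_{i=1}^N B(x_i,\tfrac1N)\cap J)$ splits as $\sum_{k=1}^m \Leb(\bigcup_{i=1}^N B(x_i,\tfrac1N)\cap J_k)$, so the global density hypothesis $\underset{N\to\infty}{\liminf}\,\Leb(\bigcup_{i=1}^N B(x_i,\tfrac1N)\cap J)\geq d\Leb(J)$ does not immediately force each individual piece to satisfy a matching density bound. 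This is the crux: a $\liminf$ of a sum can exceed $d\Leb(J)$ while one summand repeatedly dips low along a subsequence, so I cannot simply apply Lemma \ref{link} termwise and add.

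To get around this I would argue by contradiction and a pigeonhole/counting refinement. The clean route is to observe that Lemma \ref{link} in fact only uses the hypothesis to extract, for each piece $J_k$, a supply of $\tfrac{1}{M^r}$-separated points of $\{x_{M^{r-1}},\dots,x_{M^r}\}$ lying in $J_k$, and then invokes the \emph{local} Corollary \ref{loc key}. So rather than summing the conclusions of Lemma \ref{link}, I would run the separated-point extraction of Lemma \ref{link} on the whole set $J$ at once: the density hypothesis yields, for a common $M$ (say $M=\tfrac{12}{d\Leb(J)}$) and all large $r$, at least $M^r(d\Leb(J)-\epsilon)-M^{r-1}$ points of $\{x_{M^{r-1}},\dots,x_{M^r}\}$ that are $\tfrac1{M^r}$-separated and lie in $J$. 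Because $J$ has measure $\Leb(J)$, this is $\geq c\Leb(J)M^r$ separated points in $J$ with $c=\tfrac{d}{6}$ and $e=1$, which is exactly the hypothesis of Corollary \ref{loc key} applied to the union $J$. The covering/counting estimate in Lemma \ref{key} never used connectedness of the target region, only that a $\tfrac{e}{2M^j}$-neighborhood of the target of measure $\Leb(J)$ can contain at most $O(\Leb(J)M^j)$ separated points; a finite disjoint union obeys the same bound up to an additive constant $2m$ absorbed by enlarging $M$ to a power.

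Concretely, the key steps in order are: (1) reduce to $J$ a disjoint union of $m$ separated intervals, absorbing the at-most-$2m$ boundary balls $B(x_i,\tfrac1N)$ that straddle $\partial J$ into an error term, exactly as the single-interval proof absorbs its $2$ boundary balls; (2) extract $c\Leb(J)M^r$ separated points lying in $J$ from the global density hypothesis, choosing $M$ large enough (a power of the naive choice) that the additive boundary count $2m$ is negligible against $cM^{r-1}$; (3) apply Corollary \ref{loc key} with these $c,e,M$ to conclude $\Leb(\Limsup B(x_n,a_n)\cap J)>\delta\Leb(J)$ with $\delta=\constant\, d$. The main obstacle I anticipate is step (2): verifying that the boundary effects across $m$ intervals really are lower order, i.e.\ that the $2m$ discarded straddling balls and the possibility that the $\liminf$ mass is distributed unevenly among the $J_k$ do not prevent extraction of the full $c\Leb(J)M^r$ separated points in $J$. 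This is resolved by noting the extraction is performed on $J$ as a single set rather than piece by piece, so uneven distribution is irrelevant, and by choosing $M$ a large power so that $2m \ll cM^{r-1}$; the constant $\constant=\tfrac{1}{12}$ is then inherited verbatim from Lemma \ref{link}.
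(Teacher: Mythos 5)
Your proposal is correct and follows what the paper intends: the paper gives no written proof (it simply asserts the corollary after Lemma \ref{link}), and the implicit argument is exactly yours, namely that the separated-point extraction and the counting in Lemma \ref{key}/Corollary \ref{loc key} never use connectedness of $J$, so Lemma \ref{link}'s proof runs verbatim on the union with the $2$ boundary balls replaced by $2m$. You also correctly identify the one genuine subtlety -- that the $\liminf$ hypothesis on $J$ does not localize to the individual components, so one cannot apply Lemma \ref{link} piecewise and sum -- which is precisely why the extraction must be performed on $J$ as a single set.
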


We now use Lemma \ref{link} to prove Theorem \ref{suff}.
\begin{proof}[Proof of Theorem \ref{suff}] Corollary \ref{link2} shows that every point $y$ satisfies
\[
\tfrac1{2r}\,\Leb(\Limsup B(x_n, a_n) \cap B(y,r))>\constant d>0.
\]
 This implies that $\Leb(\Limsup B(x_n, a_n))=1$ because its complement has no density points.
\end{proof}

\begin{rem} This proof also show that a local version of Theorem \ref{suff} holds. To be exact, let $$f(z):=\underset{ r \to 0^+}{\limsup} \ \underset{N \to \infty}{\liminf} \ \tfrac{\Leb(\underset{i=1}{\overset{N}{\cup}}B(x_i, \frac 1 N) \cap B(z,r))}{ \Leb(B(z,r))}.$$ If $f(z)>0$ for almost every $z$ then $\bf{x}$ is BC.
\end{rem}

Using this remark one can construct sequences that are BC for $\mathbb{R}$.



\begin{proof}[Proof of Theorem \ref{nec suff}]
Assume $\bf{x}$ is not BC. So, there exists a standard sequence $\bf{a}$ so that $\Leb((\Limsup B(x_n, a_n))^c)>0$. We define the following sets.
\begin{center}
$S=\Limsup B(x_n, a_n)$\\
$R_{t,\delta}=\{y \in S^c: \Leb(B(y, \delta') \cap S^c)>2\delta'  t, \ \forall \, \delta'<\delta \}$.
\end{center}
 By the Lebesgue density theorem, $\Leb(S^c \cap (\underset{n=1}{\overset{\infty}{\cup}}R_{t,\frac 1 n}))=\Leb(S^c)$ for any $t<1$.
Choose $\delta$ small enough so that $R_{\frac{999}{1000},\delta}\neq \emptyset$. Let $y_1 \in R_{\frac{999}{1000},\delta}$.
By Lemma \ref{link} there exist infinitely many $N$ such that
\[
\constant \Leb(\underset{i=1}{\overset{N}{\cup}} B(x_i, \tfrac 1 N) \cap B(y_1, \delta)) \leq \tfrac1{1000}\,\Leb( B(y_1, \delta)).
\]
Pick one and denote it $N_1$. We now cover most of $B(y_1,\delta)$.

 There exist points, $y_2^{(1)},y_2^{(2)},...,y_2^{(t_2)}$ and corresponding radii, $r_2^{(1)},r_2^{(2)},...,r_2^{(t_2)}$ such that:
\begin{enumerate}
\item $y_2^{(i)} \in R_{\frac{9999}{10000},r_2^{(i)}}$.
\item $B(y_2^{(i)}, r_2^{(i)}) \subset B(y_1, \delta)$ for all $i$.
\item $\Leb(B(y_1, \delta) \cap (\underset{i=1}{\overset{t_2}{\cup}}B(y_2^{(i)},r_2^{(i)})))>\frac{99}{100}\cdot 2 \delta $.
\item The $B(y_2^{(i)}, r_2^{(i)})$ are all disjoint.
\end{enumerate}
Here is the justification. First notice that
$$\underset{n \to \infty}{\lim} \Leb\left(S^c \cap B(y_1, \delta) \cap R_{t,\epsilon}\right)=\Leb\left(S^c \cap B(y_1,\delta)\right)\geq (1-10^{-3})2\delta$$ and therefore
\[
\Leb \big( B(y_1,\delta) \backslash \underset{n=1}{\overset{\infty}{\cup}}\underset{y \in R_{1-\epsilon,\frac 1 n},B(y,\frac 1 n) \subset B(y_1,\delta)}{\cup}B(y,\tfrac 1 n)\big)\leq (1-10^{-3})2\delta.
\]
 By Theorem \ref{cover} (which gives disjointness of $B(y_2^{(i)},r_2^{(i)})$) it is possible to cover $B(y_1,\delta)$ up to a set of measure $10^{-3}2\delta$ by a countable  number of $B(y_2^{(i)},r_2^{(i)})$ satisfying Conditions 1-4.
 Therefore we can cover all but a set measure $1-10^{-2}$ of $B(y_1,\delta) \cap S^c$ by a finite number of $B(y_2^{(i)},r_2^{(i)})$ satisfying Conditions 1-4.

 By Condition 1 and Corollary \ref{link2} the union of these balls can not have
\[
\constant \Leb(\underset{i=1}{\overset{N}{\cup}} B(x_i, \tfrac 1 N) \cap \underset{j=1}{\overset{t_2}{\cup}} B(y_2 ^{(j)}, r_2^{(j)}))>
10^{-4} \Leb(\underset{j=1}{\overset{t_2}{\cup}} B(y_2 ^{(j)}, r_2^{(j)}))   
\]
 for all but finitely many $N$.
 This implies that for infinitely many $N$, $\frac{99}{100}$ of the measure of the $\underset{j=1}{\overset{t_2}{\cup}}B(y_2^{(j)}, r_2^{(j)})$ have
 \[
 \constant \Leb(\underset{i=1}{\overset{N}{\cup}} B(x_i, \frac 1 N)
 \cap B(y_2^{(j)}, r_2^{(j)}))<10^{-4}\cdot10^{2}\cdot2 \Leb(B(y_2^{(i)}, r_2^{(j)}))
 \]
 at the same time as individual balls. (The constant 2 really depends on how closely we can divide up the measure of my space into balls. One could chose it arbitrarily close to 1.)
 Pick one of these times $N_2$, and the corresponding collection $\mathcal{U}_2$.
  Notice, for any $z \in B(y_2^{(i)},(1-\frac 1 {16}) r_2^{(i)})$ where $i \in \mathcal{U}_2$ we have
\[
\Leb(\underset{j=1}{\overset{N_2}{\cup}}B(x_j, \frac 1 {N_2})\cap B(z, r_2^{(i)}-|z-y_2^{(i)}|))< \constantinv\cdot16\cdot10^{-4}\cdot100\cdot2 \Leb(B(z, r_2^{(i)}-|z-y_2^{(i)}|)).
\]
This follows in the worst case scenario,
\[
\underset{j=1}{\overset{N_2}{\cup}}B(x_j, \frac 1 {N_2}) \cap B(y_2^{(i)}, r_2^{(i)}) \subset B(z, r_2^{(i)}-|y_2^{(i)}-z|).
\]

Also, by condition 3, $\Leb(\underset{i=1}{\overset{N_2}{\cup}} B(x_i, \frac 1 N_2) \cap B(y_1, \delta))>10^{-2}+\constantinv\cdot10^{-4}\cdot2\delta $.

 We now proceed inductively choosing $t_k$ points, $y_k^{(1)},...,y_k^{(t_k)}$ with corresponding radii, $r_k^{(1)},...,r_k^{(t_k)}$ such that:
\begin{enumerate}
\item $y_k^{(i)} \in R_{1-10^{-2k},r_k^{(i)}}$.
\item $B(y_k^{(j)}, r_k^{(j)}) \subset \underset{i=1}{\overset{t_{k-1}}{\cup}} B(y_{k-1}^{(i)}, r_{k-1}^{(i)})$ for all $j$.
\item $\Leb(B(y_{k-1}^{(i)}, r_{k-1}^{(i)}) \cap (\underset{i=1}{\overset{t_k}{\cup}}B(y_k^{(i)},r_k^{(i)})))>(1-10^{-k})\Leb(  B(y_{k-1}^{(i)}, r_{k-1}^{(i)}))$, \\ for each $y_{k-1}^{(i)}$.
\item The $B(y_k^{(i)}, r_k^{(i)})$ are all disjoint.
\end{enumerate}
 This follows analogously to the argument for the existence of $y_2^{(i)},r_2^{(i)}$ satisfying Conditions 1-4 above.

By Condition 1 and Corollary \ref{link2} the union of these balls can not have
$$\constant \Leb(\underset{i=1}{\overset{N}{\cup}} B(x_i, \tfrac 1 N) \cap \underset{j=1}{\overset{t_k}{\cup}}B(y_k^{(j)},r_k^{(j)}))>10^{-2k}\cdot\Leb(\underset{j=1}{\overset{t_k}{\cup}}B(y_k^{(j)},r_k^{(j)}))$$
 for all but finitely many $N$.
 This implies that for infinitely many $N$, $(1-10^{-k})$ of the measure of $B(y_k^{(j)}, r_k^{(j)})$ have
\[
\Leb(\underset{i=1}{\overset{N}{\cup}} B(x_i, \tfrac 1 N) \cap B(y_k^{(j)}, r_k^{(j)}))<\constantinv\cdot10^{-2k}\cdot10^k\cdot2 \Leb(B(y_k^{(j)}, r_k^{(j)}))
\]
\noindent at the same time as individual balls. (As before, the constant 2 depends on how closely we can divide up the measure.) Pick one of these times $N_k$, and the corresponding collection $\mathcal{U}_k$.
 Notice, for any $z \in B(y_k^{(i)},(1-4^{-k}) r_k^{(i)})$ where $y_k^{(i)} \in \mathcal{U}_k$ we have
\vspace{-1mm}
\begin{equation}\label{eq1}
\begin{split}
 \Leb(\underset{j=1}{\overset{N_k}{\cup}} B(x_j, \tfrac 1 {N_k})\cap & B(z, r_k^{(i)}-|z-y_k^{(i)}|))<\\
 &<4^k\cdot\constantinv\cdot10^{-2k}\cdot10^k\cdot 2(r_k^{(i)}-|z-y_k^{(i)}|).
\end{split}
\end{equation}
This follows in the worst case scenario, $$\underset{j=1}{\overset{N_k}{\cup}}B(x_j, \tfrac 1 {N_k}) \cap B(y_k^{(i)},r_i^{(i)}) \subset  B(z, r_k^{(i)}-|z-y_k^{(i)}|).$$

Choose $A=\{N_1,N_2,...\}$. We will show,
$$f_A^{-1}(0) \supset \underset{r=1}{\overset{\infty}{\cup}}
\underset{k=r}{\overset{\infty}{\cap}}\underset{i \in \mathcal{U}_k}{\cup}B(y_k^{(i)}, (1-4^{-k})r_k^{(i)}).$$
 This has positive measure because at each step at most $10^{-k}$ of the measure is eliminated by the choice of $y_k^{(i)}$, $10^{-k}$ is avoided by the choice of $\mathcal{U}_k$ and $4^{-k}$ is avoided by the excluded annuli. If $z \in \underset{r=1}{\overset{\infty}{\cup}}
\underset{k=r}{\overset{\infty}{\cap}}\underset{i=1}{\overset{t_k}{\cup}}B(y_k^{(i)}, (1-4^{-k})r_k^{(i)})$, then for all sufficiently large $k$, there exists $i$ such that $|y_k^{(i)}-z|<(1-4^{-k})r_k^{(i)}$. A sequence of radii tending to zero is given by $r_k^{(i)}-|y_k^{(i)}-z|$. The following lemma and its corollary show that $f_A(z)=0$ in this case.
\begin{lem} Given $\epsilon>0$ there exists $m_{\epsilon}$ such that for all $m>m_{\epsilon}$ 
\[
\Leb(\underset{i=1}{\overset{N_m}{\cup}}B(x_i, \tfrac1{N_m}) \cap B(y_k^{(i)},r_k^{(i)}))<
(\tfrac {10} {9}\cdot10^{-k}+\epsilon)\,2r_k^{(i)}.
\]
\end{lem}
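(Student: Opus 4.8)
The plan is to fix the level-$k$ ball $B=B(y_k^{(i)},r_k^{(i)})$ (for the given index $i$) and estimate its intersection with $\underset{i=1}{\overset{N_m}{\cup}}B(x_i,\tfrac1{N_m})$ by refining $B$ through the finer level-$m$ decomposition. The reason a detour is needed is that the construction, via \eqref{eq1} and Corollary \ref{link2}, controls $\underset{i=1}{\overset{N_m}{\cup}}B(x_i,\tfrac1{N_m})$ directly only against balls of the \emph{same} level $m$, and not against the coarser ball $B$. So for $m>k$ I would split $B$ into three parts: the level-$m$ balls lying in $\mathcal{U}_m$ and contained in $B$, the level-$m$ balls not in $\mathcal{U}_m$ and contained in $B$, and the remainder $U$ of $B$ left uncovered by level-$m$ balls. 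Then $\Leb(\underset{i=1}{\overset{N_m}{\cup}}B(x_i,\tfrac1{N_m})\cap B)$ is at most the sum of the three corresponding intersection measures, and I would bound each separately.

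For the good balls, the inductive estimate built into the construction gives $\Leb(\underset{i=1}{\overset{N_m}{\cup}}B(x_i,\tfrac1{N_m})\cap B(y_m^{(j)},r_m^{(j)}))<\constantinv\cdot10^{-m}\cdot 2r_m^{(j)}$ for each $j\in\mathcal{U}_m$; summing over these disjoint balls (Condition 4) and using $\sum 2r_m^{(j)}\leq \Leb(B)=2r_k^{(i)}$ bounds their contribution by $\constantinv\cdot10^{-m}\cdot 2r_k^{(i)}$. For the bad balls, their total measure is at most $10^{-m}$ times the total level-$m$ measure, hence at most $10^{-m}\cdot 2\delta$, and the intersection cannot exceed this. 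Both bounds tend to $0$ as $m\to\infty$ with $k$ (and hence $r_k^{(i)}$ and $\delta$) held fixed, so choosing $m_\epsilon$ large enough (depending on $\epsilon$, $k$, $r_k^{(i)}$, $\delta$) makes their sum smaller than $\epsilon\cdot 2r_k^{(i)}$ for all $m>m_\epsilon$.

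For the uncovered remainder, iterating Condition 3 from level $k$ down to level $m$ shows that the part of $B$ not covered by level-$m$ balls has measure at most $\big(\sum_{l=k+1}^{m}10^{-l}\big)\Leb(B)\leq \tfrac{10}{9}\,10^{-k}\cdot 2r_k^{(i)}$, and the intersection of $\underset{i=1}{\overset{N_m}{\cup}}B(x_i,\tfrac1{N_m})$ with $U$ is at most $\Leb(U)$. Adding the three contributions yields, for all $m>m_\epsilon$, the asserted bound $(\tfrac{10}{9}\cdot10^{-k}+\epsilon)\,2r_k^{(i)}$.

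The main obstacle is conceptual rather than computational: the time $N_m$ is tuned to level $m$, while the target ball $B$ lives at the coarser level $k<m$, and the refinement into level-$m$ balls is exactly what bridges this mismatch. The price of the refinement is the uncovered remainder $U$, whose contribution is an $m$-\emph{independent} term of order $10^{-k}$ coming from the accumulated covering losses of Condition 3; it does not vanish as $m\to\infty$, only as $k\to\infty$. This is precisely why the stated bound splits into a fixed geometric term $\tfrac{10}{9}\,10^{-k}$ and an arbitrarily small $\epsilon$ absorbing the good- and bad-ball contributions. The one point requiring care is that $\mathcal{U}_m$ is a \emph{global} collection, so restricting to the single fixed parent $B$ must still leave a vanishing bad-ball contribution; this holds because $k$ and $r_k^{(i)}$ are fixed while $m\to\infty$, so $10^{-m}\cdot 2\delta$ is negligible relative to $2r_k^{(i)}$.
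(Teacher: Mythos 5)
Your proposal is correct and follows essentially the same route as the paper's own proof: decompose $B(y_k^{(i)},r_k^{(i)})$ into the level-$m$ balls of $\mathcal{U}_k$'s refinement that satisfy the individual bound, the excluded balls (measure at most $10^{-m}$ of the total), and the uncovered remainder controlled by iterating Condition 3, with only the last term surviving as the $\tfrac{10}{9}\cdot 10^{-k}$ contribution. The paper states exactly this three-part estimate, merely more tersely.
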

\begin{proof} This follows from the fact that Condition 3 implies
$$ \Leb((\underset{i=1}{\overset{t_m}{\cup}} B(y_m^{(i)},r_m^{(i)}))\cap B(y_k^{(i)},r_k^{(i)}))>(1-\tfrac {10}9\cdot10^{-k})2r_k^{(i)}$$
 for any $m>k$
 and that for large enough $m$, $\constantinv\cdot10^{-m}\cdot2\cdot2r_k^{(i)}+10^{-m}<2\epsilon r_k^{(i)}$ (notice that $10^{-m}$ is greater than or equal to what the choice of $\mathcal{U}_m$ excludes). The lemma follows from Equation \ref{eq1} and assuming that the portion of $B(y_k^{(i)},r_k^{(i)})$ not covered by $B(y_m^{(1)},r_m^{(1)}),...,B(y_m^{(t_m)},r_m^{(t_m)})$ is as large as possible.
\end{proof}
The following corollary is immediate.
\begin{cor} \label{small ball} If $z \in B(y_k^{(i)},(1-4^{-k})r_k^{(i)})$ then for all sufficiently large $m$
\[
\Leb(\underset{i=1}{\overset{N_m}{\cup}}B(x_i, \tfrac 1 {N_m}) \cap B(z,r_k^{(i)}-|z-y_k^{(i)}|))<4^k( \tfrac {10} 9 (10^{-k}+\epsilon))2(r_k^{(i)}-|z-y_k^{(i)}|).
\]
\end{cor}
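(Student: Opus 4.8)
The plan is to deduce this corollary from the preceding Lemma by a single monotonicity step followed by a comparison of radii, so the argument is essentially bookkeeping. First I would record the geometric inclusion already invoked as the ``worst case scenario'' just before the Lemma: writing \(\rho:=r_k^{(i)}-|z-y_k^{(i)}|\) and assuming \(z\in B(y_k^{(i)},(1-4^{-k})r_k^{(i)})\), any \(w\in B(z,\rho)\) satisfies \(|w-y_k^{(i)}|\le|w-z|+|z-y_k^{(i)}|<\rho+|z-y_k^{(i)}|=r_k^{(i)}\). Hence \(B(z,\rho)\subset B(y_k^{(i)},r_k^{(i)})\).

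Next, monotonicity of \(\Leb\) together with the inclusion gives
\[
\Leb\big(\textstyle\bigcup_{j=1}^{N_m}B(x_j,\tfrac1{N_m})\cap B(z,\rho)\big)\le\Leb\big(\textstyle\bigcup_{j=1}^{N_m}B(x_j,\tfrac1{N_m})\cap B(y_k^{(i)},r_k^{(i)})\big).
\]
Applying the Lemma to the right-hand side, for every \(m\) exceeding the threshold \(m_\epsilon\) supplied there, this is at most \((\tfrac{10}{9}\cdot10^{-k}+\epsilon)\,2r_k^{(i)}\).

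Finally I would convert the radius \(r_k^{(i)}\) into \(\rho\). The hypothesis \(z\in B(y_k^{(i)},(1-4^{-k})r_k^{(i)})\) means \(|z-y_k^{(i)}|<(1-4^{-k})r_k^{(i)}\), so \(\rho=r_k^{(i)}-|z-y_k^{(i)}|>4^{-k}r_k^{(i)}\), that is \(r_k^{(i)}<4^k\rho\). Substituting and absorbing the additive \(\epsilon\) into the factor \(\tfrac{10}{9}\) (using \(\epsilon\le\tfrac{10}{9}\epsilon\)) yields
\[
(\tfrac{10}{9}\cdot10^{-k}+\epsilon)\,2r_k^{(i)}<4^k\big(\tfrac{10}{9}(10^{-k}+\epsilon)\big)\,2\rho,
\]
which is precisely the asserted bound since \(\rho=r_k^{(i)}-|z-y_k^{(i)}|\).

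There is no genuine obstacle here, which is why the corollary is billed as immediate: the only points to watch are that the threshold \(m_\epsilon\) is the uniform one already produced by the Lemma, and that one must keep the small slack in the \(\epsilon\)-term when passing from \(2r_k^{(i)}\) to \(2\cdot4^k\rho\). I therefore expect the final write-up to occupy no more than these few lines.
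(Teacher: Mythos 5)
Your proposal is correct and is exactly the deduction the paper has in mind when it calls the corollary ``immediate'': the inclusion $B(z,r_k^{(i)}-|z-y_k^{(i)}|)\subset B(y_k^{(i)},r_k^{(i)})$, the bound from the preceding lemma, and the radius comparison $r_k^{(i)}<4^k(r_k^{(i)}-|z-y_k^{(i)}|)$ coming from $|z-y_k^{(i)}|<(1-4^{-k})r_k^{(i)}$. No issues.
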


\vspace{2mm}

Other direction: Assume there exist $A=\{N_1,N_2,...\}$ such that $\Leb{(f_A^{-1}\{0\})>0}$.
 By definition of $f_A$, for each $y \in f_A^{-1}\{0\}$ there exist $r_i(y)$ such that
$$\Leb(\underset{j=1}{\overset{N_k}{\cup}}B(x_j, \tfrac 1 {N_k}) \cap B(y,r_i(y)))<4^{-i} 2r_i(y)$$
 for all $k>k_i(y)$.
 There exists $k_i$ such that for a set $\mathcal{V}_i \subset f^{-1}\{0\}$ with $\Leb(\mathcal{V}_i)>(1-10^{-i})\Leb(f^{-1}\{0\})$ we have that $y \in \mathcal{V}_i$ implies $k_i(y)<k_i$. The sequence $a_i$ is defined by $a_i=\frac 1 {N_{k_j}}$ for $N_{k_{j-1}}<i\leq N_{k_j}$. By our choice of $a_1,a_2,...$ we have that  $\underset{i=1}{\overset{\infty}{\cap}}\mathcal{V}_i$ are not density points for $\underset{r=1}{\overset{\infty}{\cap}}
\underset{i=r}{\overset{\infty}{\cup}}B(x_i,a_i)$. Also $\Leb(\underset{i=1}{\overset{\infty}{\cap}}\mathcal{V}_i) \geq \frac 8 9 \Leb(f^{-1}\{0\})$.
\end{proof}
\begin{rem} The conditions imposed throughout the proof are by no means optimal. Additionally, easier conditions are possible in this case (or the case of
$\mathbb{R}^k$). However, the conditions of this proof generalize to the Ahlfors regular case.
\end{rem}
\begin{rem} Given a standard sequence $\bf a$, one can modify the  argument to find $A$  such that $$\Leb(f_A^{-1}\{0\})>(1-\epsilon) \Leb(\Limsup B(x_n, a_n)).$$ Likewise, given $A$ one can modify the argument to find  a standard sequence $\bf a$ such that $\Leb(\Limsup B(x_n, a_n))> (1- \epsilon)\Leb (f_A^{-1} \{0\}) $.
\end{rem}

\section{Generalizing}

We now generalize to another setting. The results are parallel to the case of $[0,1)$ with Lebesgue measure.

\begin{Bob} A complete metric space $(X,d)$ with measure $\mu$ is called \emph{Ahlfors regular of dimension $\omega$} if there exists a constant $C$ such that for all $y$ and $r \geq 0$, $C^{-1}r^{\omega}<\mu(B(y,r))<Cr^{\omega}$.
\end{Bob}
 For further references on Ahlfors regular spaces see \cite{mattila} or \cite{frac frac}.

\begin{ex} \label{lin rec} A linear recurrent subshift on a finite alphabet (see Example \ref{lin rec IET} or \cite{lin ref}), with metric $\bar{d}(\bf{x},\bf{y})$=$(1+\min\{i: x_i \neq y_i\})^{-1}$ and measure $\mu$ given by the unique measure that the shift map is ergodic under is an Ahlfors regular space (of dimension 1). All minimal substitution dynamical systems are linear recurrent.  \mbc{reference?}
\end{ex}

\begin{ex} Hausdorff $\frac{\log 2}{\log 3}$ measure on the middle thirds cantor set is Ahlfors regular of dimension $\frac{\log 2}{\log 3}$ with respect to the usual metric on \ui.  \mbc{ NO!}
\end{ex}
\begin{Bob}Let $(X,d,\mu)$ be an $\omega$ Ahlfors regular metric space. A sequence ${\bf x}=\{x_n\}$ in $X$ is called Borel-Cantelli (in $X$)
if for any standard sequence $\bf a$
\[
{\mu(\underset{k=1}{\overset{\infty}{\cap}} \underset{n=k}{\overset{\infty}{\cup}} B(x_n, a_n^{\frac 1 {\omega}}))=\mu(X)}.
\]
\end{Bob}
\begin{rem} The Ahlfors regular condition ensures that $\underset{i=1}{\overset{\infty}{\sum}} \mu(B(x_i,a_i^{\frac 1 \omega}))=\infty$ \
if and only if \
$\underset{i=1}{\overset{\infty}{\sum}}a_i=\infty$.
\end{rem}


The following is a sufficient condition for a sequence to be BC in $X$. It is
the version of Theorem \ref{suff} in this (more general) setting.
It is also used in the proof of the necessary and sufficient condition (Theorem \ref{nec suff gen}) in this setting.
\begin{thm}\label{suff gen}
 If\, $\bf{x}$ is a sequence in $X$ and there exists $d>0$ such that for every ball $J$
 the inequality
 \[
 \underset{N \to \infty}{\liminf} \, \mu(\underset{i=1}{\overset{N}{\cup}} B(x_i,( \frac 1 N)^{\frac 1 w}) \cap J) \geq d \mu(J)
 \]
 holds, then $\bf{x}$ is BC in $X$.
\end{thm}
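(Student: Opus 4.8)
The plan is to mirror the proof of Theorem \ref{suff} in the Ahlfors regular setting, replacing the length function by the measure $\mu$ and tracking the exponent $\frac{1}{\omega}$ throughout. First I would re-run the argument of Lemma \ref{link} in this generality: starting from the hypothesis $\liminf_{N\to\infty}\mu(\cup_{i=1}^{N}B(x_i,(\tfrac1N)^{1/\omega})\cap J)\geq d\,\mu(J)$, I would extract a lower bound on the number of $(\tfrac1{M^r})^{1/\omega}$-separated points of $\{x_1,\dots,x_{M^r}\}$ lying in $J$. The key geometric input is the Ahlfors regularity estimate: a ball of radius $(\tfrac1N)^{1/\omega}$ has measure comparable to $\frac{C^{\pm1}}{N}$, and a cluster of points with no two $(\tfrac1N)^{1/\omega}$-separated is contained in a ball whose measure is $O(\frac1N)$ with constant depending only on $C$. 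This replaces the elementary ``interval of length $\frac1N$'' bound used in the $\ui$ case. I would then feed the separated points into a version of the key Lemma \ref{key} adapted to $X$.

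The second step is thus to establish the Ahlfors regular analogue of Lemma \ref{key}: if for each $r$ at least $cM^r$ of the points $\{x_{M^{r-1}},\dots,x_{M^r}\}$ are $(\tfrac{e}{M^r})^{1/\omega}$-separated, then $\mu(\Limsup B(x_n,a_n^{1/\omega}))>\delta$ with $\delta=\delta(c,e,C)$ independent of the standard sequence $\bf a$. The proof structure is unchanged: pass to $b_i=\min\{a_{M^j},\frac{e}{2M^j}\}$ on dyadic-type blocks $M^{j-1}\le i<M^j$, use Lemma \ref{simple} to keep $\sum b_i=\infty$, and count how many separated balls in block $j$ can overlap the region already covered by earlier blocks. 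The counting step now uses regularity twice: the measure of each new ball $B(x_i,b_i^{1/\omega})$ is bounded below by $C^{-1}b_i$, and the number of separated points a fixed covered region can absorb is bounded above using the upper regularity bound. The arithmetic constants shift by factors of $C$ but the divergence of $\sum_j (\tfrac c2-\tfrac1M)\sum_{i\in\text{block }j} b_i$ persists, giving the uniform lower bound $\delta$.

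With the local corollary (the analogue of Lemma \ref{link}, yielding $\mu(\Limsup B(x_n,a_n^{1/\omega})\cap J)>c'\,\mu(J)$ for every ball $J$, where $c'$ depends only on $d$ and $C$) in hand, I would finish exactly as in the proof of Theorem \ref{suff}: the bound holds for every ball $J=B(y,r)$, so for every $y\in X$ we get $\frac{\mu(\Limsup B(x_n,a_n^{1/\omega})\cap B(y,r))}{\mu(B(y,r))}>c'>0$, uniformly in $r$. Hence the complement of $\Limsup B(x_n,a_n^{1/\omega})$ has no density points, and by the Lebesgue density theorem for Ahlfors regular spaces (valid since such spaces are doubling, hence satisfy the Besicovitch-type differentiation theorem) the complement is $\mu$-null, i.e. $\mu(\Limsup B(x_n,a_n^{1/\omega}))=\mu(X)$.

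I expect the main obstacle to be the density-point conclusion: unlike in $\ui$, where the Vitali covering theorem and the ordinary Lebesgue density theorem are immediate, here one must invoke a differentiation theorem valid in Ahlfors regular (doubling) metric measure spaces. The constants throughout also require care, since the two-sided regularity bound $C^{-1}r^\omega<\mu(B(y,r))<Cr^\omega$ introduces a factor of $C$ at each comparison between cardinality and measure; one must check these do not degrade $\delta$ to $0$. Since each comparison costs only a fixed power of $C$ and the underlying divergence argument is unaffected, the bound survives, but verifying the covering/differentiation step carefully is the crux.
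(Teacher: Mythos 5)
Your proposal follows essentially the same route as the paper: an Ahlfors-regular analogue of the key counting lemma (the paper's Lemma \ref{Lemma 2 general}), a linking lemma converting the measure hypothesis into a supply of $(\tfrac{1}{M^r})^{1/\omega}$-separated points via the two-sided regularity bounds (the paper's Lemma \ref{link gen} and Corollary \ref{link2 gen}), and a conclusion by the density theorem for doubling spaces (the paper's Theorem \ref{density}). The constants differ cosmetically (the paper truncates the radii at $\tfrac14(\tfrac{e}{M^j})^{1/\omega}$ rather than your $\tfrac{e}{2M^j}$), but the structure and all essential ideas coincide.
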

We defer the proof of this theorem to later in the section, after Remark \ref{alph inv}.
\begin{rem} This result also follows from V. Beresnevich, D. Dickson and S. Velani's Corollary 2 in \cite{BDV} with $u(n)=2^n$, $l_n=2^{n-1}$, $\delta=\omega$, $\gamma=0$, $\rho(r)=(\frac 1 {r})^{\frac 1 {\omega}}$. Its proof is included for completeness and the fact that it is used in the proof of Theorem \ref{nec suff gen}.
\end{rem}
\begin{ex} \label{lin rec bc} The systems in Example \ref{lin rec} are ABC
(have the property that all forward orbits are BC).
\end{ex}
\begin{ex} The endpoints of the middle thirds cantor set $K$ (that is, the one sided limit points)
enumerated by increasing denominator form a BC sequence (for~$K$).
\end{ex}
Next we provide a sufficient condition for a sequence not to be Borel-Cantelli.
It is this setting's version of Theorem \ref{nec}.
\begin{thm}If there exists a ball $J$ such that for every $\epsilon>0$ there exists arbitrarily large $N_{\epsilon}$ with $\mu(\underset{i=1}{\overset{N_{\epsilon}}{\cup}} B(x_i, (\frac 1 {N_{\epsilon}})^{\frac 1 {\omega}}) \cap J)< \epsilon \mu (J)$ then $\bf{x}$ is not BC.
\label{nec gen}
\end{thm}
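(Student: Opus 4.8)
The plan is to mirror the proof of Theorem~\ref{nec}, adapting the radii and the measure to the Ahlfors regular setting, with one essential change: instead of trying to bound a sum of individual ball measures, I would apply the Borel--Cantelli Theorem block-wise to unions. Write $M_i = N_{2^{-i}}$ for the integers supplied by the hypothesis with $\epsilon = 2^{-i}$. Since the $N_\epsilon$ may be taken arbitrarily large, I may assume $M_1 < M_2 < \cdots$ and moreover $M_{i+1} \geq 2M_i$ for every $i$. I then define the candidate sequence by $a_n = \tfrac{1}{M_{i+1}}$ for $M_i \leq n < M_{i+1}$ (and $a_n = \tfrac{1}{M_1}$ for $n < M_1$). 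By construction $\bf a$ is positive, non-increasing, and tends to $0$; on the $i$-th block its terms sum to $(M_{i+1}-M_i)/M_{i+1} = 1 - M_i/M_{i+1} \geq \tfrac12$, so $\sum_n a_n = \infty$ and $\bf a$ is standard.

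For the key estimate I would set $E_i = \bigcup_{M_i \leq n < M_{i+1}} B\big(x_n, a_n^{1/\omega}\big) \cap J$. On the $i$-th block every radius equals $(1/M_{i+1})^{1/\omega}$ and the indices involved all lie in $\{1,\dots,M_{i+1}\}$, so $E_i \subseteq \bigcup_{n=1}^{M_{i+1}} B\big(x_n, (1/M_{i+1})^{1/\omega}\big) \cap J$. Applying the hypothesis with $\epsilon = 2^{-(i+1)}$ and $N_\epsilon = M_{i+1}$ gives $\mu(E_i) < 2^{-(i+1)}\mu(J)$, whence $\sum_i \mu(E_i) < \mu(J) < \infty$. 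Since any point lying in $B(x_n, a_n^{1/\omega}) \cap J$ for infinitely many $n$ necessarily lies in $E_i$ for infinitely many $i$ (each block being finite), one has $\Limsup B(x_n, a_n^{1/\omega}) \cap J \subseteq \limsup_i E_i$, and the Borel--Cantelli Theorem yields $\mu\big(\Limsup B(x_n, a_n^{1/\omega}) \cap J\big) = 0$. As $\mu(J) > 0$, the limsup set omits a set of positive measure, so it is not of full measure and $\bf x$ is not BC.

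The step I expect to be the main obstacle, and the reason this is not a verbatim transcription of the proof of Theorem~\ref{nec}, is that one \emph{cannot} bound $\sum_n \mu\big(B(x_n, a_n^{1/\omega}) \cap J\big)$ directly. By Ahlfors regularity each ball on the $i$-th block has measure comparable to $(1/M_{i+1})^{1/\omega \cdot \omega} = 1/M_{i+1}$, there are up to $M_{i+1}$ of them, and coincident or tightly clustered points $x_n$ make the covering multiplicity unbounded, so this sum of individual measures generally diverges. The hypothesis controls only the measure of the \emph{union} of the relevant balls, which is precisely why the Borel--Cantelli Theorem must be applied to the block unions $E_i$ rather than to the individual balls; the geometric decay of $\epsilon = 2^{-i}$ across consecutive blocks is exactly what forces $\sum_i \mu(E_i)$ to converge.
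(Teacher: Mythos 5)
Your argument is correct and reaches the conclusion by essentially the route the paper intends: the paper proves Theorem~\ref{nec gen} simply by declaring it ``parallel to Theorem~\ref{nec}'', i.e.\ by constructing the piecewise-constant standard sequence $a_n=1/N_{2^{-(i+1)}}$ on consecutive blocks and applying the convergence half of the Borel--Cantelli theorem inside $J$. The one point where you genuinely depart from the written proof is the step you single out, and there your version is the right one. The proof of Theorem~\ref{nec} asserts that the hypothesis yields $\sum_{n}\lambda(B(x_n,a_n)\cap J)<\infty$ and applies Borel--Cantelli to the individual balls; but the hypothesis only bounds the measure of the \emph{union} of the relevant balls, and the sum of the individual measures can diverge even when the hypothesis holds (take $x_n\equiv 1/2$ in $[0,1)$, which satisfies the hypothesis with $J=[0,1)$ since $\bigcup_{i\le N}B(x_i,1/N)=B(1/2,1/N)$, while $\sum_n\lambda(B(x_n,a_n))=\sum_n 2a_n=\infty$ for every standard $\mathbf{a}$; the same phenomenon occurs in any Ahlfors regular space). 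Your block-wise application of Borel--Cantelli to $E_i=\bigcup_{M_i\le n<M_{i+1}}B(x_n,a_n^{1/\omega})\cap J$, combined with the inclusion of $\limsup_{n\to\infty} B(x_n,a_n^{1/\omega})\cap J$ in $\limsup_{i\to\infty} E_i$ --- valid because each block is finite --- is exactly the repair needed, and the geometric choice $\epsilon=2^{-i}$ is what makes $\sum_i\mu(E_i)\le\mu(J)<\infty$. The remaining details (monotonicity and divergence of $\mathbf{a}$ from $M_{i+1}\ge 2M_i$, the containment $E_i\subseteq\bigcup_{n=1}^{M_{i+1}}B(x_n,(1/M_{i+1})^{1/\omega})\cap J$, and $\mu(J)>0$ for a ball of positive radius in an Ahlfors regular space) all check out.
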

The proof is parallel to Theorem \ref{nec}.
\begin{Bob}Let $A=\{N_1,N_2,...\}$ be an infinite increasing sequence of natural numbers.
 Given $\bf{x}$, define $f_A(z):= \underset{ r \to 0^+}{\liminf} \, \underset{N \in A}{\limsup}
 \frac{\mu(\underset{i=1}{\overset{N}{\cup}}
 B(x_i, (\frac 1 N)^{\frac 1 {\omega}}) \cap B(z,r))} {\mu(B(z,r))}$.
\end{Bob}
\begin{ques} Is $f_A$ measurable? Note that $f_A^{-1}(0)$ is a measurable set.
\end{ques}
\begin{thm}  \label{nec suff gen} $\bf{x}$ is not BC if and only if $f_A^{-1}(0)$ contains a set of positive measure for some~$A$.
\end{thm}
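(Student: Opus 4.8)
The strategy is to transport the two directions of Theorem~\ref{nec suff} to the Ahlfors regular setting, using the radius rescaling $a_n \mapsto a_n^{1/\omega}$ that is built into the definition of BC in $X$. The point is that the entire argument in the unit interval was organized around the covering quantity $\mu(\bigcup_{i=1}^N B(x_i,(1/N)^{1/\omega}) \cap B(z,r))/\mu(B(z,r))$ and the density point structure of $\mu$, both of which survive in an $\omega$-regular space. First I would record the tools that replace their Euclidean counterparts: the Lebesgue density theorem holds in any Ahlfors regular space (these are in particular doubling, hence spaces of homogeneous type), so $\mu$-almost every point of a measurable set $E$ is a density point of $E$; and a Vitali-type covering theorem (the analogue of the \texttt{\textbackslash ref\{cover\}} result invoked in the $[0,1)$ proof) is available for doubling measures, giving countable disjoint subfamilies with the same covering efficiency up to constants depending only on $C$ and $\omega$. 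All estimates of the form $\mu(B(y,r)) \asymp r^\omega$ then replace the exact identity $\Leb(B(c,r))=2r$, at the cost of carrying the constant $C$ through; since the paper's constants are explicitly non-optimal (``\emph{constants are found, though they are not optimal}''), this causes no structural difficulty.

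\textbf{The easy direction.}
Suppose $\mu(f_A^{-1}(0))>0$ for some increasing $A=\{N_j\}$. I would mimic the ``Other direction'' paragraph of the proof of Theorem~\ref{nec suff} verbatim, with balls now measured by $\mu$. For each $y \in f_A^{-1}(0)$ and each $i$ there is a radius $r_i(y)$ and a threshold $k_i(y)$ beyond which the normalized covering mass of $\bigcup_{j=1}^{N_k} B(x_j,(1/N_k)^{1/\omega})$ on $B(y,r_i(y))$ is below $4^{-i}$; by pigeonholing the thresholds one extracts nested sets $\mathcal{V}_i$ of nearly full measure in $f_A^{-1}(0)$ on which $k_i(y)<k_i$ uniformly. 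Defining the standard sequence $a_i=1/N_{k_j}$ for $N_{k_{j-1}}<i\le N_{k_j}$ (exactly as in the interval case, and standard by Lemma~\ref{simple} together with the Ahlfors regularity remark ensuring $\sum \mu(B(x_i,a_i^{1/\omega}))=\infty \iff \sum a_i=\infty$), the points of $\bigcap_i \mathcal{V}_i$ fail to be density points of $\Limsup B(x_n,a_n^{1/\omega})$, so this limsup set omits a set of positive measure and $\bf{x}$ is not BC.

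\textbf{The hard direction, and the main obstacle.}
For the converse, assume $\bf{x}$ is not BC, fix a witnessing standard sequence $\bf{a}$ with $\mu((\Limsup B(x_n,a_n^{1/\omega}))^c)>0$, and reconstruct $A$ by the multiscale nested-covering scheme of the Theorem~\ref{nec suff} proof. One chooses a density point $y_1$ of $S^c$ at level close to $1$, uses Theorem~\ref{suff gen} (in the contrapositive, via the local covering estimate it yields) to find $N_1$ where the covering mass on $B(y_1,\delta)$ is small, then inductively covers each selected ball up to a small proportion by disjoint sub-balls $B(y_k^{(i)},r_k^{(i)})$ centered at high-order density points, selecting at each stage a frequency $N_k$ and a subcollection $\mathcal{U}_k$ on which the covering mass is simultaneously small both globally and ball-by-ball. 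The limit set $\bigcup_r \bigcap_{k\ge r}\bigcup_{i\in\mathcal{U}_k} B(y_k^{(i)},(1-4^{-k})r_k^{(i)})$ has positive measure and lies in $f_A^{-1}(0)$. The main obstacle is the geometric step where one transfers a covering estimate from a ball $B(y_k^{(i)},r_k^{(i)})$ to a concentric interior ball $B(z,r_k^{(i)}-|z-y_k^{(i)}|)$: in $[0,1)$ this used the exact proportionality of measure to radius (the factors $4^k$ and $16$ in Equation~\ref{eq1} and Corollary~\ref{small ball}), whereas in the $\omega$-regular setting the ratio $\mu(B(z,\rho))/\mu(B(y,r))$ is only controlled up to the constant $C$ and the exponent $\omega$. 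I would absorb this by restricting $z$ to a slightly smaller interior ball (replacing $1-4^{-k}$ by $1-c\,4^{-k}$ for a constant depending on $C,\omega$) so that $r_k^{(i)}-|z-y_k^{(i)}|$ stays comparable to $r_k^{(i)}$, keeping the ratio of measures bounded; the excluded annuli then still carry only a geometrically summable proportion of the measure, preserving positivity of the final limit set. The remaining verifications — that the nested selection loses at most $\sum_k(10^{-k}+4^{-k})$ of the measure, and that membership in the limit set forces $f_A(z)=0$ through the local estimate analogous to Corollary~\ref{small ball} — go through mutatis mutandis once the doubling and density tools above are in place.
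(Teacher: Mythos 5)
Your outline follows the paper's strategy almost exactly, and the easy direction is fine. But in the hard direction there is a genuine gap at precisely the point you flag as the ``main obstacle,'' and your proposed fix does not work. You claim that after shrinking the annulus width (replacing $1-4^{-k}$ by $1-c\,4^{-k}$) ``the excluded annuli then still carry only a geometrically summable proportion of the measure.'' In $[0,1)$ this is automatic because $\Leb\big(B(y,r)\setminus B(y,(1-s)r)\big)=2sr$ is proportional to the width $s$. In a general Ahlfors regular space it is false: regularity only gives $\mu(B(y,r))\leq Cr^{\omega}$ and $\mu(B(y,(1-s)r))\geq C^{-1}(1-s)^{\omega}r^{\omega}$, so the annulus can carry measure as large as roughly $(C-C^{-1})r^{\omega}$ --- a fixed positive proportion of the ball --- \emph{no matter how small $s$ is}. (Think of a measure concentrated near the boundary sphere of the ball.) So making the annuli geometrically thinner does nothing to make them measure-theoretically thin, and your lower bound on the measure of the limit set $\bigcup_r\bigcap_{k\geq r}\bigcup_{i\in\mathcal{U}_k}B(y_k^{(i)},(1-c\,4^{-k})r_k^{(i)})$ collapses: at every stage you could be discarding a uniformly positive fraction of the measure.

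The missing idea is a \emph{selection} of the radii, not just a shrinking of the annulus. The paper restricts the covering family to balls $B(y,r)$ satisfying an explicit small-boundary condition, $\mu\big(B(y,r)\setminus B(y,(1-s)r)\big)$ bounded by a quantity of order $s$ times $r^{\omega}$ (the family $F_{t,s}$), and then verifies that this restricted family still satisfies the hypothesis of the Vitali-type covering theorem: around each density point there are at least $\tfrac1{2s}-1$ pairwise disjoint annuli of relative width $s$ between $B(y,\tfrac12 r)$ and $B(y,r)$, so by pigeonhole at least one of them has measure at most the total divided by $\tfrac1{2s}$, yielding admissible radii at arbitrarily small scales. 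Only with this small-boundary condition on each $B(y_k^{(i)},r_k^{(i)})$ does the excluded-annuli loss become summable and the analogue of Corollary \ref{small ball} go through. Your handling of the other constant issue --- replacing the exact proportionality $\mu(B(z,\rho))\asymp\rho$ by the two-sided bound $C^{-1}\rho^{\omega}\leq\mu(B(z,\rho))\leq C\rho^{\omega}$ and widening the interior ball accordingly --- is correct and matches what the paper does (its exclusion width is $(C^{4/\omega}4^{-k/\omega})^{1/2}$, producing the factor $4^{k/2}$), but that step is independent of, and does not substitute for, the boundary-measure selection.
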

We defer the proof of this theorem to the end of the section.
\begin{rem} \label{ubiq} If one considers Corollary 2 of \cite{BDV} for $\rho(r)= (\frac 1 r) ^{\frac 1 {\omega}}$, $u_n=2^n$, $l_n=2^{n-1}$ and $R_{\alpha}$ one point sets, this theorem provides a necessary and sufficient condition for its conclusion to hold.
\end{rem}
\begin{ex}\label{RV gen} Let $R_1,R_2,...$ be independent random variables all distributed according to a probability measure $\nu$. The sequence $\{R_1(\zeta),R_2(\zeta),...\}$ is BC for $\nu^{\mathbb{N}}$ almost every $\zeta$ iff $\mu \ll \nu$.
\end{ex}
\begin{ex} $T:X \to X$ is continuous, $\mu$ measure preserving and not $\mu$ ergodic then $\mu$ almost every orbit is not Borel-Cantelli. 
\end{ex}
The last theorem of this section is a more general version of \cite[Lemma 9]{cassels} and the proof is largely the same.
\begin{thm} \label{dichot}
Let $\bf s$ be a sequence of real numbers such that
$\underset{n \to \infty} {\lim} s_n=\infty$ and let $\bf x$ be a sequence in $X$.
For almost every $y$ we have \ $\underset{n \to \infty}{\liminf}\ {s_n d(x_n,y)}$ is either zero or infinity. That is,
$\mu(\{y: \underset{n \to \infty}{\liminf}\ {s_n d(x_n,y)} \in (0, \infty)\})=0$.
\end{thm}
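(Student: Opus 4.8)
The plan is to establish the complementary statement: for every pair of rationals $0<\alpha<\beta<\infty$ the set
\[
E=\{y\in X : \alpha\le \liminf_{n\to\infty} s_n d(x_n,y)\le \beta\}
\]
is $\mu$-null. Since $\{y:\liminf_{n\to\infty} s_n d(x_n,y)\in(0,\infty)\}$ is exactly the countable union of such sets over rational pairs, this suffices. Writing $g(y)=\liminf_{n\to\infty} s_n d(x_n,y)$, I note that $g$ is a $\liminf$ of the continuous functions $y\mapsto s_n d(x_n,y)$, hence Borel, so each $E$ is Borel. The whole argument would be a density-point argument, carried out in $(X,d,\mu)$, which is doubling because it is Ahlfors regular and therefore admits the Lebesgue density theorem.

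I would argue by contradiction. Suppose $\mu(E)>0$. Fix $\epsilon=\alpha/2$ and, for $M\in\mathbb N$, introduce
\[
E_M=\{y\in E : s_m d(x_m,y)\ge \alpha-\epsilon \text{ for all } m\ge M\}.
\]
Because $g\ge\alpha$ on $E$, each $y\in E$ lies in some $E_M$, so $E=\bigcup_M E_M$ is an increasing exhaustion and $\mu(E_M)>0$ for some $M$, which I fix. By the Lebesgue density theorem I choose a density point $y_0$ of $E_M$; note $y_0\in E_M\subset E$, so $g(y_0)\le\beta$ and there is a strictly increasing sequence $n_k$ with $s_{n_k}d(x_{n_k},y_0)\to g(y_0)\le\beta$, whence $d(x_{n_k},y_0)<(\beta+1)/s_{n_k}$ for all large $k$.

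The key step is to produce, at the critical scale $1/s_{n_k}$, a ball that is both a definite proportion of a ball centered at $y_0$ and entirely disjoint from $E_M$. Set $B_k=B(x_{n_k},(\alpha-\epsilon)/s_{n_k})$. Any $y'\in B_k$ satisfies $s_{n_k}d(x_{n_k},y')<\alpha-\epsilon$; if moreover $y'\in E_M$ and $n_k\ge M$, the definition of $E_M$ forces $s_{n_k}d(x_{n_k},y')\ge\alpha-\epsilon$, a contradiction, so $B_k\cap E_M=\emptyset$ for all large $k$. On the other hand $B_k\subset B(y_0,R/s_{n_k})$ with $R=\alpha+\beta+1$, and Ahlfors regularity gives
\[
\frac{\mu(B_k)}{\mu\big(B(y_0,R/s_{n_k})\big)}\ \ge\ C^{-2}\Big(\tfrac{\alpha-\epsilon}{R}\Big)^{\omega}=:\theta>0,
\]
a constant independent of $k$. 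Since $B_k$ avoids $E_M$, the ball $B(y_0,R/s_{n_k})$ misses at least the fixed fraction $\theta$ of its own measure from $E_M$; as $R/s_{n_k}\to 0$, this contradicts $y_0$ being a density point of $E_M$.

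The main obstacle, and the reason for introducing $E_M$, is that a single index $n_k$ with $s_{n_k}d(x_{n_k},y')$ small does \emph{not} bound the $\liminf$ $g(y')$ from above, since $\liminf$ ignores finitely many terms; so one cannot argue directly that the inner balls $B_k$ avoid $E$ itself. Passing to the exhaustion $\{E_M\}$ is precisely the Egorov-type uniformization that turns the pointwise eventual lower bound $g\ge\alpha$ into a bound holding from one common index $M$ onward on a set of nearly full measure, which is what lets a single index $n_k\ge M$ exclude points of $E_M$. The remaining ingredients --- Borel measurability of $g$, the Lebesgue density theorem in the doubling space $(X,d,\mu)$, and the two-sided ball estimate from Ahlfors regularity --- are routine.
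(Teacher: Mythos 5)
Your proof is correct and follows essentially the same route as the paper's: both decompose the bad set via an Egorov-type uniformization over a starting index ($E_M$ versus the paper's $A_N$ with liminf in $(a,2a)$ and $\inf_{n>N}s_nd(x_n,y)>a$), then run a density-point argument in which a ball $B(x_{n_k},c/s_{n_k})$ disjoint from the set occupies a definite Ahlfors-regularity fraction of a shrinking ball around the density point. The only differences are cosmetic (rational pairs $\alpha<\beta$ instead of dyadic intervals, and arguing at a single density point rather than showing the set has none).
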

To prove this theorem we use a version of the Lebesgue density theorem in this context (see for example \cite[Theorem 1.8]{Hein}), which is also used to prove Theorems \ref{suff gen} and \ref{nec suff gen}.
\begin{thm}  \label{density} For any $\mu $ measurable $A$ there exists $\bar{A}$ such that $\mu(A \Delta \bar{A})=0$ and for every $x \in \bar {A}$ we have $\underset{r \to 0^+}{\lim} \frac{\mu(A \cap B(x,r))}{\mu(B(x,r))}=1$.
\end{thm}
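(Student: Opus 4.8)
The plan is to deduce Theorem~\ref{density} from the Vitali covering theorem, which is available because Ahlfors regularity forces $\mu$ to be doubling. First I would record doubling: for any $y$ and $r>0$,
\[
\frac{\mu(B(y,2r))}{\mu(B(y,r))}<\frac{C(2r)^{\omega}}{C^{-1}r^{\omega}}=2^{\omega}C^{2},
\]
so $\mu$ is a doubling Borel measure, finite on balls; localizing to a fixed ball of finite measure when needed, Borel sets are then outer regular. Doubling is exactly the hypothesis under which the elementary $5r$-covering lemma can be iterated to yield the Vitali property I will use: if $E\subseteq X$ and $\mathcal{B}$ is a family of closed balls such that every point of $E$ is the center of arbitrarily small balls of $\mathcal{B}$, then $\mathcal{B}$ contains a countable disjoint subfamily $\{B_i\}$ with $\mu(E\setminus\bigcup_i B_i)=0$.

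Granting this, the heart of the argument is a single estimate. Fix a $\mu$-measurable $A$ and a rational $t\in(0,1)$, and let
\[
E_t=\Big\{x\in A:\ \liminf_{r\to0^{+}}\frac{\mu(A\cap B(x,r))}{\mu(B(x,r))}<t\Big\}.
\]
I claim $\mu(E_t)=0$. Given $\epsilon>0$, choose by outer regularity an open $U\supseteq E_t$ with $\mu(U)<\mu(E_t)+\epsilon$. For each $x\in E_t$ there are arbitrarily small radii $r$ with $B(x,r)\subset U$ and $\mu(A\cap B(x,r))<t\,\mu(B(x,r))$; these balls form a fine cover of $E_t$, so I extract a disjoint subfamily $\{B_i\}$ covering $\mu$-almost all of $E_t$. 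Since $E_t\subseteq A$,
\[
\mu(E_t)\le\mu\Big(A\cap\bigcup_i B_i\Big)=\sum_i\mu(A\cap B_i)<t\sum_i\mu(B_i)\le t\,\mu(U)<t\big(\mu(E_t)+\epsilon\big),
\]
whence $(1-t)\mu(E_t)<t\epsilon$; letting $\epsilon\to0$ gives $\mu(E_t)=0$.

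Taking the union over rational $t\uparrow1$ shows that for $\mu$-a.e.\ $x\in A$ the lower density is $\ge1$; as the ratio never exceeds $1$, the full limit equals $1$ at $\mu$-a.e.\ point of $A$. Defining $\bar A$ to be the set of all $x\in X$ with $\lim_{r\to0^{+}}\mu(A\cap B(x,r))/\mu(B(x,r))=1$, this gives $\mu(A\setminus\bar A)=0$. Applying the same estimate to the complement $A^{c}$ shows that $\mu$-a.e.\ point of $A^{c}$ is a density point of $A^{c}$, hence a zero-density point of $A$ and so not in $\bar A$; thus $\mu(\bar A\setminus A)=0$ and $\mu(A\triangle\bar A)=0$, as required. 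Measurability of $\bar A$ is not an issue, since the density function is a pointwise limit of the Borel functions $x\mapsto\mu(A\cap B(x,r))/\mu(B(x,r))$.

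The main obstacle is the Vitali covering theorem itself in this generality: the $5r$-covering lemma holds in any metric space, but upgrading it to a disjoint family that exhausts $E$ up to a null set genuinely uses doubling to bound the measure removed at each greedy step. Everything after that—the $E_t$ estimate and the passage to $\bar A$—is routine. This is why the statement is quoted from \cite{Hein}; I would either cite the Vitali theorem given there or, for self-containedness, derive it from the $5r$-lemma as indicated above.
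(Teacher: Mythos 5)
Your argument is correct, but there is no internal proof to compare it against: the paper does not prove Theorem~\ref{density}, it quotes it from \cite[Theorem 1.8]{Hein}. What you have written is essentially the standard textbook derivation (the one found in \cite{Hein} and in \cite{mattila}): Ahlfors regularity gives the doubling bound $\mu(B(y,2r))/\mu(B(y,r))\le 2^{\omega}C^{2}$, doubling yields the Vitali covering theorem, and the $E_t$ estimate with outer regularity finishes. Two remarks. First, you need not rederive Vitali from the $5r$-lemma: the paper already states exactly the covering theorem you want as Theorem~\ref{cover}, itself quoted from \cite[Theorem 1.6]{Hein}, so citing that would reduce your proof to the $E_t$ estimate alone. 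Second, two routine points deserve a sentence if written out in full: (i) $E_t$ is not known to be measurable a priori, so the chain $\mu(E_t)\le\sum_i\mu(A\cap B_i)<t\,\mu(U)$ should be run with the outer measure of $E_t$, which costs nothing since outer regularity holds for arbitrary sets once $\mu$ is Borel regular (the standing assumption for Ahlfors regular spaces); and (ii) the conclusion $(1-t)\mu(E_t)<t\epsilon$ forces $\mu(E_t)=0$ only when $\mu(E_t)<\infty$, so the localization to balls that you mention in passing must be made explicit, together with the observation that a doubling space is separable and hence exhausted by countably many such balls. With those additions your proof is complete and is, in substance, the proof given in the cited reference.
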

\begin{proof} It suffices to show that $$A_N=\{y: \underset{n \to \infty}{\liminf} \ s_n d(x_n,y) \in (a,2a) \ \text{ and } \  \underset{n>N}{\inf} \ s_nd(x_n,y)>a\}$$ has measure 0. When $s_nd(x_n,y)<2a$ then $B(x_n,\frac a {s_n}) \subset B(y, \frac {3a}{s_n})$ . Notice that when $n>N$ we have $B(x_n,\frac {a}{s_n}) \cap A_N =\emptyset$. But $\mu(B(x_n,\frac {a}{s_n})) \geq  \frac 1 {C^2 3^{\omega}} \mu(B(x_n,\frac {3a}{s_n}))$ implying that $A_N$ has no density points because $\underset{n \to \infty}{\lim} \frac a {s_n}=0$. By Theorem \ref{density}, $A_N$ has measure 0.
\end{proof}

We begin the proof of Theorems \ref{suff gen} and \ref{nec suff gen} with this sections key lemma which is analogous to Lemma \ref{key}.
\begin{lem} Let $M \in \mathbb{N}, c>0, e>0$ be constants,  $\bf{x}$ be a sequence in $X$ and $\bf a$ be a standard sequence.
If there exists a ball $J$ such that for all $r \in \mathbb{N}$ at least $c\mu(J)M^r$ points of the set
$\{x_{M^{r-1}}, x_{M^{r-1}+1},..., x_{M^r}\}$ lie in $J$ and are $(\frac e {M^r})^{\frac 1 {\omega}}$ separated from each other
then there exists $\delta>0$ depending only on  $c$ and $e$ such that ${\mu(\Limsup  B(x_n, (a_n)^{\frac 1 {\omega}}) \cap J)> \delta \mu(J) }$.
\label{Lemma 2 general}
\end{lem}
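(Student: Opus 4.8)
The plan is to mirror the proof of Lemma \ref{key} almost line for line, transporting each Lebesgue-measure estimate to its Ahlfors-regular counterpart through the two-sided bound $C^{-1}r^\omega < \mu(B(y,r)) < Cr^\omega$. The geometric heart of the argument is identical: if a batch of points $\{x_{M^{r-1}},\dots,x_{M^r}\}$ contains many points that are pairwise $(\frac{e}{M^r})^{1/\omega}$-separated and all lie in $J$, then the union of the shrunken balls $B(x_i,(a_i)^{1/\omega})$ must capture a definite amount of measure, and by Lemma \ref{simple} (summation over the dyadic-type blocks indexed by $r$) the total captured measure stays bounded below uniformly in the standard sequence $\bf a$. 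First I would, as in Lemma \ref{key}, pass to a power of $M$ so that WLOG $c > \frac{2}{M}$ (here the loss in $c$ is again a factor $\frac{M^k-1}{M^k}$, which exceeds $\frac{2}{M^k}$ for suitable $k$), and replace $a_i$ by $b_i := \min\{a_{M^j},\frac{e}{2M^j}\}$ for $M^{j-1}\le i < M^j$, so that each ball $B(x_i,(b_i)^{1/\omega})$ has radius at most $(\frac{e}{2M^j})^{1/\omega}$.

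The key local counting step is where the Ahlfors regularity enters quantitatively. In the interval case one used that a $\frac{e}{2M^j}$-neighborhood of a ball $B(z,r)$ can meet at most $O(r M^j/e)$ points that are $\frac{e}{M^j}$-separated. Here I would instead argue by a volume/packing estimate: the points of a given block that lie within distance $(\frac{e}{2M^j})^{1/\omega}$ of the previously accumulated set $\bigcup_{i=k_0}^{M^{j-1}} B(x_i,(b_i)^{1/\omega})$ all lie in an $O((\frac{e}{M^j})^{1/\omega})$-neighborhood of that set, whose measure is comparable (via the constant $C$) to the measure already accumulated; since the points are $(\frac{e}{M^j})^{1/\omega}$-separated, disjoint balls of radius $\sim(\frac{e}{M^j})^{1/\omega}$ around them each have measure $\gtrsim C^{-1}\frac{e}{M^j}$, so their number is bounded by $C^2$ times (accumulated measure)$\cdot\frac{M^j}{e}$. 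Assuming $\mu(\bigcup_{i=k_0}^{M^{j-1}}B(x_i,(b_i)^{1/\omega})) < \delta$ then leaves at least a definite fraction $(c' - \frac{1}{M})M^r$ of the separated points in $J$ whose shrunk balls are \emph{disjoint} from everything accumulated so far, and these contribute measure $\gtrsim C^{-1}\sum_{i=M^{j-1}}^{M^j} b_i$ to the incremental union inside $J$.

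Summing these disjoint increments over $j$ and invoking Lemma \ref{simple} (which says $\sum b_i = \infty$ because $\sum a_i = \infty$) forces $\mu(\Limsup B(x_n,(a_n)^{1/\omega}) \cap J)$ to exceed a threshold $\delta$ depending only on $c$, $e$, $\omega$ and the regularity constant $C$ — and in particular not on $\bf a$. I expect the main obstacle to be bookkeeping the Ahlfors constant $C$ correctly through the packing step: in $[0,1)$ the measure of a union of $\frac{1}{N}$-balls around separated points is \emph{exactly} comparable to their count, whereas here the comparison carries factors of $C$ and $C^{-1}$ in both directions, so one must verify that after the reduction $c > \frac{2}{M}$ the surviving positive fraction $(c'-\frac{1}{M})$ is still large enough that the $C$-dependent constants do not swamp it. This is purely a matter of choosing the power of $M$ large enough relative to $C$; once the constants are tracked, the divergence argument via Lemma \ref{simple} closes exactly as in the interval case, yielding the claimed $\delta = \delta(c,e,\omega,C) > 0$.
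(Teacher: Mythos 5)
Your proposal follows essentially the same route as the paper's proof: reduce to $c\mu(J)>\tfrac2M$ by passing to a power of $M$, truncate the radii on each block $M^{j-1}\le i<M^j$, bound via Ahlfors regularity the number of separated points whose shrunken balls can meet the previously accumulated union of measure $<\delta$, and sum the disjoint increments using Lemma \ref{simple}. One constant needs fixing: you shrink the radii to $(\tfrac{e}{2M^j})^{1/\omega}=2^{-1/\omega}(\tfrac{e}{M^j})^{1/\omega}$, which for $\omega>1$ exceeds half the separation distance, so the balls around $(\tfrac{e}{M^j})^{1/\omega}$-separated points need not be disjoint; the paper instead takes radius $\tfrac14(\tfrac{e}{M^j})^{1/\omega}$ (a fixed fraction of the separation itself, not of the measure scale), which guarantees disjointness for every $\omega$ — note this is not repaired by taking $M$ larger.
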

\begin{proof} The proof is similar to Lemma \ref{key}. As before, we will assume $c \mu(J)>\frac 2 M$. Let $b_i= \min \{(a_{M^j})^{\frac 1 {\omega}}, \frac 1 4 (\frac e {M^j})^{\frac {1}{w}} \}$ for $M^{j-1} \leq i < M^j$. Let $\delta=\frac {ec\mu(J)}{2^{2\omega+1}C}$.
 If  $\mu(\underset{i=k_0}{\overset{M^{j-1}}{\cup}}B(x_i,a_i^{\frac 1 {\omega}}\cap J))<\delta$ then we examine $\underset{i=M^{j-1}}{\overset{M^j}{\cup}}B(x_i,b_i)$.

 By the definition of Ahlfors regular any ball of measure of $m$ contains at most
 $m \frac {M^r}{e} C 2^{2\omega}$ disjoint balls of radius
$\frac 1 4 (\frac e {M^r})^{\frac 1 {\omega}}$.
 Notice that if $\{y_1,...,y_r\}$ is a maximal $\delta$-separated set contained in $J$ then
  $\underset{i=1}{\overset{r}{\cup}}B(y_i,2\delta)$ covers $J$.
 It follows that a $\frac 1 4 (\frac{e}{M^r})^{\frac 1 {\omega}}$
 neighborhood of a ball of measure $m$ contains at most
 $m \frac {M^r}{e} C 2^{2\omega}$ points that are $\frac 3 4 (\frac {e}{M^r})^{\frac 1 {\omega}}$
 separated. Therefore at most
$\delta \frac {M^j}{e} C 2^{2\omega} +M^{j-1} \leq \tfrac32 M^{j-1}$
 of the separated points are within $\frac14(\frac{e } {M^r})^{\frac 1 {\omega}}$
 of the previous measure (if $y_0$ and $y_1$ are $\epsilon$ separated then $B(y_0,\frac {\epsilon}{4})$ is $\frac {\epsilon}{2}$ separated from $B(y_1,\frac{\epsilon}{4})$).
 This leaves at least $(.5c\mu(J)-\frac 1 M)M^j$ separated points left because $c\mu(J)> \frac 2 M$.
 With the observation that $({\frac{c\mu(J)}2-\frac 1 M)\underset{i=M^{j-1}}{\overset{\infty}{\sum}}b_i^{\omega}}$ forms a divergent series the proof is completed.
\end{proof}
This lemma is helpful, because the hypothesis of Theorem \ref{suff gen} imply the hypothesis of the lemma.  \mbc{comment}
\begin{lem} \label{link gen} There exists a function $\beta \colon \mathbb{R}^+ \to \mathbb{R}^+$ with the property that if there exists $d>0$ such that for all $N$ the inequality $\mu(\underset{i=1}{\overset{N}{\cup}} B(x_i, \frac 1 N^{\frac 1 w}) ) \geq d $ holds, then
for any standard sequence $\bf a$ we have
 $\mu(\Limsup  B(x_n, (a_n)^{\frac 1 {\omega}}))> \beta (d)$.
Moreover, $\beta(d)>\big((\frac 1 2) ^{2\omega+1}\frac 1 C \big)(\frac 1 2)^{\omega} \frac{d}{2C}$.
\end{lem}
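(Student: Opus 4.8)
The plan is to mirror the proof of Lemma~\ref{link}, replacing the one-dimensional counting by its Ahlfors-regular analogue and feeding the result into Lemma~\ref{Lemma 2 general}. I would take $\beta(d)=\frac{d}{2^{3\omega+2}C^2}$; note that this is exactly the factored quantity $\big((\tfrac12)^{2\omega+1}\tfrac1C\big)(\tfrac12)^\omega\tfrac{d}{2C}$ in the statement, where the second factor $(\tfrac12)^\omega\tfrac{d}{2C}=\frac{d}{2^{\omega+1}C}$ will be the per-block separation count $c\mu(J)$ and the first factor $\frac{1}{2^{2\omega+1}C}$ is the yield ratio of Lemma~\ref{Lemma 2 general} with $e=1$. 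So essentially all the work is arranging that the separation hypothesis of Lemma~\ref{Lemma 2 general} holds with precisely these constants.

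First I would convert the measure lower bound into a lower bound on the number of well-separated points. Fix $N$, let $S\subseteq\{x_1,\dots,x_N\}$ be a maximal $(\tfrac1N)^{1/\omega}$-separated subset, and put $m=|S|$. By maximality every $x_i$ lies within $(\tfrac1N)^{1/\omega}$ of some $s\in S$, hence $B(x_i,(\tfrac1N)^{1/\omega})\subseteq B(s,2(\tfrac1N)^{1/\omega})$ and $\underset{i=1}{\overset{N}{\cup}}B(x_i,(\tfrac1N)^{1/\omega})\subseteq\underset{s\in S}{\cup}B(s,2(\tfrac1N)^{1/\omega})$. Ahlfors regularity bounds each of the latter balls by $C\big(2(\tfrac1N)^{1/\omega}\big)^\omega=C2^\omega/N$, so the hypothesis $\mu(\underset{i=1}{\overset{N}{\cup}}B(x_i,(\tfrac1N)^{1/\omega}))\geq d$ forces $m\geq dN/(C2^\omega)$. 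Thus for every $N$ the segment $\{x_1,\dots,x_N\}$ contains at least $dN/(C2^\omega)$ points that are $(\tfrac1N)^{1/\omega}$-separated.

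Next I would pass to blocks. Taking $N=M^r$ and discarding the at most $M^{r-1}$ separated points of index below $M^{r-1}$, the block $\{x_{M^{r-1}},\dots,x_{M^r}\}$ still contains at least $\frac{dM^r}{C2^\omega}-M^{r-1}$ points that are $(\tfrac1{M^r})^{1/\omega}$-separated. Choosing $M\geq 2C2^\omega/d$ makes $M^{r-1}\leq\frac{dM^r}{2C2^\omega}$, leaving at least $\frac{dM^r}{2C2^\omega}$ separated points in each block, uniformly in $r$. This is exactly the hypothesis of Lemma~\ref{Lemma 2 general} with $e=1$ and $c\mu(J)=\frac{d}{2^{\omega+1}C}$, and applying that lemma yields $\mu(\Limsup B(x_n,(a_n)^{1/\omega}))>\frac{e\,c\mu(J)}{2^{2\omega+1}C}=\frac{d}{2^{3\omega+2}C^2}=\beta(d)$. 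Since both the per-block count and the yield ratio depend only on $d$, $C$, $\omega$, the resulting $\beta$ depends only on $d$, as required.

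The hard part will be the localization to a single ball $J$ demanded by Lemma~\ref{Lemma 2 general}: the hypothesis here is global, so for each $r$ it only produces many separated points \emph{somewhere} in $X$, and a priori these need not all lie in one fixed ball when $X$ is unbounded. I would handle this in one of two ways. When $X$ is bounded (as in all the compact examples, e.g. subshifts and Cantor sets), one takes $J$ to be a ball containing the whole space and the argument above applies verbatim. In general I would instead observe that the covering/divergence mechanism \emph{inside} the proof of Lemma~\ref{Lemma 2 general} never uses that the fresh balls concentrate: the step ``if the measure covered through index $M^{r-1}$ is below $\delta$, then a definite proportion of the block's separated balls are fresh and contribute new measure'' uses only the uniform Ahlfors bounds and the divergence of $\sum_r\frac{d}{2C2^\omega}b_{M^r}^{\omega}$, and so remains valid with $J$ replaced by all of $X$. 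Running that argument then gives $\mu(\underset{i\geq k_0}{\cup}B(x_i,b_i))>\frac{d}{2^{3\omega+2}C^2}$ for every $k_0$, hence the same lower bound for the $\limsup$.
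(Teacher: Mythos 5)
Your proof is correct and follows essentially the same route as the paper's: both convert the measure hypothesis into a count of $(\frac{1}{M^r})^{1/\omega}$-separated points via a maximal-separated-set covering argument and Ahlfors regularity, and then feed the resulting constants $c=\frac{d}{2^{\omega+1}C}$, $e=1$ into Lemma~\ref{Lemma 2 general}, arriving at the same value $\beta(d)=\frac{d}{2^{3\omega+2}C^2}$. The only differences are cosmetic: the paper removes the contribution of the first $N$ indices by shrinking the radius to $(\frac{d}{2CN})^{1/\omega}$ so that those balls carry measure at most $d/2$, rather than by discarding at most $M^{r-1}$ indices from the separated set as you do, and it silently runs the key lemma with $J=X$ where you explicitly address the localization.
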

\begin{proof} Assume $\mu(X)\geq 1$. First observe that $\mu(\underset{i=N}{\overset{N\frac{2C}{d}}{\cup}} B(x_i, (\frac{d}{2CN})^{\frac 1 {\omega}}) )\geq \frac d 2 $
because $\mu(\underset{i=1}{\overset{N}{\cup}} B(x_i, (\frac{d}{2CN})^{\frac{1}{\omega}} ))\leq \frac d 2$.
We claim that at least $(\frac 1 2)^{\omega}N$ of the points in the set $\{x_N,...,x_{\frac{2CN}{d}}\}$ must lie $(\frac{d}{2CN})^{\frac 1 {\omega}}$ apart. To see this, observe that if $y_1,...,y_k$ are a collection of points lying within $(\frac{d}{2CN})^{\frac 1 {\omega}}$ of each other then ${\mu(\underset{i=1}{\overset{k}{\cup}}B(y_i, (\frac d {2CN})^{\frac 1 {\omega}})) \leq 2^{\omega}\frac d {2N}}$. (This union is contained in a ball of radius $2 (\frac d {2CN})^{\frac 1{\omega}}$.) Therefore there must be at least $(\frac 1 2)^{\omega} N $ points in the set $\{x_N,...,x_{\frac{2CN}{d}}\}$ that are $(\frac{d}{2CN})^{\frac 1 {\omega}}$ separated. To sum up, the argument shows $c=(\frac 1 2)^{\omega} \frac{d}{2C}$ and $e=1$.
\end{proof}
The local version follows by realizing that the the previous proof works with ${M=\frac {2C}{d \mu(J)}}$, $c=(\frac 1 2)^{\omega} \frac{d}{2C}$ and $e=1$.
\begin{cor}\label{link2 gen} There exists a function $\alpha: \mathbb{R}^+ \to \mathbb{R}^+$ with the property that if there exists a ball $J$ and $d>0$ such that for all $N$ the inequality $\mu(\underset{i=1}{\overset{N}{\cup}} B(x_i, \frac 1 N^{\frac 1 w})\cap J ) \geq d \mu(J)$ holds, then 
for any standard sequence $\bf a$ we have
 ${\mu(\Limsup  B(x_n, (a_n)^{\frac 1 {\omega}})) \cap J)> \alpha (d) \mu(J)}$.
Moreover, $\alpha(d)>(\frac 1 2 ^{2\omega+1}\frac 1 C )(\frac 1 2)^{\omega} \frac{d}{2C}$.
\end{cor}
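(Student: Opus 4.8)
The plan is to prove Corollary \ref{link2 gen} as the ball-restricted localization of Lemma \ref{link gen}: I will turn the density hypothesis on $J$ into the separation hypothesis required by the local key Lemma \ref{Lemma 2 general}, and then invoke that lemma directly, reading off $\alpha(d)$ from its conclusion. Throughout I fix the ball $J$, set $M=\tfrac{2C}{d\mu(J)}$ (rounded up to an integer, the harmless loss being absorbed into the constants exactly as in Lemma \ref{key}), take $e=1$, and put $c=(\tfrac12)^{\omega}\tfrac{d}{2C}$. The goal is to check, for every $r$, that at least $c\mu(J)M^{r}$ points of the block $\{x_{M^{r-1}},\dots,x_{M^{r}}\}$ lie in $J$ and are $(1/M^{r})^{1/\omega}$-separated, which is precisely the input Lemma \ref{Lemma 2 general} needs with these values of $M,c,e$.

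The heart of the argument is the counting step at scale $N=M^{r}$. Applying the hypothesis at level $M^{r}$ gives $\mu\big(\cup_{i=1}^{M^{r}}B(x_i,(1/M^{r})^{1/\omega})\cap J\big)\ge d\mu(J)$. On the other hand, the Ahlfors upper bound $\mu(B(\cdot,\rho))<C\rho^{\omega}$ applied to the first $M^{r-1}$ balls (each of measure $<C/M^{r}$) bounds their total contribution by $M^{r-1}\cdot C/M^{r}=C/M=d\mu(J)/2$. Subtracting, the block $\{x_{M^{r-1}},\dots,x_{M^{r}}\}$ already contributes measure at least $d\mu(J)/2$ inside $J$.

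Now let $K$ be the size of a maximal $(1/M^{r})^{1/\omega}$-separated subset among the block's points whose balls meet $J$. By maximality every such point lies within $(1/M^{r})^{1/\omega}$ of a chosen one, so the block's contribution is covered by the $K$ balls of radius $2(1/M^{r})^{1/\omega}$ centered at the chosen points, each of measure $<C2^{\omega}/M^{r}$. Hence $d\mu(J)/2\le K\,C2^{\omega}/M^{r}$, forcing $K>\tfrac{d\mu(J)M^{r}}{2C2^{\omega}}=(\tfrac12)^{\omega}\tfrac{d}{2C}\,\mu(J)M^{r}=c\mu(J)M^{r}$, which is exactly the required count with $e=1$. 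With the separation hypothesis verified for all $r$, Lemma \ref{Lemma 2 general} applies and yields $\mu\big(\Limsup B(x_n,(a_n)^{1/\omega})\cap J\big)>\alpha(d)\mu(J)$ with prefactor $\tfrac{ec}{2^{2\omega+1}C}=(\tfrac12)^{2\omega+1}\tfrac1C(\tfrac12)^{\omega}\tfrac{d}{2C}=:\alpha(d)$; since this depends only on $d$ and the structural constants $C,\omega$, and not on $J$ or on the standard sequence $\mathbf a$, it is the claimed function.

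I expect two points to require care rather than new ideas. First, the separated points produced by the covering argument are only known to meet $J$; replacing ``ball meets $J$'' by ``center lies in $J$'' introduces a boundary discrepancy, which is controlled by the Ahlfors constant and absorbed into $c$ in the same way the at-most-two-ball boundary term is discarded in the $[0,1)$ proof of Lemma \ref{link}. Second, Lemma \ref{Lemma 2 general} is applied under its standing normalization $c\mu(J)>2/M$; since here $c\mu(J)M=(\tfrac12)^{\omega}<2$, this fails for $M$ itself, but it is restored by passing to a suitable power $M^{k}$, precisely the reduction already carried out inside the proof of Lemma \ref{key}. Neither adjustment affects the final value of $\alpha(d)$.
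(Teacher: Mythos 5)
Your proposal is correct and follows essentially the same route as the paper: the paper proves Corollary \ref{link2 gen} by observing that the proof of Lemma \ref{link gen} localizes to the ball $J$ with $M=\frac{2C}{d\mu(J)}$, $c=(\frac12)^{\omega}\frac{d}{2C}$, $e=1$, and then feeding this into Lemma \ref{Lemma 2 general}, exactly as you do. Your two flagged adjustments (the boundary discrepancy and restoring $c\mu(J)>2/M$ by passing to a power of $M$) are precisely the reductions the paper performs implicitly, so nothing further is needed.
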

\begin{rem} \label{alph inv} For ease of notation let $\alpha^{-1}(s)= \inf \{d>0: \alpha(d)>s\}.$
\end{rem}
\begin{proof}[Proof of Theorem \ref{suff gen}]  By Lemma \ref{Lemma 2 general} the conditions of Theorem \ref{suff gen} imply that for any standard sequence $\bf{a}$ the complement of $\Limsup  B(x_n, (a_n)^{\frac 1 {\omega}})$ has no density points. Theorem \ref{density} implies that $\Limsup  B(x_n, (a_n)^{\frac 1 {\omega}})$ has full measure.
\end{proof}

\begin{rem}\label{thm 3 gen}  
A sequence in  $\{x_1,x_2,...\}$ in  $[0,1)$ is BC if for all $\delta>0$  there are  constants  $\epsilon>0$ and  $M_{\epsilon}$ such that 
$\mu(X \backslash M_{\epsilon})<\delta$ \ and \\[-2mm]
\[
\underset{r \to 0}{\limsup} \ \underset{N \to \infty}{\liminf} \ \mu(\underset{i=1}{\overset{N}{\cup}}B(x_i, (\tfrac 1 N)^{\frac{1}{\omega}})) \cap B(z,r))> \epsilon \mu(B(z,r))
\]
for $z \in M_{\epsilon}$.
\end{rem}
We now continue to the proof of Theorem \ref{nec suff gen}, which requires a covering theorem (see for example \cite[Theorem 1.6]{Hein}).
\begin{thm} \label{cover} Given a measurable set $A$ and family of balls $F$ such that ${\liminf \{r>0: b(a,r) \in F\}=0}$ for all $a \in A$ then one can cover almost all of $A$ by a disjoint countable collection of balls in~ $F$.
\end{thm}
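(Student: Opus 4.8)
The statement is the Vitali covering theorem in the Ahlfors regular setting, and the plan is to deduce it from the elementary $5r$-covering lemma together with an iteration that strips away a fixed proportion of the remaining measure at each stage. First I would record the elementary covering lemma: from any subfamily $G\subset F$ whose radii are uniformly bounded, one can extract a countable \emph{disjoint} subfamily $\{B_j\}$ such that every ball of $G$ meets some $B_j$ of at least half its radius, hence lies in $5B_j$; this is the standard greedy/Zorn argument, valid in any metric space once radii are bounded, which we arrange by discarding from $F$ all balls of radius $>1$ (losing nothing, since $F$ still finely covers $A$ by hypothesis). Ahlfors regularity enters only through the enlargement estimate $\mu(5B)\le C^{2}5^{\omega}\mu(B)=:C_{5}\,\mu(B)$, obtained by comparing $\mu(B(a,5r))<C(5r)^{\omega}$ with $\mu(B(a,r))>C^{-1}r^{\omega}$.

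Next comes a reduction: since $X$ is Ahlfors regular it is $\sigma$-finite, so a routine decomposition lets us assume $\mu(A)<\infty$. The heart of the argument is a single-stage estimate. Given the current target $A\cap U$ inside an open set $U$, the family $F$ still finely covers it by balls contained in $U$; applying the covering lemma to that subfamily yields disjoint $\{B_j\}\subset F$ with $B_j\subset U$ and $A\cap U\subset\bigcup_j 5B_j$. Summing gives $\mu(A\cap U)\le\sum_j\mu(5B_j)\le C_{5}\sum_j\mu(B_j)=C_{5}\,\mu\big(\bigsqcup_j B_j\big)$. If $U$ is chosen \emph{efficiently}, i.e.\ $\mu(U)\le(1+\delta)\mu(A\cap U)$ by outer regularity of the finite Borel measure $\mu$, then the disjoint balls occupy a definite fraction of $U$; selecting finitely many $B_1,\dots,B_k$ capturing all but a $\delta'$-proportion of $\sum_j\mu(B_j)$ yields
\[
\mu\Big((A\cap U)\setminus\bigcup_{i=1}^{k}\overline{B_i}\Big)\le\mu(U)-\sum_{i=1}^{k}\mu(B_i)\le q\,\mu(A\cap U),
\]
for a constant $q=(1+\delta)-(1-\delta')/C_{5}$, which is $<1$ once $\delta,\delta'$ are small, and which depends only on the Ahlfors data.

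Finally I would iterate. Starting from $A_0=A$ and an efficient open $U_0\supset A_0$, one stage produces finitely many disjoint balls and a residual set $A_1=(A\cap U_0)\setminus\bigcup_{i\le k}\overline{B_i}$ sitting inside the \emph{open} set $U_0\setminus\bigcup_{i\le k}\overline{B_i}$, with $\mu(A_1)\le q\,\mu(A_0)$. Choosing the next efficient neighborhood $U_1\supset A_1$ inside this open set keeps every newly selected ball disjoint from all previous ones, so after $n$ stages the accumulated family is finite, disjoint, and $\mu(A_n)\le q^{n}\mu(A_0)\to 0$. The union over all stages is then the desired countable disjoint subfamily covering $\mu$-almost all of $A$.

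The step I expect to be the main obstacle is making the single-stage estimate contract the measure of $A$ rather than merely that of the auxiliary neighborhood $U$: the $5r$-lemma only guarantees covering by the \emph{enlarged} balls $5B_j$, so without the efficiency $\mu(U)\le(1+\delta)\mu(A\cap U)$ the disjoint balls $B_j$ need not cover any fixed fraction of $A$ itself. Coupling the efficiency of $U$ with the finite truncation — needed to keep the removed set closed and to preserve disjointness across stages — is the delicate bookkeeping, as is the routine but necessary observation that for each center all but countably many radii give spheres of zero $\mu$-measure, so that open and closed balls are interchangeable throughout.
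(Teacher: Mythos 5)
The paper does not actually prove this statement: it is quoted as a known covering theorem with a pointer to \cite[Theorem 1.6]{Hein}, so there is no internal proof to compare against. Your argument is, in substance, exactly the standard proof of that cited result for doubling measures: the basic $5r$-covering lemma, the enlargement bound $\mu(5B)\le C^2 5^{\omega}\mu(B)$ coming from Ahlfors regularity, the ``efficient'' open neighborhood $U$ with $\mu(U)\le(1+\delta)\mu(A\cap U)$ so that the disjoint selected balls eat a definite fraction of $A$ itself rather than only of $U$, and the geometric-decay iteration inside the successively shrinking open sets. The quantitative bookkeeping checks out: the contraction factor $q=(1+\delta)-(1-\delta')/C_5$ is indeed $<1$ for small $\delta,\delta'$, and disjointness across stages is correctly preserved by working inside the open complement of the finitely many closed balls already chosen. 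The appeal to outer regularity is legitimate for a finite Borel(-regular) measure on a separable metric space, and separability and $\sigma$-finiteness do follow from Ahlfors regularity, though it is worth saying so explicitly since those are the only places the standing hypotheses are used beyond the doubling inequality.

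The one point where your fix does not quite work as stated is the open-versus-closed ball issue at the end. You propose to interchange open and closed balls on the grounds that for each center all but countably many radii give spheres of $\mu$-measure zero; but the radii of the balls in $F$ are prescribed by the hypothesis, not chosen by you, so nothing prevents every admissible radius at a given center from being one of the countably many bad ones. The clean resolution is the one in \cite{Hein}: take the members of $F$ to be closed balls (or observe that the theorem as used in the paper only ever needs the closures of the selected balls to cover $A$ up to measure zero, which your iteration delivers verbatim, since the residual set at stage $n$ is exactly $(A\cap U_0)\setminus\bigcup\overline{B_i}$ and its measure tends to $0$). With that adjustment the proof is complete and is the intended one.
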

\begin{cor} For any measurable set $A$ with $\mu(A)< \infty$ and $\epsilon>0$ there exists a finite number $N_{A,\epsilon}$ such that one can cover all but $\epsilon$ of $A$ by $N_{A,\epsilon}$ disjoint balls in~ $F$.
\end{cor}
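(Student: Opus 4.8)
The plan is to derive this corollary from Theorem \ref{cover} by a standard truncation argument for convergent series, using the finiteness of $\mu(A)$. First I would apply Theorem \ref{cover} to the set $A$ and the family $F$: since the hypothesis $\liminf\{r>0 : b(a,r)\in F\}=0$ holds for every $a\in A$, the theorem produces a countable disjoint collection $\{B_i\}_{i=1}^\infty$ of balls in $F$ with $\mu\bigl(A\setminus\bigcup_{i=1}^\infty B_i\bigr)=0$.

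Next I would exploit disjointness. Because the balls $B_i$ are pairwise disjoint and almost all of $A$ is covered, countable additivity gives
\[
\mu(A)=\mu\Bigl(A\cap\bigcup_{i=1}^\infty B_i\Bigr)=\sum_{i=1}^\infty \mu(A\cap B_i).
\]
Since $\mu(A)<\infty$, this is a convergent series of nonnegative terms, so its tails vanish: there is a finite $N_{A,\epsilon}$ with $\sum_{i>N_{A,\epsilon}}\mu(A\cap B_i)<\epsilon$.

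Finally I would check that the finite subcollection $\{B_1,\dots,B_{N_{A,\epsilon}}\}$ does the job. These balls are still disjoint and lie in $F$, and
\[
\mu\Bigl(A\setminus\bigcup_{i=1}^{N_{A,\epsilon}} B_i\Bigr)=\mu(A)-\sum_{i=1}^{N_{A,\epsilon}}\mu(A\cap B_i)=\sum_{i>N_{A,\epsilon}}\mu(A\cap B_i)<\epsilon,
\]
which is exactly the assertion. There is no genuine obstacle here: the only point requiring care is that the covered mass of $A$ be counted using the disjointness of the $B_i$, so that $\mu(A)$ splits as an honest convergent series rather than merely a subadditive bound, and that the assumption $\mu(A)<\infty$ be used to guarantee the tail can be driven below $\epsilon$; without finiteness of $\mu(A)$ the truncation step would fail, so that hypothesis is essential.
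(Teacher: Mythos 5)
Your argument is correct and is exactly the standard truncation the paper has in mind (the corollary is stated without proof as an immediate consequence of Theorem \ref{cover}): apply the covering theorem to get a countable disjoint family covering $A$ up to measure zero, use disjointness and $\mu(A)<\infty$ to write the covered mass as a convergent series, and cut off the tail below $\epsilon$. Nothing is missing.
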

\begin{proof}[Proof of Theorem \ref{nec suff gen}]
Assume $\bf x$ is not BC. Thus, there exists a standard sequence $\bf a$ so that $\mu((\Limsup  B(x_n, (a_n)^{\frac 1 {\omega}}))^c)>0$. We define the following sets:\vspace{-2mm}
\begin{center}
$S=\Limsup  B(x_n, (a_n)^{\frac 1 {\omega}})$\\
$R_{t,\delta}=\{y \in S^c: \mu(B(y, \delta') \cap S^c)>\mu(B(y,\delta))  t \, \forall \, \delta'<\delta \}$.

\end{center}
 By the existence of density points (Theorem \ref{density}), $\mu(S^c \cap \underset{n=1}{\overset{\infty}{\cup}}R_{t,\frac 1 n })=\mu(S^c)$ for any $t<1$.

We also define families of balls (which we will use for covering arguments) by:

$F_{t,s}=\big\{B(y,r)\colon y \in R_{t, r}, \mu(B(y, r) \backslash B(y,(1-s)r ))< 2^{\omega+1} \frac{Cr^{\omega}-\frac {(.5r)^{\omega}} {C}}{\frac 1 {2s}} \big\}$.

\begin{rem} For $t$, all $s$ sufficiently close to 0 and $\mu$ Ahlfors this family satisfies the hypothesis of Theorem \ref{cover} because of Theorem \ref{density} and the fact that there are at least $\frac 1 {2s}-1$ disjoint annuli  $\mu(B(y, r') \backslash B(y,(1-s)r' )) $ between $B(y,\frac 1 2 r)$ and $B(y,r)$. The ball contained in these annuli has radius at least $\frac 1 2 r$. For more sophisticated coverings along this line see Appendix 1 \cite{gdavid}, which describes a generalization of dyadic cubes. This small boundary condition is needed to obtain this setting's version of Corollary \ref{small ball}.
\end{rem}

Armed with the small boundary condition on balls in $F_{t,s}$, we proceed with a similar argument to the case of $\ui$ with $\Leb$ measure.

Choose $\delta$ small enough so that $R_{\frac{999}{1000},\delta}\neq \emptyset$. Let $y_1 \in R_{\frac{999}{1000},\delta}$.
By Corollary \ref{link2 gen} and recalling the definition of $\alpha^{-1}$ in Remark \ref{alph inv}, there exist infinitely many $N$ such that
\[
\mu(\underset{i=1}{\overset{N}{\cup}} B(x_i, (\tfrac 1 N)^{\frac 1 {\omega}}) \cap B(y_1, \delta)) \leq \tfrac1{1000\,\alpha}\,\mu (B(y_1, \delta)).
\]

 Pick one and denote it $N_1$.
 We now proceed directly to the inductive step: Covering most of $B(y_{k-1}^{(1)}, r_{k-1}^{(1)}),...,B(y_{k-1}^{(t_{k-1})}, r_{k-1}^{(t_{k-1})})$ by choosing $t_k$ points $y_k^{(1)},....,y_k^{(t_k)}$ with corresponding radii, $r_k^{(1)},...,r_k^{(t_k)}$ such that:

\begin{enumerate}
\item $B(y_k^{(i)},t_k^{(i)})  \in F_{1-10^{-2k}, (C^{\frac {4} {\omega } } 4^{-k})^{\frac 1 {2} }}$
\item $B(y_k^{(j)}, r_k^{(j)}) \subset \underset{i=1}{\overset{t_{k-1}}{\cup}} B(y_{k-1}^{(i)}, r_{k-1}^{(i)})$ for all $j$.
\item $\mu(B(y_{k-1}^{(i)}, r_{k-1}^{(i)}) \cap (\underset{i=1}{\overset{t_k}{\cup}}B(y_k^{(i)},r_k^{(i)})))>(1-10^{-k})\mu(  B(y_{k-1}^{(i)}, r_{k-1}^{(i)}))$ for each $y_{k-1}^{(i)} $.
\item The $B(y_k^{(i)}, r_k^{(i)})$ are all disjoint.
\end{enumerate}

Here is the justification. First notice that for all $s<1$ we have
$$\underset{n \to \infty}{\lim} \mu(S^c \cap
\underset{i=1}{\overset{t_{k-1}}{\cup}}B(y_{k-1}^{(i)},
r_{k-1}^{(i)}) \cap R_{s,\frac 1 n}) = \mu(S^c \cap
\underset{i=1}{\overset{t_{k-1}}{\cup}}B(y_{k-1}^{(i)},
r_{k-1}^{(i)}))$$ which by induction is greater than
$$(1-10^{-2(k-1)})\mu(
\underset{i=1}{\overset{t_{k-1}}{\cup}}B(y_{k-1}^{(i)},
r_{k-1}^{(i)}))$$ if $k>2$ and $(1-10^{-4})\mu(B(y_1,\delta))$ if
$k=2$. Therefore
$$\mu(\underset{i=1}{\overset{t_{k-1}}{\cup}}B(y_{k-1}^{(i)},
r_{k-1}^{(i)}) \backslash
\underset{n=1}{\overset{\infty}{\cup}}\underset{x \in R_{1-
\epsilon, \frac 1 n} s.t. B(x, \frac 1 n) \subset
\underset{i=1}{\overset{t_{k-1}}{\cup}}B(y_{k-1}^{(i)},
r_{k-1}^{(i)})}{\cup} B(z, \frac 1 n))$$ is at most
$(1-10^{-2(k-1)})\mu(\underset{i=1}{\overset{t_{k-1}}{\cup}}B(y_{k-1}^{(i)},
r_{k-1}^{(i)}))$ if $k>2$ and $(1-10^{-4})\mu(B(y_1,\delta))$ if
$k=2$.

By Theorem \ref{cover} (which gives the disjointness of
$B(y_k^{(i)},r_k^{(i)})$) it is possible to cover $ \underset{i=1}{\overset{t_{k-1}}{\cup}}B(y_{k-1}^{(i)},
r_{k-1}^{(i)})$ up to a set of
measure $(1-10^{-2(k-1)})\mu (\underset{i=1}{\overset{t_{k-1}}{\cup}}B(y_{k-1}^{(i)},
r_{k-1}^{(i)}))$  if $k>2$ and $ (1-10^{-4}) \mu ( B(y_1,\delta))$  if $k=2$ by a countable number of $B(y_k^{(i)},r_k^{(i)})$
satisfying Conditions 1-4. Therefore we can cover all but a set of
measure ${(1-10^{-k})
\mu(\underset{i=1}{\overset{t_{k-1}}{\cup}}B(y_{k-1}^{(i)},
r_{k-1}^{(i)}))}$ by a finite number of $B(y_{k}^{(i)},r_k^{(i)})$
satisfying Conditions 1-4.


By Condition 1 and Corollary \ref{link gen} as the union of these balls can not have
$$\mu(\underset{i=1}{\overset{N}{\cup}} B(x_i, (\frac 1 N)^{\frac 1 {\omega}} )\cap \underset{j=1}{\overset{t_k}{\cup}}B(y_k^{(j)},r_k^{(j)}))>\alpha^{-1}(10^{-2k})\mu( \underset{i=1}{\overset{t_k}{\cup}}B(y_k^{(i)},r_k^{(i)}))$$
 for all but finitely many $N$.
 This implies that for infinitely many $N$, $(1-10^{-k})$ of the measure of $\underset{j=1}{\overset{t_k}{\cup}}B(y_k^{(i)}, r_k^{(i)})$ have
\begin{equation}
\mu(\underset{i=1}{\overset{N}{\cup}} B(x_i, (\tfrac 1 N)^{\frac 1 {\omega}}) \cap B(y_k^{(j)},r_k^{(j)}))<\alpha^{-1}(2\cdot 10^{-k})\mu (B(y_k^{(j)},r_k^{(j)})) \label{fun}
\end{equation}
 at the same time as individual balls. (As before the constant 2 depends on how closely we can divide up the measure.) Pick one of these times $N_k$, and the corresponding collection $\mathcal{U}_k$.
 Notice, for any $z \in B(y_k^{(i)},(1-(C^{\frac{4}{\omega}} 4^{-\frac{k}{\omega}})^{\frac 1 2}) r_k^{(i)})$ where $y_k^{(i)} \in \mathcal{U}_k$ we have:

\[
\mu(\underset{j=1}{\overset{N_k}{\cup}}B(x_j, (\tfrac 1 {N_k})^{\frac 1 {\omega}})\cap B(z, r_k^{(i)}-d(z,y_k^{(i)})))<4^{\frac {k}2}
\cdot \alpha^{-1}(2\cdot10^{-k}) \cdot\mu(B(z, r_k^{(i)}-d(z,y_k^{(i)}))) .
\]

This is obtained by assuming the worst case possible, $$\underset{j=1}{\overset{N_k}{\cup}}B(x_j, (\tfrac 1 {N_k})^{\frac 1 {\omega}}) \cap B(y_k^{(i)},r_k^{(i)}) \subset B(z, r_k^{(i)}-d(z,y_k^{(i)})),$$ $\mu(B(y_k^{(i)},r_k^{(i)}))$ is as large as possible and that $\mu(B(z, r_k^{(i)}-d(z,y_k^{(i)})))$ is as small as possible.

 Our set of times are $A=\{N_1,N_2,...\}$. We will show,
$$f_A^{-1}(0) \supset
 \underset{r=1}{\overset{\infty}{\cup}}
\underset{k=r}{\overset{\infty}{\cap}}
\underset{i\in \mathcal{U}_k}{\overset{}{\cup}}
B(y_k^{(i)}, (1-(C^{\frac 4{\omega}} 4^{-\frac{k}{\omega}})^{\frac 1 2})r_k^{(i)}).$$
 This has positive measure because at each step at most $10^{-k}$ of my measure is kicked out, the choice of $\mathcal{U}_k$ avoids at most $10^{-k}$ of my measure and the annuli only avoids at most $C^2 2^{\omega} (C^{\frac {4} {\omega } } 4^{-\frac{k}{\omega}})^{\frac 1 {2} }$ of my measure (by the definition of $F_{t,s}$). If
$$z \in \underset{r=1}{\overset{\infty}{\cup}}
\underset{k=r}{\overset{\infty}{\cap}}\underset{i=1}{\overset{t_k}{\cup}}B(y_k^{(i)}, 1-(C^{\frac{4}{\omega}} 4^{-\frac{k}{\omega}})^{\frac 1 2}r_k^{(i)}),$$
 then for all sufficiently large $k$, there exists $i$ such that ${|y_k^{(i)}-z|<(1-(C^{\frac{4}{\omega}} 4^{-\frac{k}{\omega}})^{\frac 1 2})r_k^{(i)}}$. A sequence of radii tending to zero is given by $r_k^{(i)}-|y_k^{(i)}-z|$. The following lemma and its corollary show that $f_A(z)=0$ in this case.

\begin{lem} Given $\epsilon>0$ there exists $m_{\epsilon}$ such that for all $m>m_{\epsilon}$
 $$\mu(\underset{i=1}{\overset{N_m}{\cup}}B(x_i, (\frac 1 {N_m})^{\frac 1 {\omega}}) \cap B(y_k^{(i)},r_k^{(i)}))<(\tfrac{10}9\cdot10^{-k}+\epsilon)\mu(B(y_k^{(i)},r_k^{(i)})).$$
\end{lem}
This follows from the fact that by condition 3
$$\mu(\underset{i=1}{\overset{t_m}{\cup}} B(y_m^{(i)},r_m^{(i)})\cap B(y_k^{(i)},r_k^{(i)})>\tfrac{10}{9}\cdot10^{-k}\mu(B(y_k^{(i)},r_k^{(i)})$$
 for any $m>k$
and that for large enough $m$: $$\alpha^{-1}(10^{-m}(r_k^{(i)}))+10^{-m}\mu(B(y_1,\delta))<\epsilon\mu(B(y_k^{(i)},r_k^{(i)})) $$ (notice that $\mathcal{U}_m$ excludes at most $10^{-m}\mu(B(y_1,\delta))$. The lemma follows from equation 2 and assuming the worst possible estimate on the portion not covered by $B(y_m^{(1)},r_m^{(1)}),...,B(y_m^{(t_m)},r_m^{(t_m)})$.

The following Corollary is immediate.
\begin{cor} If $z \in B(y_k^{(i)},(1-(C^{\frac 4{\omega}} 4^{-\frac{k}{\omega}})^{\frac 1 {2}})r_k^{(i)})$ then for all sufficiently large $m$ $$\mu(\underset{i=1}{\overset{N_m}{\cup}}B(x_i, (\tfrac 1 {N_m})^{\frac 1 {\omega}}) \cap B(z,r_k^{(i)}-d(z,y_k^{(i)}))<\mu(B(z,r_k^{(i)}-d(z,y_k^{(i)})))4^{\frac {k}{2}} (\tfrac {10}{9}\cdot10^{-k}+\epsilon).
$$
\end{cor}

\vspace{2mm}

Other direction: Assume there exist $A=\{N_1,N_2,...\}$ such that $f_A^{-1}\{0\} \supset B$ such that $\mu(B)>0$.
 By definition of $f_A$, for each $y \in B$ there exist $r_i(y)$ such that
$$\mu(\underset{j=1}{\overset{N_k}{\cup}}B(x_j, (\tfrac 1 {N_k})^{\frac 1 {\omega}}) \cap B(y,r_i(y)))<\tfrac 1 {4^i} \mu(B(y,r_i(y)))$$
 for all $k>k_i(y)$.
 There exists $k_i$ such that for a set $\mathcal{V}_i \subset B$ with ${\mu(\mathcal{V}_i)>(1-10^{-i})\mu(B)}$ we have that $y \in \mathcal{V}_i$ implies $k_i(y)<k_i$. The sequence $a_i$ is defined by $a_i=\frac 1 {N_{k_j}}$ for $N_{k_{j-1}}<i\leq N_{k_j}$. By our choice of $a_1,a_2,...$ we have that  $\underset{i=1}{\overset{\infty}{\cap}}\mathcal{V}_i$ are not density points for $\Limsup  B(x_n, (a_n)^{\frac 1 {\omega}})$. Also  $\mu(\underset{i=1}{\overset{\infty}{\cap}}\mathcal{V}_i) \geq \frac 8 9 \mu(B)$.
\end{proof}

\section{s-BC}
We now define a modification of Borel-Cantelli sequences which is related to the s-Monotone Shrinking Target Property introduced in \cite{tseng}.
\begin{Bob} A sequence $\bf{x}$ $\subset X$ is called \emph{s-BC} if for any monotonic sequence $\bf{a}$ such that $\underset{i=1}{\overset{\infty}{\sum}} (a_i)^{s}$ diverges we have \ $\Leb(\Limsup\, B(x_n,a_n))$=1.
\end{Bob}
\begin{rem} This property is interesting in the case when $s>1$. In this situation it is weaker than BC (i.e. any BC sequence is s-BC for any s$\geq 1$).
\end{rem}
\begin{lem} \label{transfer} If $s>1$ and $a_1,a_2,...$ is a decreasing sequence such that $\underset{i=1}{\overset{\infty}{\sum}} a_i^s=\infty$, then the sequence $\bf{b}$ given by $b_i=a_{\lfloor i^{s} \rfloor}$ is standard.
\end{lem}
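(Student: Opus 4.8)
## Proof Proposal for Lemma \ref{transfer}

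The plan is to verify directly that $\mathbf{b} = \{b_i\}$ with $b_i = a_{\lfloor i^s \rfloor}$ satisfies the three defining properties of a standard sequence: it is non-increasing, it consists of positive reals converging to $0$, and its sum diverges. The first two properties are essentially immediate, so the entire weight of the lemma lies in establishing the divergence $\sum_i b_i = \sum_i a_{\lfloor i^s \rfloor} = \infty$ from the hypothesis $\sum_i a_i^s = \infty$. My strategy will be to relate these two series by a comparison, exploiting the monotonicity of $\mathbf{a}$ together with the fact that the reindexing $i \mapsto \lfloor i^s \rfloor$ spreads out the indices at a rate governed by the exponent $s$.

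First I would dispatch the easy properties. Since $i \mapsto \lfloor i^s \rfloor$ is non-decreasing (as $s > 1 > 0$) and $\mathbf{a}$ is non-increasing, the composition $b_i = a_{\lfloor i^s \rfloor}$ is non-increasing; positivity of the $a_i$ passes to the $b_i$; and since $\lfloor i^s \rfloor \to \infty$ and $a_n \to 0$, we get $b_i \to 0$. The core step is the divergence of $\sum_i b_i$. Here I would group the index $n$ of the original sequence into dyadic-type blocks and count, for each $n$, how many indices $i$ satisfy $\lfloor i^s \rfloor = n$, or more robustly, estimate the number of $i$ with $\lfloor i^s \rfloor \le n$, which is roughly $n^{1/s}$. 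The key inequality to extract is of the form $\sum_{i=1}^{I} a_{\lfloor i^s \rfloor} \gtrsim \sum_{n=1}^{N} (\text{number of }i\text{ hitting }n)\, a_n$, where each $a_n$ is repeated on the order of $n^{1/s} - (n-1)^{1/s} \approx \tfrac{1}{s} n^{1/s - 1}$ times.

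The main obstacle — and the crux of the argument — is converting the divergence of $\sum a_n^s$ into divergence of a weighted sum $\sum n^{1/s - 1} a_n$ of the $a_n$ themselves (each appearing with its natural multiplicity). The cleanest route is a summation-by-parts or H\"older-type comparison: using monotonicity of $\mathbf{a}$, if $\sum a_n^s$ diverges then for infinitely many scales the tail contribution forces the $a_n$ to decay slower than $n^{-1/s}$ along a substantial set of $n$, and the weight $n^{1/s-1}$ is precisely tuned so that $\sum_n n^{1/s-1} a_n$ inherits this divergence. Concretely, one shows by contradiction: if $\sum_n n^{1/s-1} a_n < \infty$, then since $\mathbf{a}$ is non-increasing one has $a_n \lesssim n^{-1/s}$ up to a summable error, whence $a_n^s \lesssim n^{-1}\cdot(\text{small})$, contradicting $\sum a_n^s = \infty$. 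Turning this monotonicity-plus-comparison heuristic into a rigorous chain of inequalities, with the floor-function discrepancies controlled, is the step I expect to require the most care, and it is exactly the dyadic-blocking technique already exploited in the paper's treatment of $\sum i a_i$ versus $\sum a_{\lfloor \sqrt{i}\rfloor}$ in Example \ref{khinch} (the case $s = 2$), which I would adapt to general $s > 1$.
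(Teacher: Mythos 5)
Your overall reduction is workable and, once the bookkeeping is done, amounts to the paper's argument in different clothing: both hinge on the elementary fact that $c^s\le c$ for $0<c\le 1$ and $s>1$, applied after normalizing $a_n$ by the critical rate $n^{-1/s}$. (The paper gets there in two lines by applying the condensation lemma, Lemma \ref{simple}, twice: $\sum a_n^s=\infty$ iff $\sum M^{k-1}a_{M^k}^s=\sum\bigl((M^{1/s})^{k-1}a_{M^k}\bigr)^s=\infty$; since $s>1$ this forces $\sum (M^{1/s})^{k-1}a_{M^k}=\infty$, which is precisely the condensed form of $\sum_i b_i$ because $b_{(M^{1/s})^k}=a_{M^k}$. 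This sidesteps the floor-function counting you propose to do by hand, which in any case would have to be organized into dyadic blocks exactly as in Example \ref{khinch}.)

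There is, however, a genuinely wrong step at the end of your contradiction argument. From $\sum_n n^{1/s-1}a_n<\infty$ and monotonicity you can indeed deduce $n^{1/s}a_n\to 0$, hence $a_n^s=n^{-1}(n^{1/s}a_n)^s=n^{-1}\cdot o(1)$; but ``$a_n^s\lesssim n^{-1}\cdot(\text{small})$'' does \emph{not} contradict $\sum a_n^s=\infty$: the sequence $a_n^s=\frac{1}{n\log n}$ is $o(1/n)$ and still sums to infinity. The comparison must be made termwise against the series you assumed convergent, not against $\sum 1/n$. Concretely, set $t_n:=n^{1/s}a_n$; once $t_n\le 1$ (which the monotonicity argument gives for large $n$) you have $a_n^s=n^{-1}t_n^s\le n^{-1}t_n=n^{1/s-1}a_n$, so $\sum a_n^s$ converges by direct comparison with $\sum n^{1/s-1}a_n$ --- that is the contradiction you want. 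With that one inference repaired, and the lower bound $\sum_i b_i\gtrsim\sum_n n^{1/s-1}a_n$ justified by dyadic blocking, your proof goes through.
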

\begin{proof} 
Because the sequence is decreasing, $\underset{i=1}{\overset{\infty}{\sum}}a_i^s=\infty$ iff for any $M \in \mathbb{N}$ we have  $\underset{i=1}{\overset{\infty}{\sum}}M^{i-1}a_{M^i}^s=\underset{i=1}{\overset{\infty}{\sum}}((M^{\frac{1}{s}})^{i-1}a_{M^i})^s= \infty$. Because $s>1$, $\underset{i=1}{\overset{\infty}{\sum}}(M^{\frac{1}{s}})^{i-1}a_{M^i}=\infty$. This is less than or equal to $\underset{i=1}{\overset{\infty}{\sum}} (M^{\frac{1}{s}})^{i-1} b_{(M^{\frac{1}{s}})^{i}}$.
\end{proof}
The proofs of theorems in this section follow from the first section after passing to an appropriate subsequence and are omitted.
\begin{thm} If there exists $d>0$ such that for every interval $J$ the inequality
$\underset{N \to \infty}{\lim} \Leb(\underset{i=1}{\overset{N^s}{\cup}} B(x_i, \frac 1 N) \cap J) \geq d \Leb(J)$
holds, then $\bf{x}$ is $s$-Borel Cantelli.
\end{thm}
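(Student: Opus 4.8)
The plan is to imitate, in the $s$-rescaled geometry, the chain of arguments behind Theorem~\ref{suff}: produce a local lower bound on $\Leb(\Limsup B(x_n,a_n)\cap J)$ that is uniform over the admissible targets, and then promote it to full measure by the Lebesgue density theorem. Concretely, I would first reduce to showing that there is a constant $\delta>0$, depending only on $d$ and $s$, with $\Leb(\Limsup B(x_n,a_n)\cap J)\ge \delta\,\Leb(J)$ for every interval $J$ and every non-increasing $\mathbf a$ satisfying $\sum_i a_i^{\,s}=\infty$. Once this holds for all $J$, the complement of $\Limsup B(x_n,a_n)$ has no Lebesgue density points and is therefore null, exactly as in the proof of Theorem~\ref{suff}.

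Writing $M=N^s$, the hypothesis reads $\liminf_{M}\Leb(\bigcup_{i=1}^{M}B(x_i,(1/M)^{1/s})\cap J)\ge d\,\Leb(J)$, so it is the $s$-scaled analogue of the density hypothesis of Lemma~\ref{link}. I would first prove the $s$-analogue of Lemma~\ref{link}: by the clustering estimate used there (a set with no two points $1/N$-separated has union of $1/N$-balls of measure at most $3/N$), the bound forces at least $\tfrac13 d\,\Leb(J)\,N$ of the points $\{x_1,\dots,x_{N^s}\}$ lying in $J$ to be $\tfrac1N$-separated. I would then feed these into the $s$-analogue of Lemma~\ref{key}, in which block $r$ consists of the indices in $(N_{r-1}^s,N_r^s]$ and works at separation scale $1/N_r$, the divergence being supplied by $\sum_i a_i^{\,s}=\infty$. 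Here Lemma~\ref{transfer} (together with Lemma~\ref{simple}) does the essential bookkeeping: after capping each radius at the separation scale, the per-block weights $t_r$ satisfy $t_r\le 1$ and $\sum_r t_r^{\,s}=\infty$, whence $\sum_r t_r=\infty$ because $s>1$ — this is precisely the passage from $\sum a_i^{\,s}$ to a divergent series of first powers encoded in Lemma~\ref{transfer}. Summing the disjoint fresh contributions over the blocks then yields the uniform constant $\delta$.

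The step I expect to be the real obstacle is the \emph{freshness} of the blocks. In the $s=1$ proof of Lemma~\ref{link} a geometric block $[M^{r-1},M^r]$ contains only $M^{r-1}$ earlier points against $\sim M^r$ newly $1/M^r$-separated points, so discarding the earlier indices costs almost nothing. In the $s$-scaling a geometric block $[M^{(r-1)s},M^{rs}]$ contains $M^{(r-1)s}$ earlier points but still only $\sim M^r$ separated points, and $M^{(r-1)s}\gg M^r$ for $s>1$ and large $r$; thus one cannot guarantee that the separated points extracted from $\{x_1,\dots,x_{M^{rs}}\}$ are new. I would resolve this by choosing the thresholds $N_r$ to grow super-geometrically, so that $N_{r-1}^s=o(N_r)$ (for instance $N_r=N_{r-1}^{2s}$); then the $N_{r-1}^s$ earlier points are negligible against the $\sim d\,N_r$ separated points, and at least a fixed fraction of the latter lie in the fresh block $(N_{r-1}^s,N_r^s]$. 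The price is that the blocks become very sparse, so I must still check that $\sum_i a_i^{\,s}=\infty$ forces the per-block weights to diverge through such thin blocks; this is again handled by the $t_r\le1$, $s>1$ upgrade above, and it is the one place where the naive ``pass to the subsequence $x_{\lfloor i^s\rfloor}$'' shortcut breaks down and must be replaced by an adaptive, block-by-block extraction of the separated points.
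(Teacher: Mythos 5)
You have correctly reproduced the outer structure of the paper's argument (a local lower bound on $\Leb(\Limsup B(x_n,a_n)\cap J)$ uniform over admissible $\mathbf a$, followed by the density-point step of Theorem \ref{suff}), and the paper indeed routes the proof through Lemma \ref{transfer} and the $s$-analogue Lemma \ref{key sbc} of Lemma \ref{key}, whose blocks are the \emph{geometric} blocks $\{x_{M^{(r-1)s}},\dots,x_{M^{rs}}\}$ at separation scale $e/M^r$. You are also right to flag freshness as the delicate point: for $s>1$ the $M^{(r-1)s}$ earlier indices eventually outnumber the $\sim d M^r$ points that are $1/M^r$-separated. The trouble is that your repair destroys the other half of the argument. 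If the thresholds $N_r$ are super-geometric, with $N_{r-1}^s=o(N_r)$, then Cauchy condensation (Lemma \ref{simple}) along the blocks $(N_{r-1}^s,N_r^s]$ no longer preserves divergence: from $\sum_n a_n^s=\infty$ you cannot conclude that $\sum_r (N_r a_{N_r^s})^s=\infty$, so the ``$t_r\le 1$, $s>1$'' upgrade has nothing to act on. Concretely, take $a_n=(n\log n)^{-1/s}$, so that $\sum_n a_n^s=\sum_n 1/(n\log n)=\infty$. Then $N_r a_{N_r^s}=(s\log N_r)^{-1/s}$, and with your choice $N_r=N_{r-1}^{2s}$ one has $\log N_r=(2s)^r\log N_0$, whence $\sum_r N_r a_{N_r^s}<\infty$ and even $\sum_r (N_r a_{N_r^s})^s<\infty$. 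The total fresh measure collected over your sparse blocks is then finite, and the uniform $\delta$ you need does not exist.

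Worse, the two requirements you are trying to satisfy are incompatible: the condensation step forces $N_r/N_{r-1}$ to stay bounded, while your freshness condition $N_{r-1}^s\le\epsilon N_r$ forces $N_r/N_{r-1}\ge N_{r-1}^{s-1}/\epsilon\to\infty$ when $s>1$. So no choice of block thresholds makes both steps of your argument work as written, and the gap must be closed elsewhere --- for instance by not discarding outright the separated points of index $\le N_{r-1}^s$ (their balls carry the larger radii $a_i$ and were charged to earlier scales, so a finer accounting of which scale each ball is charged to is what is needed), which is the content that the paper compresses into Lemma \ref{key sbc} and the phrase ``after passing to an appropriate subsequence.'' As it stands, your proposal does not close.
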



The key lemma in this setting is:  \mbc{check}

\begin{lem} \label{key sbc} Let $M\in \mathbb{N}, c>0, e>0$ be constants, let $\bf{x}$ be a sequence in $\ui$ and $\bf{a}$ be a decreasing sequence such that\, $\sum_{n=1}^{\infty}\limits a_n^s=\infty$. If for all $r\in\mathbb{N}$  at least $cM^r$ of the points in the set \
$\{x_{M^{(r-1)s}}, x_{M^{(r-1)s}+1},..., x_{M^{sr}}\}$ are $\frac e {M^r}$ separated from each other,
then there exists $\delta>0$ depending only on $c$ and $e$ such that
\[
\Leb (\Limsup  B(x_n,a_n))> \delta.
\]
\end{lem}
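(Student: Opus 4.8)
The plan is to follow the proof of Lemma~\ref{key} as closely as possible, with one essential modification. As there, it suffices to produce $\delta>0$ (depending only on $c$ and $e$) with $\Leb\big(\bigcup_{i\geq k_0}B(x_i,b_i)\big)>\delta$ for every $k_0$, where $b_i$ is a truncated radius; since the $b_i$-balls I use satisfy $b_i\leq a_i$, this lower-bounds $\Leb(\Limsup B(x_n,a_n))$ after letting $k_0\to\infty$. For each block $r$ fix a set $P_r$ of $\geq cM^r$ of the prescribed $\tfrac{e}{M^r}$-separated points among $\{x_{M^{(r-1)s}},\dots,x_{M^{sr}}\}$, and for $i\in P_r$ set $b_i=\min\{a_{M^{sr}},\tfrac{e}{2M^r}\}$. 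Because $\mathbf a$ is decreasing and $i\leq M^{sr}$ we have $a_i\geq a_{M^{sr}}\geq b_i$, so $B(x_i,b_i)\subset B(x_i,a_i)$. The crucial departure from Lemma~\ref{key} is that I form the union only over the separated indices $\bigcup_r P_r$, discarding all other balls; this does not hurt the lower bound, but it is what keeps the number of connected components of the partial unions under control.

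First I would verify the divergence that drives the argument, namely $\sum_r M^r\,b_r=\sum_r\min\{M^r a_{M^{sr}},\tfrac e2\}=\infty$. If the minimum equals $\tfrac e2$ infinitely often this is immediate, so assume eventually $M^rb_r=M^ra_{M^{sr}}$ and show $\sum_r M^r a_{M^{sr}}=\infty$. Applying Lemma~\ref{simple} to the sequence $\{a_n^s\}$ with base $M^s$ gives $\sum_i M^{s(i-1)}a_{M^{si}}^s=\infty$, i.e.\ $\sum_i d_i^s=\infty$ with $d_i=M^{i-1}a_{M^{si}}$. Since $s>1$, $\sum_i d_i^s=\infty$ forces $\sum_i d_i=\infty$ (if $\sum d_i<\infty$ then $d_i\to0$, so eventually $d_i<1$ and $d_i^s<d_i$, whence $\sum d_i^s<\infty$) --- this is exactly the computation in Lemma~\ref{transfer}. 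Hence $\sum_r M^r a_{M^{sr}}=\infty$.

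The main work, and the main obstacle, is the counting step. Set $\delta=\tfrac{ce}{4}$ and suppose $\Leb(U_{<r})<\delta$ for the partial union $U_{<r}=\bigcup_{r_0\le r'<r}\bigcup_{i\in P_{r'}}B(x_i,b_i)$ (otherwise we are done). A separated point $x_t\in P_r$ is ``killed'' if $B(x_t,b_r)$ meets $U_{<r}$, which forces $x_t$ into the $\tfrac{e}{2M^r}$-neighborhood $W$ of $U_{<r}$. The number of $\tfrac{e}{M^r}$-separated points in $W$ is at most $\Leb(W)\tfrac{M^r}{e}+\#\mathrm{comp}(W)$, and both terms must be controlled by \emph{measure}, not by the number of previous balls --- this is precisely where Lemma~\ref{key}'s estimate breaks down, since here the previous blocks contain $\sim M^{(r-1)s}\gg M^r$ indices. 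The restriction to separated balls rescues this: each $P_{r'}$ has at most $M^{r'}/e$ elements (that many $\tfrac{e}{M^{r'}}$-separated points fit in $\ui$), so $\#\mathrm{comp}(U_{<r})\le\sum_{r'<r}|P_{r'}|\le \tfrac{M^r}{e(M-1)}$, giving $\Leb(W)\le\delta+\tfrac1{M-1}$ and hence at most $M^r\big(\tfrac\delta e+\tfrac{2}{e(M-1)}\big)$ killed points. Thus at least $M^r\big(c-\tfrac\delta e-\tfrac{2}{e(M-1)}\big)$ separated points survive, each contributing a disjoint new ball of measure $2b_r$.

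Finally I would make the surviving count a fixed positive multiple of $M^r$. Taking $M$ large enough that $\tfrac{2}{e(M-1)}<\tfrac c4$ yields at least $\tfrac c2 M^r$ survivors, so $\Leb(U_{<r+1})-\Leb(U_{<r})\ge c M^r b_r$. To legitimately enlarge $M$ I would replace $M$ by a power $M^k$: the base-$M^k$ block $r$ contains the base-$M$ block $kr$, which already supplies $\ge c(M^k)^r$ points that are $\tfrac{e}{(M^k)^r}$-separated, so the hypothesis persists verbatim with the \emph{same} $c$ and $e$ (hence the same $\delta$). Summing the increments over $r$ and invoking the divergence of $\sum_r M^rb_r$ forces $\Leb(U_{<R})\to\infty$, which is absurd; therefore $\Leb(U_{<R})\ge\delta$ for some finite $R$, completing the proof with $\delta=\tfrac{ce}{4}$ depending only on $c$ and $e$.
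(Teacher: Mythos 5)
Your proof is correct, and it is essentially the argument the paper intends: the paper omits the proof with the remark that it ``follows from the first section after passing to an appropriate subsequence,'' and your restriction of the union to the separated index sets $P_r$ (whose cardinalities $\leq M^{r}/e$ keep the component count of the partial unions at scale $M^r$ rather than $M^{rs}$) is exactly that subsequence, with Lemma~\ref{transfer}'s computation supplying the needed divergence of $\sum_r M^r b_r$. You have correctly identified and resolved the one point where the proof of Lemma~\ref{key} does not carry over verbatim.
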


\begin{Bob}Let $A=\{N_1,N_2,...\}$ be an infinite increasing sequence of natural numbers. \mbc{what?}
 Given $\bf{x}$, define $f_A(z):= \underset{ r \to 0^+}{\liminf} \ \underset{ N \in A }{\limsup}\ \frac{\Leb(\underset{i=1}{\overset{N^s}{\cup}}B(x_i, \frac 1 N) \cap B(z,r))} {\Leb(B(z,r))}$.
\end{Bob}
\begin{thm} A sequence is not $s$-BC if and only if there exists a sequence \mbox{$A=\{N_1,N_2,...\}$} such that $\Leb(f_A^{-1}(\{0\}))>0$.
\end{thm}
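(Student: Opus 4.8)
The plan is to mirror the proof of Theorem~\ref{nec suff}, with the single structural change that the role played there by ``the first $N$ points at radius $\frac1N$'' is now played by ``the first $N^s$ points at radius $\frac1N$'', so that the covering objects match the definition of $f_A$ in this section. The device that lets the $\ui$-arguments transfer is the reindexing $b_i=a_{\lfloor i^s\rfloor}$ of Lemma~\ref{transfer}: it converts a decreasing sequence with $\sum a_i^s=\infty$ into a standard sequence, and it is the exact $s$-analogue of the dyadic bookkeeping (Lemma~\ref{simple}) used throughout Section~2. With Lemma~\ref{key sbc} in hand, the block $\{x_{M^{(r-1)s}},\dots,x_{M^{sr}}\}$ replaces the block $\{x_{M^{r-1}},\dots,x_{M^r}\}$ of Lemma~\ref{key}, and everything downstream goes through.

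First I would record the $s$-versions of Lemma~\ref{link} and Corollary~\ref{link2}: if $\liminf_{N\to\infty}\Leb\big(\bigcup_{i=1}^{N^s}B(x_i,\frac1N)\cap J\big)\ge d\,\Leb(J)$ for an interval (or finite union of intervals) $J$, then $\Leb(\Limsup B(x_n,a_n)\cap J)>\constant\, d\,\Leb(J)$ for every decreasing $\bf a$ with $\sum a_i^s=\infty$. The proof is that of Lemma~\ref{link} word for word: a covering of local density $\ge d$ forces, of order $d\,\Leb(J)\,N$ of the points $x_1,\dots,x_{N^s}$ to be $\frac1N$-separated inside $J$ (clusters with no $\frac1N$-separation have measure $\le\frac3N$), and feeding these into Lemma~\ref{key sbc} (with $e=1$ and $M$ large) yields the asserted lower bound. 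For the ``only if'' direction I then take a decreasing $\bf a$ with $\sum a_i^s=\infty$ witnessing the failure of the $s$-BC property, set $S=\Limsup B(x_n,a_n)$ with $\Leb(S^c)>0$, and run the nested density-point / covering construction of Theorem~\ref{nec suff} unchanged, invoking the $s$-version of Corollary~\ref{link2} at each stage to select scales $N_k$ at which the local density of $\bigcup_{i=1}^{N_k^s}B(x_i,\frac1{N_k})$ is small on most of the balls. The resulting $A=\{N_1,N_2,\dots\}$ satisfies $\Leb(f_A^{-1}(0))>0$ verbatim as before.

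For the ``if'' direction, given $A=\{N_1,N_2,\dots\}$ with $\Leb(f_A^{-1}(0))>0$, I extract for Lebesgue-almost every $z$ in this set radii $r_i(z)\to0$ and thresholds $k_i(z)$ so that $\Leb\big(\bigcup_{j=1}^{N_k^s}B(x_j,\frac1{N_k})\cap B(z,r_i(z))\big)<4^{-i}\cdot 2r_i(z)$ for $k>k_i(z)$, pass to positive-measure sets $\mathcal V_i$ on which $k_i(z)<k_i$ is uniform, and define the staircase test sequence $a_m=\frac1{N_{k_j}}$ for $N_{k_{j-1}}^s<m\le N_{k_j}^s$. This $\bf a$ is decreasing, and each block contributes $\approx\big(N_{k_j}^s-N_{k_{j-1}}^s\big)\cdot N_{k_j}^{-s}\approx1$ to $\sum_m a_m^s$, so $\sum_m a_m^s=\infty$; the exponent $s$ on the index ranges is exactly what makes $\bigcup_{m\le N_{k_j}^s}B(x_m,a_m)\subset\bigcup_{i=1}^{N_{k_j}^s}B(x_i,\frac1{N_{k_j}})$ agree with the object controlled by $f_A$. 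On $\bigcap_i\mathcal V_i$, of measure $\ge\frac89\Leb(f_A^{-1}(0))>0$, the local density of the tail $\bigcup_{m>N_{k_J}^s}B(x_m,a_m)$ is bounded by $\sum_{j>J}4^{-j}$ at the scales $r_i(z)$, so no such $z$ is a density point of $\Limsup B(x_n,a_n)$; hence that limsup is not of full measure and $\bf x$ is not $s$-BC.

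The main obstacle is not the covering combinatorics, which are identical to Theorem~\ref{nec suff}, but the divergence bookkeeping: one must check that the $s$-staircase $a_m=\frac1{N_{k_j}}$ on $N_{k_{j-1}}^s<m\le N_{k_j}^s$ genuinely satisfies $\sum_m a_m^s=\infty$ while remaining decreasing, and, dually, that in Lemma~\ref{key sbc} the block sums over $M^{(r-1)s}\le i\le M^{sr}$ at separation $\frac1{M^r}$ accumulate a divergent amount of new measure. Both reduce to applying Lemma~\ref{simple} (equivalently Lemma~\ref{transfer}) with base $M^s$ rather than $M$, which is the only place the exponent $s$ does any real work.
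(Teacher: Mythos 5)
Your overall architecture --- reduce to the Section~2 machinery by replacing ``first $N$ points at radius $\tfrac1N$'' with ``first $N^s$ points at radius $\tfrac1N$'', use Lemma~\ref{transfer}/Lemma~\ref{simple} for the divergence bookkeeping, and rerun the nested covering of Theorem~\ref{nec suff} --- is exactly what the paper intends (it omits the proof with the remark that the results ``follow from the first section after passing to an appropriate subsequence''). Your ``if'' direction is essentially sound: the staircase $a_m=\tfrac1{N_{k_j}}$ on $N_{k_{j-1}}^s<m\le N_{k_j}^s$ is decreasing, $\sum_m a_m^s=\sum_j\bigl(1-(N_{k_{j-1}}/N_{k_j})^s\bigr)$ diverges automatically (the partial products $(N_{k_0}/N_{k_J})^s$ tend to $0$), and the tail of $\Limsup B(x_n,a_n)$ is controlled near $f_A^{-1}(0)$ by the sets appearing in $f_A$. (Your displayed inclusion $\bigcup_{m\le N_{k_j}^s}B(x_m,a_m)\subset\bigcup_{i\le N_{k_j}^s}B(x_i,\tfrac1{N_{k_j}})$ is backwards as written --- earlier blocks carry larger radii --- but only the single block $N_{k_{j-1}}^s<m\le N_{k_j}^s$ is needed, where it is an equality of radii, so this is a slip, not a gap.)

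The genuine gap is in your claim that the $s$-version of Lemma~\ref{link} follows ``word for word.'' The step in Lemma~\ref{link} that localizes the separated points into the dyadic block reads: $\{x_{M^{r-1}},\dots,x_{M^r}\}$ contains at least $M^r(d\Leb(J)-\epsilon)-M^{r-1}$ separated points, the subtracted term being the \emph{cardinality of the prefix}. In the $s$-setting the prefix is $\{x_1,\dots,x_{M^{(r-1)s}}\}$, of cardinality $M^{(r-1)s}$, which for $s>1$ eventually dwarfs $dM^r$; the subtraction gives a negative number, and indeed one can build sequences satisfying the density hypothesis for which the block $\{x_{M^{(r-1)s}},\dots,x_{M^{rs}}\}$ required by Lemma~\ref{key sbc} contains \emph{no} $\tfrac{e}{M^r}$-separated points for infinitely many $r$ (front-load all the spreading into the prefix and pad the block with a constant). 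So Lemma~\ref{key sbc} cannot be fed as you propose, and since Corollary~\ref{link2} is invoked contrapositively at every stage of the nested construction, the forward direction collapses at this point. The repair is to restate the key lemma for tails of prefixes rather than dyadic blocks: assume that for every $k_0$ and all large $r$ at least $cM^r$ of the points $\{x_{k_0},\dots,x_{M^{rs}}\}$ are $\tfrac{e}{M^r}$-separated in $J$. This \emph{does} follow from the density hypothesis (subtract only $\Leb\bigl(\bigcup_{i\le k_0}B(x_i,\tfrac1N)\bigr)\le \tfrac{2k_0}{N}$, which is harmless for large $N$), and the proof of Lemma~\ref{key} goes through because monotonicity of $\bf a$ lets you assign every such point the radius $\min\{a_{M^{rs}},\tfrac{e}{2M^r}\}\le a_i$; the new-measure count at stage $r$ is then $\gtrsim\bigl(c-\tfrac{\delta}{e}\bigr)t_r$ with $t_r=M^r\min\{a_{M^{rs}},\tfrac{e}{2M^r}\}$, and $\sum_r t_r=\infty$ follows from $\sum_r t_r^s=\infty$ (Lemma~\ref{simple} with base $M^s$) together with $t_r\le\tfrac e2$ --- the one place, as you say, where the exponent $s$ does real work. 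With the key lemma so modified, the rest of your argument is correct.
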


\begin{rem} The theorems in this section could be rephrased using $\Leb(\underset{i=1}{\overset{N}{\cup}}B(x_i, (\frac 1 N)^s))$. Similarly the theorems in the last section could be rephrased using $\mu (\underset{i=1}{\overset{N^{\omega}}{\cup}}B(x_i, \frac 1 N))$. One can treat $s$-Borel-Cantelli in Ahlfors regular spaces of dimension $\omega$.
\end{rem}
\section{Properties}
For completeness we include some basic properties of BC sequences.
\begin{prop} If $k_1,k_2,...\subset \mathbb{N}$ has positive lower density and $x_{k_1},x_{k_2},...$ is BC then so is $x_1,x_2,...$ .
\end{prop}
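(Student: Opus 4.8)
The plan is to show that if the subsequence $\{x_{k_j}\}$ is BC, then the full sequence $\{x_n\}$ inherits the BC property, using the positive lower density hypothesis together with the sufficient-condition machinery already developed. The cleanest route is to reduce everything to the necessary-and-sufficient condition of Theorem \ref{nec suff}, or alternatively to argue directly that a standard sequence for $\mathbf{x}$ can be ``thinned'' to a standard sequence for the subsequence without losing mass. Since $\{k_1,k_2,\dots\}$ has positive lower density, there is a constant $\theta>0$ with $\#\{j : k_j \leq N\} \geq \theta N$ for all large $N$; this is the key quantitative input, as it guarantees that among the first $N$ indices, at least a $\theta$-fraction belong to the subsequence.

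First I would take an arbitrary standard sequence $\mathbf{a}=\{a_n\}$ and must show $\Leb(\Limsup B(x_n,a_n))=1$. The natural idea is to compare $\Limsup B(x_n,a_n)$ with a $\limsup$ built only from the subsequence terms. The obstacle is that the radii $a_{k_j}$ restricted to the subsequence indices need not form a standard sequence for the reindexed points $\{x_{k_j}\}$: monotonicity is preserved, but divergence of $\sum_j a_{k_j}$ is not automatic. To fix this I would use positive lower density via Lemma \ref{simple}-style comparison: because $\{k_j\}$ has lower density $\theta>0$, for large $j$ we have $k_j \leq \tfrac{2}{\theta} j$, hence $a_{k_j} \geq a_{\lfloor (2/\theta) j\rfloor}$, and a dyadic-block comparison (grouping indices in ranges $[M^{r-1},M^r)$) shows $\sum_j a_{k_j}=\infty$ whenever $\sum_n a_n=\infty$. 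Thus the reindexed sequence $\mathbf{y}=\{y_j\}$ with $y_j=x_{k_j}$ together with radii $b_j = a_{k_j}$ forms (after at most harmless truncation) a standard sequence for $\mathbf{y}$.

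Then, since $\mathbf{y}$ is BC, we get $\Leb\big(\Limsup_j B(y_j, b_j)\big)=1$. The final step is the inclusion $\Limsup_j B(x_{k_j}, a_{k_j}) \subseteq \Limsup_n B(x_n, a_n)$, which holds trivially because every ball on the left appears among the balls on the right with matching radius: if a point lies in infinitely many $B(x_{k_j},a_{k_j})$, it lies in infinitely many $B(x_n,a_n)$. Therefore $\Leb(\Limsup_n B(x_n,a_n)) \geq \Leb(\Limsup_j B(x_{k_j},a_{k_j}))=1$, completing the proof.

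The step I expect to be the main obstacle is verifying that $\sum_j a_{k_j}=\infty$, i.e.\ that positive lower density really does preserve divergence under the monotonicity constraint. This is where the hypothesis is essential and where one must be careful: lower density only controls the \emph{count} of indices below $N$, not their precise location, so I would phrase the comparison through the monotone rearrangement bound $a_{k_j}\geq a_{\lceil j/\theta'\rceil}$ (valid for any $\theta'<\theta$ and large $j$) and then invoke the standard fact that $\sum_n a_n=\infty$ implies $\sum_j a_{\lceil j/\theta'\rceil}=\infty$ for monotone $\mathbf{a}$ (each term $a_n$ is repeated roughly $1/\theta'$ times). Once this divergence is secured, the rest is immediate from the definition of BC and the elementary $\limsup$ inclusion.
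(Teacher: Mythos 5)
Your proof is correct. The paper states this proposition without proof, and your argument --- passing to the radii $b_j=a_{k_j}$, using positive lower density together with monotonicity of $\mathbf{a}$ to verify $\sum_j a_{k_j}=\infty$, and concluding via the trivial inclusion $\Limsup_j B(x_{k_j},a_{k_j})\subseteq\Limsup_n B(x_n,a_n)$ --- is precisely the natural argument the authors presumably intended.
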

\begin{prop} If $x_1,x_2,...$ is Borel-Cantelli and $k_1,k_2,... \subset \mathbb{N}$ has density 1 then $x_{k_1},x_{k_2},...$ is Borel-Cantelli.
\end{prop}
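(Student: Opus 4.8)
The plan is to argue the contrapositive and invoke the necessary-and-sufficient criterion of Theorem \ref{nec suff}: I will show that if the subsequence $\{x_{k_j}\}$ fails to be BC, then so does the full sequence $\{x_n\}$. Write $K=\{k_1<k_2<\cdots\}$; density $1$ means $|K^c\cap[1,m]|=o(m)$, equivalently $k_N/N\to 1$ and $|K^c\cap[1,k_N]|=k_N-N=o(k_N)$. Denote by $g_A$ the function of the Definition of $f_A$ preceding Theorem \ref{nec suff}, but formed from the subsequence, i.e. $g_A(z)=\liminf_{r\to0^+}\limsup_{N\in A}\frac{\Leb(\bigcup_{j=1}^N B(x_{k_j},\frac1N)\cap B(z,r))}{\Leb(B(z,r))}$ (measurable by the argument of the measurability lemma applied to $\{x_{k_j}\}$). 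If $\{x_{k_j}\}$ is not BC, Theorem \ref{nec suff}, applied to the sequence $\{x_{k_j}\}$, furnishes an $A$ with $\Leb(g_A^{-1}(0))>0$.

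The main step is to compare the subsequence union with a full-sequence union taken along the index set $A'=\{k_N:N\in A\}$ (which indexes the \emph{full} sequence). Put $U_N=\bigcup_{j=1}^N B(x_{k_j},\frac1N)=\bigcup_{i\in K\cap[1,k_N]}B(x_i,\frac1N)$ and $V_{k_N}=\bigcup_{i=1}^{k_N}B(x_i,\frac1{k_N})$, noting that $k_1,\dots,k_N$ are exactly the elements of $K\cap[1,k_N]$. Splitting $[1,k_N]$ into its $K$- and $K^c$-parts and using $\frac1{k_N}\le\frac1N$ gives the containment $\bigcup_{i\in K\cap[1,k_N]}B(x_i,\frac1{k_N})\subset U_N$, so that
\[
\Leb(V_{k_N}\cap B(z,r))\le \Leb(U_N\cap B(z,r))+\eta_N,\qquad \eta_N:=|K^c\cap[1,k_N]|\tfrac{2}{k_N}=2\tfrac{k_N-N}{k_N}\to0,
\]
where the error $\eta_N$ is a single global additive quantity, independent of $z$ and $r$.

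The crucial observation is the order of the two limiting operations in $f_A$. Fix $r>0$; dividing by $\Leb(B(z,r))=2r$ and taking $\limsup_{N\in A}$, the term $\eta_N/(2r)$ tends to $0$ (for fixed $r$, as $N\to\infty$), so $\limsup_{N\in A}\frac{\Leb(V_{k_N}\cap B(z,r))}{2r}\le\limsup_{N\in A}\frac{\Leb(U_N\cap B(z,r))}{2r}$. Taking $\liminf_{r\to0^+}$ now yields, for the full sequence along $A'$, the pointwise inequality $f_{A'}(z)\le g_A(z)$ for every $z$. Hence $g_A^{-1}(0)\subset f_{A'}^{-1}(0)$ and $\Leb(f_{A'}^{-1}(0))\ge\Leb(g_A^{-1}(0))>0$, so by Theorem \ref{nec suff} the sequence $\{x_n\}$ is not BC, contradicting the hypothesis. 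Therefore $\{x_{k_j}\}$ is BC.

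I expect the only real obstacle to be the density normalization: the discarded gap balls have total measure only $o(1)$, but once divided by $2r$ this could a priori blow up as $r\to0$. What makes the argument go through is precisely that $f_A$ performs $\limsup_{N}$ \emph{before} $\liminf_r$, so for each fixed $r$ the additive error is killed as $N\to\infty$ before $r$ is sent to $0$. It is worth noting why the more naive strategy fails: one would lift $\{b_j\}$ to a standard sequence $\{a_n\}$ on $\mathbb{N}$ with $a_{k_j}=b_j$ and tiny gap values, then try to kill the contribution of the indices $n\notin K$ by the Borel--Cantelli Theorem. But monotonicity forces each gap value to be at least the next $K$-value, so $\sum_{n\notin K}a_n\ge\sum_j(k_{j+1}-k_j-1)\,b_{j+1}$, and this can diverge (for instance $b_j=1/j$ with $|K^c\cap[1,m]|\sim m/\log m$, giving $\sum_j \frac{1}{j\log j}=\infty$); then the limsup over $n\notin K$ need not be null, and the decomposition approach breaks down.
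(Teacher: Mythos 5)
Your proof is correct. The paper states this proposition in Section 5 without any proof (only the weak-$*$ and $\ell^p$ propositions there come with arguments), so there is nothing to compare line by line; but your route through Theorem \ref{nec suff} is exactly in the spirit of how the paper handles the other propositions in that section, both of which also reduce to exhibiting or transporting a set where $f_A$ vanishes. The two substantive points are both handled correctly: (i) the containment $\bigcup_{i\in K\cap[1,k_N]}B(x_i,\tfrac1{k_N})\subset\bigcup_{j=1}^{N}B(x_{k_j},\tfrac1N)$ together with the bound $\Leb\big(\bigcup_{i\in K^c\cap[1,k_N]}B(x_i,\tfrac1{k_N})\big)\le 2(k_N-N)/k_N\to0$, which is where density $1$ (and not merely positive density) is used; and (ii) the observation that in $f_A$ the $\limsup_N$ is taken \emph{before} the $\liminf_{r\to0^+}$, so the additive $o(1)$ error is absorbed at each fixed $r$ and the pointwise inequality $f_{A'}\le g_A$ with $A'=\{k_N:N\in A\}$ follows. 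Your closing remark about why the naive lift of a standard sequence $\{b_j\}$ to $\{a_n\}$ fails (monotonicity forces the gap terms to contribute a possibly divergent series, so the Borel--Cantelli Theorem cannot kill the $K^c$ indices) is a genuine obstruction and justifies going through the necessary-and-sufficient criterion rather than a direct decomposition.
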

\begin{rem} This proposition states that the property of being Borel-Cantelli survives the deletion of any sequence of density 0. The same need not be true for sequences of positive upper density (even in they have lower density 0).
\end{rem}
\begin{Bob} Given a sequence $\bf x$ we say a measure $\nu$ is a weak-* limit point of $\bf x$ if it is a weak-* limit point of the sequence of measures 
\[
\{\delta_{x_1}, \tfrac 1 2 (\delta_{x_1}+\delta_{x_2}),...,\tfrac 1 N\, \underset{i=1}{\overset{N}{\sum}} \delta_{x_i},...\}
\] 
where $\delta_z$ denotes point mass at $z$.
\end{Bob}
\begin{prop} If $\bf{x}$ is a Borel-Cantelli sequence in $\ui$ then Lebesgue measure is absolutely continuous with respect to its weak-* limit points.
\end{prop}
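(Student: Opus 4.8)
The plan is to argue by contraposition: I will show that if Lebesgue measure $\Leb$ is not absolutely continuous with respect to some weak-$*$ limit point $\nu$ of $\bf x$, then $\bf x$ cannot be BC. By the Radon-Nikodym / Lebesgue decomposition, failure of $\Leb \ll \nu$ means there is a Borel set $E \subset \ui$ with $\Leb(E)>0$ but $\nu(E)=0$. Since $\nu$ is a weak-$*$ limit of the averaged empirical measures $\frac1N\sum_{i=1}^N \delta_{x_i}$ along some subsequence $N \in A$, the mass of these empirical measures concentrated near $E$ must vanish in the limit. Concretely, for any open neighborhood $U \supset E$ we have $\limsup_{N\in A}\frac1N\#\{i\leq N : x_i \in U\} \leq \nu(\overline U)$ (up to boundary effects), and by outer regularity I can choose $U$ with $\nu(\overline U)$ as small as desired while keeping $\Leb(U) \geq \Leb(E)>0$.

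The key step is to convert this scarcity of the $x_i$ in a region of positive Lebesgue measure into the hypothesis of Theorem~\ref{nec} (or directly of Theorem~\ref{nec suff}). First I would fix a density point $z$ of $E$ and a small interval $J$ around it with $\Leb(E\cap J)$ comparable to $\Leb(J)$. The count $\#\{i\leq N : x_i \in J\}$ is $\tfrac1N \cdot N$ times the empirical mass of $J$, which along $A$ is at most roughly $\nu(\overline J)\,N$, and I arrange $\nu(\overline J)$ small. Now the union $\bigcup_{i=1}^N B(x_i,\tfrac1N)$ can only cover a point of $J$ that sits at distance $<\tfrac1N$ from its boundary unless some $x_i \in J'$ (a slight enlargement of $J$) with $i\leq N$; the total Lebesgue measure such balls contribute inside $J$ is at most $\#\{i\leq N : x_i\in J'\}\cdot\tfrac2N$, which along $A$ is bounded by $2\,\nu(\overline{J'})$ plus a vanishing error. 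Choosing $J'$ so that $\nu(\overline{J'})<\epsilon\Leb(J)$ gives, for arbitrarily large $N\in A$,
\[
\Leb\Big(\bigcup_{i=1}^{N} B\big(x_i,\tfrac1N\big)\cap J\Big) < \epsilon\,\Leb(J),
\]
which is exactly the hypothesis of Theorem~\ref{nec}. That theorem then yields that $\bf x$ is not BC, completing the contrapositive.

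The main obstacle I anticipate is the boundary/overshoot bookkeeping in passing from "empirical mass of $x_i$ near $E$ is small" to "the covered measure inside $J$ is small." Weak-$*$ convergence only controls $\nu(\overline U)$ for sets whose boundary is $\nu$-null, so I must either work with open sets and $\liminf$, or choose the radii of $J,J'$ outside a countable bad set to avoid atoms of $\nu$ on the boundary; this is routine but requires care. A secondary subtlety is that the balls $B(x_i,\tfrac1N)$ have radius $\tfrac1N$ while the averaging is also over $N$ terms, so the enlargement $J'\supset J$ by $\tfrac1N$ shrinks to $J$ and contributes no extra mass in the limit — I would make this precise by taking $N\to\infty$ along $A$ after fixing $J$. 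Everything else is a direct application of the already-established necessary condition.
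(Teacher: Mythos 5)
Your overall strategy---contraposition, extracting a set $E$ with $\Leb(E)>0$ and $\nu(E)=0$, and converting scarcity of empirical mass into one of the paper's necessary conditions---is the right one, and in spirit it is the paper's. But there is a genuine gap in your reduction to Theorem~\ref{nec}: that theorem requires a \emph{single fixed} interval $J$ that works for \emph{every} $\epsilon>0$, whereas your bound on $\Leb\big(\bigcup_{i\le N}B(x_i,\tfrac1N)\cap J\big)$ is (up to constants and vanishing errors) $2\,\nu(\overline{J'})$ with $J'\supset J$ a fixed enlargement. Since $J'\supset J$, making this smaller than $\epsilon\Leb(J)$ for all $\epsilon$ forces $\nu(J)=0$; but $\Leb\not\ll\nu$ supplies no interval of zero $\nu$-measure. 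For example $\nu=\sum_n 2^{-n}\delta_{q_n}$ with $\{q_n\}$ an enumeration of the rationals is singular to $\Leb$ yet charges every interval, so the step ``choose $J'$ so that $\nu(\overline{J'})<\epsilon\Leb(J)$'' becomes impossible as soon as $\epsilon<\nu(J)/\Leb(J)$.

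The repair is to let the ball shrink with $\epsilon$, which is exactly what the function $f_A$ and Theorem~\ref{nec suff} are built for, and is the route the paper takes. Take $S$ with $\nu(S)=1$ and $\Leb(S)<1$. By the differentiation theorem for measures, at $\Leb$-almost every $z\in S^c$ the symmetric derivative of $\nu$ with respect to $\Leb$ vanishes, so $\nu(\overline{B(z,2r)})/(2r)\to0$ as $r\to0$. For such $z$ and fixed $r$, weak-$*$ convergence along $A$ gives $\limsup_{N\in A}\tfrac1N\#\{i\le N: x_i\in B(z,2r)\}\le\nu(\overline{B(z,2r)})$, and for $N>1/r$ only such $x_i$ can contribute to $\bigcup_{i\le N}B(x_i,\tfrac1N)\cap B(z,r)$, each contributing at most $2/N$; hence $f_A(z)\le\liminf_{r\to0}2\,\nu(\overline{B(z,2r)})/(2r)=0$. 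Thus $\Leb(f_A^{-1}(0))\ge\Leb(S^c)>0$ and Theorem~\ref{nec suff} shows $\bf x$ is not BC. You in fact gesture at both needed ingredients (density points, and ``or directly Theorem~\ref{nec suff}'' in parentheses), but the argument as written is anchored to a fixed $J$ and does not close.
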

\begin{proof} Pick a sequence $N_1,N_2,...$ so that $\frac 1 N_j \underset{i=1}{\overset{N_j}{\sum}} \delta_{x_i}$ weak-* converges to a measure $\nu$ that does not have full support. Pick a set $S$ such that $\nu(S)=1$ but $\Leb(S)<1$. By the Lebesgue density theorem and what weak-* convergence means if ${A=\{N_1,N_2,...\}}$ then $\Leb(f_A^{-1}(\{0\}) \cap S^c)=\Leb(S^c)$.
\end{proof}
\begin{rem} The proposition hold for Ahlfors regular spaces as well.
\end{rem}
\begin{Bob} Given a sequence $\bf{x}$ in $(X,\mu)$ an $\omega$ Ahlfors regular space we say $\{y_1,y_2,...\}$ is an $l^p$ perturbation of $\bar{x}$ if $\underset{i =1}{\overset{\infty}{\sum}} d(x_i,y_i)^p$ converges.
\end{Bob}
\begin{prop} If $\bf{x}$ is a Borel-Cantelli sequence in $(X,\mu ,d)$ an $\omega$ Ahlfors regular space then any $l^p$ perturbation for $p \leq \frac 1 {\omega}$ is also a Borel-Cantelli sequence.
\end{prop}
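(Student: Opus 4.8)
The plan is to show that perturbing a Borel-Cantelli sequence $\mathbf{x}$ by a sequence $\mathbf{y}$ with $\sum_i d(x_i,y_i)^p < \infty$ (where $p \leq \tfrac{1}{\omega}$) cannot destroy the BC property, and the natural tool is the necessary-and-sufficient condition of Theorem~\ref{nec suff gen}. Since $\mathbf{x}$ is BC, we know that $f_A^{-1}(0)$ has measure zero for \emph{every} increasing sequence $A$; I want to deduce the same for $\mathbf{y}$. Equivalently, I want to compare, for each fixed $N$, the sets $\underset{i=1}{\overset{N}{\cup}} B(x_i,(\tfrac 1 N)^{1/\omega})$ and $\underset{i=1}{\overset{N}{\cup}} B(y_i,(\tfrac 1 N)^{1/\omega})$ and argue that the perturbation only moves a negligible fraction of the balls by a negligible amount when measured against the relevant radius scale $(\tfrac 1 N)^{1/\omega}$.

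The key quantitative observation is the following. Fix $N$ and the radius $\rho_N = (\tfrac 1 N)^{1/\omega}$. An index $i \leq N$ is called \emph{bad} if $d(x_i,y_i) \geq \tfrac 12 \rho_N = \tfrac 12 (\tfrac 1 N)^{1/\omega}$; for all other ($\emph{good}$) indices, $B(y_i,\rho_N) \subset B(x_i, 2\rho_N)$ and conversely $B(x_i,\rho_N)\subset B(y_i,2\rho_N)$. For the bad indices I estimate their count: if there are $k$ bad indices among $i\leq N$, then $\sum_{i\le N} d(x_i,y_i)^p \geq k \cdot (\tfrac12)^p (\tfrac 1 N)^{p/\omega}$, so $k \leq 2^p N^{p/\omega}\sum_i d(x_i,y_i)^p$. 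Because $p \leq \tfrac 1\omega$ forces $p/\omega \leq \tfrac{1}{\omega^2}$... more importantly, what I actually need is that the total \emph{measure} contributed by the bad balls is negligible: each bad ball $B(y_i,\rho_N)$ has measure at most $C\rho_N^\omega = C/N$, so the bad balls contribute at most $k\cdot C/N \leq C 2^p N^{p/\omega - 1}\sum_i d(x_i,y_i)^p$. Here the exponent $p/\omega - 1 \leq \tfrac{1}{\omega^2}-1 < 0$ when... I must be careful: the cleanest sufficient condition is $p/\omega \le 1$, i.e. $p \le \omega$, which is implied by $p \le \tfrac 1\omega$ only when $\omega \ge 1$; in general the hypothesis $p\le\tfrac1\omega$ gives $N^{p/\omega-1}\to 0$ since $p/\omega \le \tfrac{1}{\omega^2}$ and this is $<1$ unless $\omega<1$, so the bad balls' total measure $\to 0$ as $N\to\infty$. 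Thus the contribution of the bad indices vanishes in the limit $N\to\infty$.

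With the bad indices controlled, I compare the good unions at the cost of a bounded dilation of radius (from $\rho_N$ to $2\rho_N$), which changes measures only by the Ahlfors constant factor $C^2 2^\omega$. The upshot is a two-sided containment, up to a set of measure tending to $0$,
\[
\mu\big(\underset{i=1}{\overset{N}{\cup}} B(y_i,\rho_N) \cap B(z,r)\big) \leq \mu\big(\underset{i=1}{\overset{N}{\cup}} B(x_i,2\rho_N)\cap B(z,r)\big) + o(1),
\]
and symmetrically. Feeding this into the definition of $f_A$ shows that $f_A^{(\mathbf y)}(z) = 0$ on a positive-measure set would force $f_{A'}^{(\mathbf x)}(z)=0$ on a positive-measure set for a comparable sequence $A'$ (obtained by passing to the radius scale $2\rho_N$, which only rescales $N$ by a bounded factor and hence gives another valid increasing test sequence). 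Since $\mathbf x$ is BC, Theorem~\ref{nec suff gen} rules this out, so $\mathbf y$ is BC as well.

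I expect the main obstacle to be making the \emph{uniform in $z$ and $r$} comparison of the two liminf-limsup expressions rigorous: the bad balls must be shown to have vanishing measure not just globally but intersected with an arbitrary small ball $B(z,r)$, and one must confirm that the doubling of radius $\rho_N \mapsto 2\rho_N$ can be absorbed by reindexing $N$ into a legitimate increasing sequence $A'$ without changing which points of $X$ lie in $f^{-1}(0)$. Handling the precise interplay of the exponent $p \leq \tfrac 1\omega$ with the scale $\rho_N^\omega = 1/N$ — in particular verifying $N^{p/\omega}\cdot \tfrac 1N \to 0$ and that this domination survives localization to $B(z,r)$ via the Lebesgue density theorem (Theorem~\ref{density}) — is where the real work lies; everything else is a routine application of the Ahlfors-regularity constant and the covering machinery already developed.
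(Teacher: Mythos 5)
Your overall strategy --- pass to the contrapositive, invoke the characterization of Theorem~\ref{nec suff gen}, and transfer the set where $f_A$ vanishes from one sequence to the other by comparing $\underset{i=1}{\overset{N}{\cup}}B(x_i,\rho_N)$ with $\underset{i=1}{\overset{N}{\cup}}B(y_i,\rho_N)$, where $\rho_N=(\tfrac1N)^{1/\omega}$ --- is exactly the paper's; the paper's own proof is a two-line sketch asserting that ``the same $A$ and $S$'' work, using the symmetry of the perturbation relation. The reindexing worry at the end of your writeup is not the real obstacle: you never need to change $A$, since for a fixed scale $\rho_N$ a maximal-separated-subfamily (doubling) argument gives $\mu\big(\underset{i}{\cup}B(x_i,2\rho_N)\cap B(z,r)\big)\le C'\mu\big(\underset{i}{\cup}B(x_i,\rho_N)\cap B(z,r+3\rho_N)\big)$ with $C'$ depending only on the Ahlfors constants, and $B(z,r+3\rho_N)\subset B(z,2r)$ once $N$ is large, which is absorbed by the $\liminf_{r\to0^+}$ in the definition of $f_A$.

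The genuine gap is the one you flagged and then talked yourself past. Your count of bad indices gives total bad-ball measure at most $C2^pN^{p/\omega-1}\sum_i d(x_i,y_i)^p$, which tends to $0$ exactly when $p<\omega$ (and at the endpoint $p=\omega$ after replacing $\sum_i d(x_i,y_i)^p$ by the tail $\sum_{i>N_0}d(x_i,y_i)^p<\epsilon$). The hypothesis $p\le\tfrac1\omega$ implies $p\le\omega$ only when $\omega\ge1$. When $\omega<1$ and $p=\tfrac1\omega$ the good/bad decomposition collapses entirely: taking $d(x_i,y_i)=i^{-1/\omega}$ gives $\sum_i d(x_i,y_i)^{1/\omega}=\sum_i i^{-1/\omega^2}<\infty$, yet \emph{every} index $i\le N$ is bad at scale $\rho_N$, so your measure estimate for the bad balls is vacuous. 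The sentence ``this is $<1$ unless $\omega<1$, so the bad balls' total measure $\to0$'' is a non sequitur. As written, your argument establishes the proposition only for $p\le\omega$ --- in particular for the unit interval, where $\omega=1$ and the two exponent conditions coincide --- and to handle $\omega<1$ one needs a different mechanism (for instance a packing argument showing that displacements comparable to $\rho_i$ can merge only boundedly many $\rho_N$-separated points, rather than a count-times-measure bound), or else the exponent in the statement must be read as $p\le\omega$. The paper's one-line proof offers no help on this point, so the burden of closing it falls on you.
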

\begin{proof} If $\bf{x}$ is not Borel-Cantelli then there exist $A \subset \mathbb{N}$ and a measurable set $S$  with $\mu(S)>1$ and $f_A(S)=0$. Let $\bf {y}$ be an $l^{\frac 1 {\omega}}$ perturbation of $\bf x$. The same $A$ and $S$ show that $\bf{y}$ is not Borel-Cantelli.
\end{proof}
\section{Acknowledgments}

The authors would like to thank T. DePauw, R. Hardt, D. Kleinbock, H. Kr\"{u}ger, R. Ryham and S. Semmes for helpful conversations. We would like to thank V.~Beresnevich for helpful comments on an earlier version of the paper.

\end{document}